\newtheorem{theorem}{Theorem}[section]
\newtheorem{nonumbtheorem}{Theorem}
\newtheorem{lemma}[theorem]{Lemma}
\newtheorem{proposition}[theorem]{Proposition}
\theoremstyle{definition}
\newtheorem{definition}[theorem]{Definition}
\theoremstyle{remark}
\newtheorem{remark}[theorem]{Remark}
\newcommand{\R}{\mathbb{R}}
\newcommand{\emdash}{\hspace{1pt}---\hspace{1pt}}
\newcommand{\bigoh}{\mathcal{O}}
\newcommand{\be}{\begin{equation}}
\newcommand{\ee}{\end{equation}}
\newcommand{\bea}{\begin{align}}
\newcommand{\eea}{\end{align}}
\newcommand{\bead}{\begin{aligned}}
\newcommand{\eead}{\end{aligned}}
\newcommand{\bsub}{\begin{subequations}}
\newcommand{\esub}{\end{subequations}}
\DeclareMathOperator{\Div}{\mathrm{div}}
\newcommand{\ce}{\colonequals}
\newcommand{\ud}[1]{\mathop{\mathrm{d}#1}}
\newcommand{\pd}[2]{\frac{\partial #1}{\partial #2}}
\newcommand{\pdd}[2]{\frac{\partial^2 #1}{\partial #2^2}}
\newcommand{\eps}{\varepsilon}
\numberwithin{equation}{section}
\begin{document}
\title{Analysis of a one-dimensional prescribed mean curvature equation with singular nonlinearity}
\author{Nicholas D. Brubaker\footnotemark[1]\ \footnotemark[2] \and John A. Pelesko\footnotemark[1]\ \footnotemark[3]}

\maketitle

\renewcommand{\thefootnote}{\fnsymbol{footnote}}

\footnotetext[1]{Department of Mathematical Sciences, University of Delaware, Newark, DE 19716, USA (Email addresses: {\tt brubaker@math.udel.edu}, {\tt pelesko@math.udel.edu})}
\footnotetext[2]{The work of this author was supported by the National Science Foundation through a Graduate Research Fellowship.}
\footnotetext[3]{The work of this author was supported by the National Science Foundation, Award No. 312154.}

\maketitle

\begin{abstract}
In this paper, the classical  solution set $(\lambda,u)$ of the one-dimensional prescribed mean curvature equation
\be\tag{$\star$}\label{eq:abstract}
\begin{aligned}
 & -\left(\frac{u'}{\sqrt{1+(u')^2}} \right)' = \frac{\lambda}{(1-u)^2},  \ \   -L<x<L;  \qquad   u(-L)=u(L)= 0,
\end{aligned}
\ee
for  $\lambda>0$ and $L>0$, is analyzed via a time map. It is shown that the solution set depends on both parameters $\lambda$ and $L$ and undergoes two bifurcations. The first is a standard saddle node bifurcation, which happens for all $L$ at $\lambda = \lambda^*(L)$. The second is a \emph{splitting} bifurcation, namely, there exists a value $L^*$ such that as $L$ transitions from greater than or equal to $L^*$  to less than $L^*$ the upper branch of the bifurcation diagram of problem \eqref{eq:abstract} splits into two parts. In contrast, the solution set of the semilinear version of problem \eqref{eq:abstract} is independent of $L$ and exhibits only a saddle node bifurcation. Therefore, as this analysis suggests, the splitting bifurcation is a byproduct of the mean curvature operator coupled with the singular forcing.
\end{abstract}

\begin{keywords} 
Prescribed mean curvature, Splitting bifurcation, Nonlinear forcing, Time map, MEMS
\end{keywords}

\begin{AMS}
35J93, 34B18, 34C23, 34B09
\end{AMS}

\pagestyle{myheadings}
\thispagestyle{plain}
\markboth{NICHOLAS D. BRUBAKER AND JOHN A. PELESKO}{ANALYSIS OF A ONE-DIMENSIONAL PRESCRIBED MEAN CURVATURE EQUATION}

\section{Introduction} 

The  study of nonparametric surfaces of prescribed mean curvature, i.e., the study of solutions of
 \be\label{eq:pmc_pde}
\begin{aligned}
 & -\Div \frac{\nabla u}{\sqrt{1+ |\nabla u|^2}} =\lambda f(u),  \ \   x \in \Omega;  \quad   u= 0, \ \   x\in \partial \Omega,
\end{aligned}
\ee
goes back to 1805 and 1806 when Thomas Young~\cite{young1805essay} and Pierre-Simon Laplace~\cite{laplace1805traite} separately looked at the properties of capillary surfaces, namely, where $f$ is linear. Later the theory, which still focused on capillary surfaces, was put on a solid mathematical foundation by Gauss~\cite{gauss1830principia} and attracted the attention of many nineteenth century scientific luminaries. During the first half of the twentieth century, the problem fell out of vogue, taking a backseat to the study of constant mean curvature surfaces (c.f.~\cite{osserman2002survey}); however in the past few decades, due to mathematical advances and the miniaturization of technology, the study of  problem \eqref{eq:pmc_pde} has come back into focus. Most modern authors have focused on the existence, nonexistence and multiplicity of positive solutions.
The case where $\Omega \subseteq \R^n$, for $n\geq 2$, has been studied by numerous authors: Concus and Finn~\cite{concus1970class,concus1974capillaryI,concus1974capillaryII,concus1975singularI,concus1975singularII,concus1976height,concus1978shape}; Giusti~\cite{giusti1976boundary,giusti1978equation,giusti1980generalized,giusti1981equilibrium}; Gilbarg and Trudinger~\cite{gilbarg1983elliptic}; Ni and Serrin~\cite{ni1985nonexistence,ni1986existence,ni1986nonexistence}; Finn~\cite{finn1986equilibrium,finn2002eight} (and the references therein); Peletier and Serrin~\cite{peletier1987ground}; Atkinson, Peletier and Serrin~\cite{atkinson1988ground,atkinson1989bounds,atkinson1992estimates}; Serrin~\cite{serrin1988positive}; Ishimura~\cite{ishimura1990nonlinear,ishimura1991generalized}; Kusano and Swanson~\cite{kusano1990radial}; Nakao~\cite{nakao1990bifurcation}; Noussair, Swanson and Jianfu~\cite{noussair1993barrier}; Bidaut-Veron~\cite{bidaut1996rotationally}; Cl{\'e}ment, Man{\'a}sevich and Mitidieri~\cite{clement1996modified}; Coffman and Ziemer~\cite{coffman1991prescribed}; Conti and Gazzola~\cite{conti2002existence}; Amster and Mariani~\cite{amster2003nonlinear}; Habets and Omari~\cite{habets2004positive}; Le~\cite{le2005some,le2008sub}; Chang and Zhang~\cite{chang2006multiple}; del Pino and Guerra~\cite{delpino2007ground}; Moulton and Pelesko~\cite{moulton2008theory,moulton2009catenoid}; Bereanu, Jebelean and Mawhin~\cite{bereanu2009radial,bereanu2010radial}; Obersnel and Omari~\cite{obersnel2009existence,obersnel2010positive}; Brubaker and Pelesko~\cite{brubaker2011nonlinear}; Brubaker and Lindsay~\cite{brubaker2011analysissingular}. Also, the case where $n=1$ has been studied by numerous authors in a recent series of papers:  Kusahara and Usami~\cite{kusahara2000barrier}; Benevieri, do \`{O} and de Medeiros~\cite{benevieri2006periodic,benevieri2007periodic}; Bonheure, Habets, Obersnel and Omari~\cite{bonheure2007classical,bonheure2007classical2}; Habets and Omari~\cite{habets2007multiple};  Obersnel~\cite{obersnel2007classical}; Pan~\cite{pan2009onedimensional}; Li and  Liu~\cite{li2010exact}; Burns and Grinfeld~\cite{burns2011steady};  Pan and Xing~\cite{pan2011timemapI,pan2011timemapII}. One of the fascinating aspects of problem \eqref{eq:pmc_pde} that these studies have revealed is the \emph{disappearing} solution behavior shown to be present for numerous choices of $f$ (cf.~\cite{pan2011timemapI},~\cite{pan2011timemapII} and the references therein). For example, Pan showed in~\cite{pan2009onedimensional} that the solution set of
\be\label{eq:1d_pmc_gb_eq}
\begin{aligned}
 & -\left(\frac{u'}{\sqrt{1+(u')^2}} \right)' =\lambda e^{u},  \ \   0<x<L;  \quad   u(0)=u(L)= 0,
\end{aligned}
\ee
which is a quasilinear analogue of the semilinear Gelfand--Bratu equation, is fully characterized by the following theorem for $\lambda >0$ (see Figure \ref{fig:BDs_bratu} for the resulting bifurcation diagram). 

\begin{nonumbtheorem}[Pan~\cite{pan2009onedimensional}]
Assume that $\lambda >0$.  
\begin{enumerate}[\normalfont(i) ]
  \item If $L < \pi$, there exists constants $\lambda_*$ and $\lambda^*$ such that \begin{inparaenum}[\normalfont(a)] \item for any $\lambda \in (0,\lambda_*)\cup \{\lambda^*\}$ there is a unique positive solution of \eqref{eq:1d_pmc_gb_eq}; \item for any $\lambda \in [\lambda_*,\lambda^*)$, there exists exactly two positive solutions of  \eqref{eq:1d_pmc_gb_eq}; \item for any $\lambda \in (\lambda^*,\infty)$, no positive solutions of  \eqref{eq:1d_pmc_gb_eq} exist. \end{inparaenum}
  
  \item If $L \geq \pi$, there exists a constant $\lambda^*$ such that \begin{inparaenum}[\normalfont(a)] \item for any $\lambda \in (0,\lambda^*)$ there exists exactly two positive solutions of  \eqref{eq:1d_pmc_gb_eq}; \item for  $\lambda = \lambda^*$, there is a unique positive solution of \eqref{eq:1d_pmc_gb_eq}; \item for any $\lambda \in (\lambda^*,\infty)$, no positive solutions of  \eqref{eq:1d_pmc_gb_eq} exist. \end{inparaenum}
 \end{enumerate}
\end{nonumbtheorem}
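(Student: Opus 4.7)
My plan is a classical time-map (quadrature) analysis. First, any positive solution of \eqref{eq:1d_pmc_gb_eq} is symmetric about $L/2$ and strictly unimodal, since the equation forces $x \mapsto u'(x)/\sqrt{1+u'(x)^2}$ to be strictly decreasing. Setting $\alpha := \|u\|_\infty = u(L/2)$ and multiplying the ODE by $u'$ yields the first integral
\[\frac{1}{\sqrt{1+(u')^2}} = 1 - \lambda(e^\alpha - e^u).\]
A classical ($C^1$) solution requires the right-hand side to be positive at $u=0$, i.e.\ $\beta := \lambda(e^\alpha - 1) < 1$; this is the corner condition responsible for the disappearing-solution phenomenon. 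Separating variables on $[0,L/2]$ produces the time map
\[T(\alpha,\lambda) := 2\int_0^\alpha \frac{1-\lambda(e^\alpha-e^u)}{\sqrt{\lambda(e^\alpha-e^u)\bigl[2-\lambda(e^\alpha-e^u)\bigr]}}\,du,\]
and positive solutions correspond bijectively to pairs $(\alpha,\lambda)$ with $\alpha>0$, $0<\beta<1$, and $T(\alpha,\lambda)=L$.

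Next I would analyze $T$ on the boundary of the admissible region. For each fixed $\alpha>0$, $T\to\infty$ as $\lambda\to 0^+$ and $T\to T_c(\alpha)$ as $\beta\to 1^-$, where after simplification
\[T_c(\alpha) = 2\int_0^\alpha \frac{e^u-1}{\sqrt{(e^\alpha-e^u)(e^\alpha+e^u-2)}}\,du.\]
The crucial computation is the asymptotic $T_c(\alpha)\to\pi$ as $\alpha\to\infty$: after the substitution $u=\alpha-v$ the integrand is dominated by and converges pointwise to $e^{-v}/\sqrt{1-e^{-2v}}$, whose integral over $(0,\infty)$ equals $\pi/2$ (via $w=e^{-v}$). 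Combined with $T_c(0^+) = 0$ and the strict monotonicity of $T_c$ in $\alpha$ (itself a direct check), this yields the dichotomy: if $L\geq\pi$ every $\alpha>0$ is admissible, whereas if $L<\pi$ the admissible amplitudes form a bounded interval $(0,\alpha_L)$ with $T_c(\alpha_L)=L$.

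On the admissible set I would next show that $\lambda\mapsto T(\alpha,\lambda)$ is strictly decreasing for each fixed $\alpha$, so that $T(\alpha,\lambda)=L$ implicitly defines a single-valued curve $\lambda=\lambda(\alpha;L)$. The linearization at $u\equiv 0$ forces $\lambda(\alpha;L) \to 0$ as $\alpha\to 0^+$. At the other end: if $L\geq\pi$, the constraint $\beta<1$ and $e^\alpha\to\infty$ force $\lambda(\alpha;L)\to 0$ as $\alpha\to\infty$, while if $L<\pi$ one has $\lambda(\alpha_L^-;L) = \lambda_* := 1/(e^{\alpha_L}-1)>0$. A final unimodality claim --- that $\alpha\mapsto \lambda(\alpha;L)$ attains a unique interior maximum $\lambda^*$ --- then produces the two cases of Pan's theorem by counting the preimages of each horizontal line in the $(\lambda,\alpha)$-plane.

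The main technical obstacle is the monotonicity/unimodality statements for $T(\alpha,\cdot)$ and $\lambda(\cdot;L)$: direct differentiation under the integral produces expressions whose sign is not manifest, and a clean argument typically requires first normalizing by a substitution such as $s=(e^\alpha-e^u)/(e^\alpha-1)$, then isolating a sign-definite geometric factor, and finally controlling the remaining piece via a Cauchy--Schwarz or rearrangement-type estimate. The conceptual highlight, however, is the explicit limit $T_c(\infty)=\pi$, which is the structural source of the threshold $L=\pi$ and of the qualitative change in the upper branch.
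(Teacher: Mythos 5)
Note first that the paper does not prove this theorem: it is quoted and attributed to Pan~\cite{pan2009onedimensional} as background motivation, and the time-map machinery of Sections~2--3 is developed only for the singular forcing $(1-u)^{-2}$. So there is no in-paper proof to compare against; one can only compare to the analogous method. Your quadrature set-up is the standard one and all the computations you actually carry out are correct: the first integral, the time map $T(\alpha,\lambda)$, the corner condition $\beta=\lambda(e^\alpha-1)$, and the key limit $T_c(\alpha)\to\pi$. Indeed the substitution $t = 1+(e^\alpha-1)s$ turns $T_c$ into $2\int_0^1 \frac{(e^\alpha-1)s}{(1+(e^\alpha-1)s)\sqrt{1-s^2}}\,ds$, from which both its strict monotonicity in $\alpha$ and the limit $\pi$ drop out at a glance, and the monotonicity of $T(\alpha,\cdot)$ in $\lambda$ follows because $p\mapsto(1-p)/\sqrt{p(2-p)}$ is strictly decreasing.

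Two genuine gaps remain. The first is the regularity class: by imposing the \emph{strict} inequality $\beta<1$ you exclude the borderline amplitude $\alpha_L$, so the decreasing branch of $\lambda(\alpha;L)$ approaches $\lambda_*$ from above without ever attaining it; your picture then yields exactly one preimage at $\lambda=\lambda_*$, whereas Pan's statement asserts exactly two on $[\lambda_*,\lambda^*)$. The $\beta=1$ amplitude does give a $C^2(0,L)\cap C[0,L]$ solution (the derivative diverges at the endpoints but the flux $u'/\sqrt{1+(u')^2}$ stays bounded, and the integral $T_c(\alpha_L)$ converges), so you must admit $\beta\le1$—compare the closed interval $\alpha\in(0,1/(1+\lambda)]$ used in this paper for the singular problem. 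The second, and larger, gap is that the entire ``exactly one/exactly two'' count hangs on the unimodality of $\alpha\mapsto\lambda(\alpha;L)$, which you explicitly flag as ``the main technical obstacle'' but do not prove; without a sign-definite argument (e.g.\ strict concavity of $\alpha\mapsto T(\alpha,\lambda)$ at fixed $\lambda$, which is exactly what Proposition~\ref{prop:onecritpt} delivers for the paper's own problem via $\partial^2 K/\partial\alpha^2<0$) you only get ``at least,'' not ``exactly.'' Finally, a small misstatement: your claimed majorant $e^{-v}/\sqrt{1-e^{-2v}}$ does not in fact dominate the integrand, because for finite $\alpha$ the denominator factor $1+e^{-v}-2e^{-\alpha}$ is smaller than $1+e^{-v}$, making the integrand larger; a correct dominating function such as $e^{-v}/\sqrt{(1-2/e)(1-e^{-v})}$ for $\alpha\ge1$ is needed, though this is easily repaired.
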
 

\begin{figure}[!h]
 \centering
 \subfigure[$L < \pi$]{\label{fig:Llpi}\includegraphics[width=0.45\textwidth]{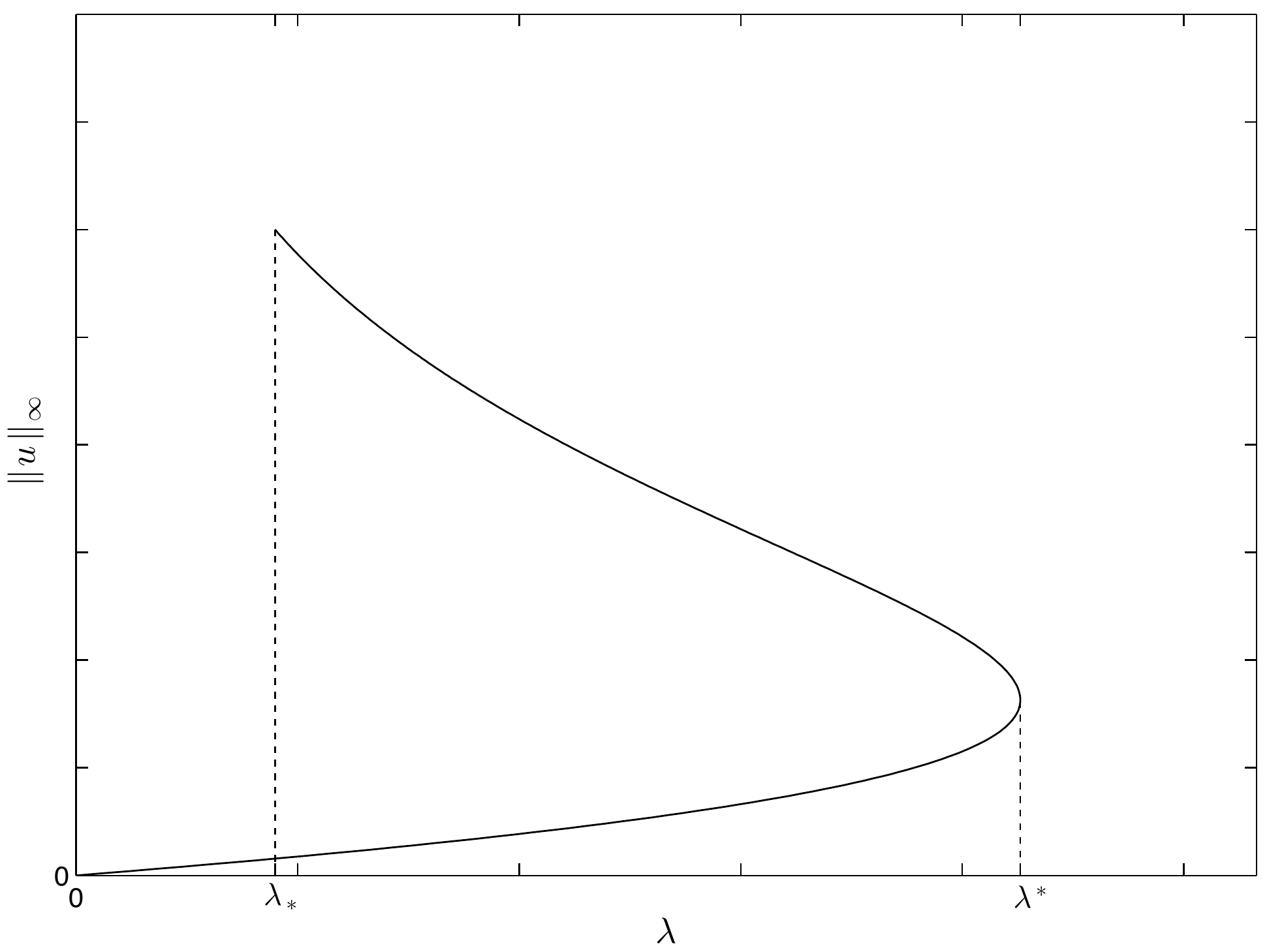}} \qquad
 \subfigure[$L \geq \pi^*$]{\label{fig:Lgeqpi}\includegraphics[width=0.45\textwidth]{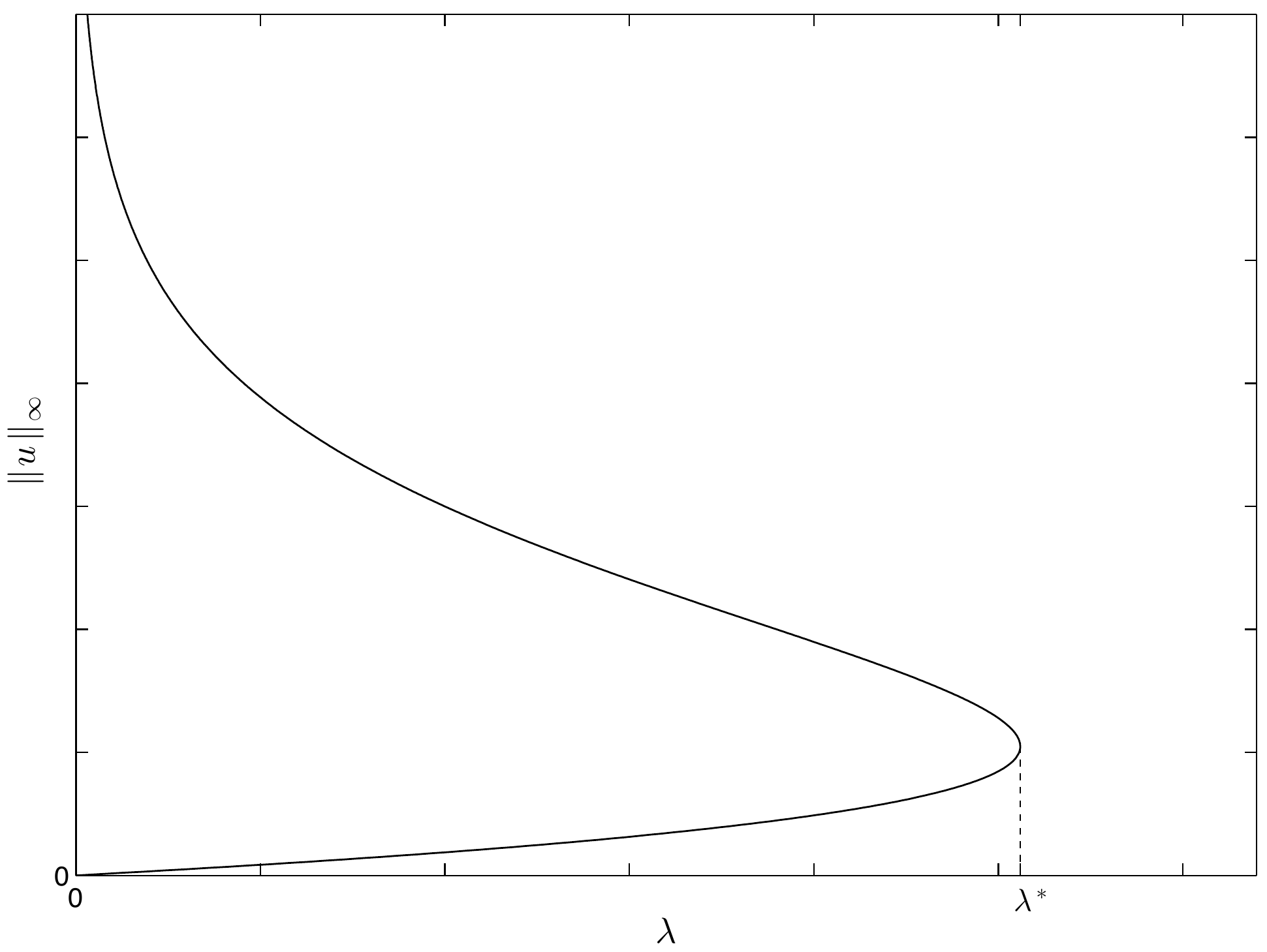}}
 \caption{Bifurcation Diagrams of \eqref{eq:1d_pmc_gb_eq} for $L< \pi$ and $L \geq \pi$. Note that in \subref{fig:Lgeqpi}, the bifurcation diagram continues; in particular, $\lambda \to 0^+$ as $\|u\|_\infty \to \infty$~\cite{pan2009onedimensional}. }\label{fig:BDs_bratu}
\end{figure}
In this paper we continue to study the disappearing solution behavior of prescribed mean curvature problem \eqref{eq:pmc_pde}, with $n=1$,  by considering the case where $f$ is an inverse square nonlinearity that is singular at $u = 1$. Namely, we study the solution set of
\be\label{eq:ode}
\begin{aligned}
 & -\left(\frac{u'}{\sqrt{1+(u')^2}} \right)' = \frac{\lambda}{(1-u)^2}, \quad   -L<x<L; \qquad   u(-L)=u(L)= 0,
\end{aligned}
\ee
with $u <1$ in $[-L,L]$, for positive $\lambda$ and $L$. Previously, these types of equations (i.e., one-dimensional singular prescribed mean curvature equations) have been studied in a general context by Bonheure et al. in \cite{bonheure2007classical}. In particular, applying their results to problem \eqref{eq:ode} gives the following: 
\begin{itemize}
 \item if $\lambda>0$ is sufficiently small, then there exists at least two classical solutions of  problem \eqref{eq:ode} \cite[Theorems 3.1 and 5.1]{bonheure2007classical};
 \item there exists a $\lambda^*>0$ such that for all $\lambda>\lambda^*$, no (classical or non-classical) positive solutions of  problem \eqref{eq:ode} exist \cite[Theorem 6.1]{bonheure2007classical}.
\end{itemize}
To build on their work, we use the method of time maps to give a description of the exact number of classical solutions, i.e., $C^2(-L,L)\cap C[-L,L]$ solutions, when the parameters $\lambda$ and $L$ vary. In doing so, we prove the following theorem that fully characterizes the solution set of $\eqref{eq:ode}$.
\begin{theorem}\label{thm:maintheorem}
 Let $L>0$  and $L^*$ be defined by $L^* \colonequals \max_{\lambda>0}g(\lambda)$, where 
\begin{equation}\label{eq:Ranalytic_thm}
 g(\lambda) = 
 \begin{cases}
	\displaystyle\frac{\lambda}{(1-\lambda^2)^{3/2}}\left(\log \frac{1+\sqrt{1-\lambda^2}}{\lambda} - \sqrt{1-\lambda^2} \right),&\mbox{for } \lambda \in (0,1), \\
	1/3, & \mbox{for } \lambda=1,\\
	 \displaystyle\frac{\lambda}{(\lambda^2-1)^{3/2}}\left(\sqrt{\lambda^2-1} - \sec^{-1}(\lambda)\right),&\mbox{for } \lambda \in (1,\infty).
	\end{cases}
\end{equation} 
\begin{enumerate}[\normalfont(i) ]
 \item\label{mthm:case2} If $L < L^*$, then there exists three values $\lambda_*$, $\lambda_{**}$ and  $\lambda^*$, which depend on $L$, such that
  \begin{enumerate}[\normalfont(a) ]
   \item for $\lambda \in (0,\lambda_*] \cup [\lambda_{**},\lambda^*)$, \eqref{eq:ode} has exactly two positive solutions;
   \item for $ \lambda \in (\lambda_*,\lambda_{**})\cup\{\lambda^*\}$, \eqref{eq:ode} has exactly one positive solution;
   \item for $\lambda > \lambda^*$, \eqref{eq:ode} has no solutions.
  \end{enumerate}
 \item\label{mthm:case1} If $L \geq L^*$, then there exists a value $\lambda^*$, which depend on $L$, such that
  \begin{enumerate}[\normalfont(a) ]
   \item for $\lambda \in (0,\lambda^*)$, \eqref{eq:ode} has exactly two positive solutions;
   \item for $\lambda = \lambda^*$, \eqref{eq:ode} has exactly one positive solution;
   \item for $\lambda > \lambda^*$, \eqref{eq:ode} has no solutions.
  \end{enumerate}
\end{enumerate}
Furthermore, in both cases, $\lambda^* <  \min \left\{ L^{-1}, \pi^2/(27L^2) \right\}.$
\end{theorem}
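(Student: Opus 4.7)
The plan is to reduce \eqref{eq:ode} to a transcendental equation via a time map and analyze the resulting level sets geometrically. By the symmetry of \eqref{eq:ode}, every positive classical solution is even, concave, and achieves its unique maximum $\alpha\ce u(0)\in(0,1)$ at $x=0$ with $u'(0)=0$. Multiplying \eqref{eq:ode} by $u'$ and integrating once produces the first integral
\[
\frac{1}{\sqrt{1+(u')^2}}\;=\;H(u;\alpha,\lambda)\;\ce\;1+\lambda\!\left(\frac{1}{1-u}-\frac{1}{1-\alpha}\right),
\]
valid on $[0,\alpha]$. Since the left-hand side lies in $(0,1]$ and $H(\cdot;\alpha,\lambda)$ is minimized on $[0,\alpha]$ at $u=0$, admissibility forces $\alpha\le 1/(1+\lambda)$. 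Solving for $u'$ and separating variables on $(-L,0)$ then yields $T(\alpha,\lambda)=L$, where
\[
T(\alpha,\lambda)\;=\;\int_0^\alpha\frac{H(u;\alpha,\lambda)}{\sqrt{1-H(u;\alpha,\lambda)^2}}\,du,
\]
defined on $D=\{(\alpha,\lambda):0<\alpha\le 1/(1+\lambda)\}$. Counting positive classical solutions of \eqref{eq:ode} is thus equivalent to counting preimages of $L$ under $T$.

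Three structural facts drive the analysis. First, for each fixed $\alpha$, $T$ is strictly decreasing in $\lambda$: $\partial_\lambda H=1/(1-u)-1/(1-\alpha)\le 0$ and $H\mapsto H/\sqrt{1-H^2}$ has positive derivative $(1-H^2)^{-3/2}$. Second, $T\to\infty$ as $\lambda\to 0^+$ for fixed $\alpha$. Third, on $\partial D\cap\{\alpha=1/(1+\lambda)\}$ the identity $H(u)=\lambda u/(1-u)$ reduces $T$ to an integral which evaluates in closed form by an elementary substitution followed by standard antiderivatives in the three regimes $\lambda<1$, $\lambda=1$, $\lambda>1$, recovering exactly the formula for $g(\lambda)$ in \eqref{eq:Ranalytic_thm}. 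These facts together imply that $\{T=L\}\cap D$ is the graph of a function $\lambda=\Lambda(\alpha,L)$ over $\{\alpha\in[0,1):h(\alpha)\le L\}$, where $h(\alpha)\ce g((1-\alpha)/\alpha)$. A direct analysis of the closed form for $g$ then should show that $g(\lambda)\to 0$ at both endpoints of $(0,\infty)$ and that $g$ has a unique interior maximum $L^*=g(\lambda_c)$, so $h$ is unimodal on $(0,1)$ with peak value $L^*$ at $\alpha_c=1/(1+\lambda_c)$. This yields the splitting dichotomy: for $L\ge L^*$ the domain of $\Lambda(\cdot,L)$ is the single interval $(0,1)$, while for $L<L^*$ it is the disjoint union $(0,\alpha_1]\cup[\alpha_2,1)$, where $\alpha_1<\alpha_c<\alpha_2$ are the two roots of $h(\alpha)=L$.

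To finish, the shape of the bifurcation curve on each component must be determined. For $L\ge L^*$, I would show that $\Lambda(\cdot,L)$ has exactly one interior maximum $\lambda^*$, giving the saddle-node structure of case~\ref{mthm:case1}. For $L<L^*$, I would show that the upper branch on $[\alpha_2,1)$ is strictly monotone (so $\Lambda$ decreases from $\lambda_*\ce(1-\alpha_2)/\alpha_2$ to $0$), while the lower branch on $[0,\alpha_1]$ is strictly unimodal with interior maximum $\lambda^*$, rising from $0$ and then falling to $\lambda_{**}\ce(1-\alpha_1)/\alpha_1$ at $\alpha=\alpha_1$. The ordering $\lambda_*<\lambda_{**}<\lambda^*$ then follows from $\alpha_1<\alpha_2$ and from $\lambda^*$ being attained at an interior point beyond the endpoint of the lower branch; counting the $\alpha$ with $\Lambda(\alpha,L)=\lambda$ recovers the multiplicities in case~\ref{mthm:case2}. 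The crude bound $\lambda^*<L^{-1}$ follows by integrating \eqref{eq:ode} over $(-L,L)$ and using $|u'/\sqrt{1+(u')^2}|<1$ together with $(1-u)^{-2}>1$; the sharper bound $\lambda^*<\pi^2/(27L^2)$ I expect to obtain via comparison with the semilinear time map of $-u''=\lambda/(1-u)^2$, whose explicit saddle-node value scales as $1/L^2$.

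The main obstacle is the shape analysis above: establishing that $\Lambda(\cdot,L)$ has exactly the stated number of critical points on each branch. By implicit differentiation along $\{T=L\}$ this is equivalent to controlling the sign and zeros of $T_\alpha$, which is a delicate integral because $H\to 1$ at the upper limit $u=\alpha$ and the naive Leibniz boundary term is singular. A regularizing substitution such as $u=\alpha v$, followed by careful monotonicity estimates on the transformed integrand, should reveal that the switches of monotonicity of $\Lambda$ occur exactly as claimed; ruling out spurious additional turning points is where a time-map proof of a multiplicity theorem of this type invariably concentrates its effort.
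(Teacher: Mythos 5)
Your reduction to the time map is sound, and the reparametrization of the level set $\{T=L\}$ as a graph $\lambda=\Lambda(\alpha,L)$ over the set $\{h(\alpha)\le L\}$ is a legitimate, slightly more geometric restatement of what the paper does (the paper works instead with the two scalar functions $M(\lambda)=\max_\alpha T(\alpha;\lambda)$ and $g(\lambda)=T(1/(1+\lambda);\lambda)$ and compares $L$ to both). Your structural facts\emdash $T_\lambda<0$ for fixed $\alpha$, $T\to\infty$ as $\lambda\to0^+$, and the boundary trace of $T$ equaling $g$\emdash are all correct and match Lemma~\ref{lemma:llbmt}\eqref{item1:strictlydecreasing}, Proposition~\ref{prop:Mproperties}\eqref{Mprop:item3} and \eqref{eq:timemaprightendvalue}.

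However, there is a genuine gap at the step you yourself flag as ``the main obstacle.'' Everything in the multiplicity count hinges on knowing that, for each fixed $\lambda$, the map $\alpha\mapsto T(\alpha;\lambda)$ has exactly one critical point on $(0,1/(1+\lambda)]$\emdash equivalently, via $\Lambda'=-T_\alpha/T_\lambda$ with $T_\lambda<0$, that your branches of $\Lambda(\cdot,L)$ have exactly the turning-point structure you announce. You do not prove this, and the claim is not obvious: it is precisely where the paper spends its technical effort. The paper's route (Lemma~\ref{lem:differentiable} and Proposition~\ref{prop:onecritpt}) is to justify differentiating twice under the integral sign (dominated-convergence estimates on $\partial_\alpha K$ and $\partial_\alpha^2 K$, dealing exactly with the boundary singularity at $u=\alpha$ you mention) and then to verify by a direct algebraic computation on the cubic numerator $p(z,\alpha;\lambda)=b_0+b_1\lambda$ that $\partial_\alpha^2 K<0$, hence $T''(\alpha;\lambda)<0$. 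A ``regularizing substitution'' plus ``careful monotonicity estimates'' is a plan, not a proof; until the strict concavity of $T$ in $\alpha$ (or an equivalent sign statement on $T_\alpha$ along the level set) is actually established, the solution counts in both cases of the theorem are unsupported.

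A secondary gap is the bound $\lambda^*<\pi^2/(27L^2)$. You propose comparing with the semilinear problem $-u''=\lambda(1-u)^{-2}$. That comparison is not carried out, the direction of the inequality between the two time maps is not argued, and in any case the exact semilinear saddle-node value is a numerical constant times $L^{-2}$, not $\pi^2/(27L^2)$, so even if the comparison held it would not immediately yield the stated constant. The paper instead tests a solution against the first eigenfunction $\varphi_1$ of the weighted operator $-\partial_x\bigl((1+(u')^2)^{-1/2}\partial_x\bigr)$, deduces the solvability condition $\lambda\le \tfrac{4}{27}\mu_1$ from $\max_{u\in(0,1)}u(1-u)^2=\tfrac{4}{27}$, and then bounds $\mu_1<\kappa_1=\pi^2/(4L^2)$ by Rayleigh's formula. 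Your $\lambda^*<L^{-1}$ argument, by contrast, is essentially the paper's and is fine.
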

\begin{remark}
 Using \eqref{eq:Ranalytic_thm}, the value of $L^*$ can easily be approximated: $L^* \approx 0.3499676$.
\end{remark}
The results of this theorem are illustrated in Figure \ref{fig:BDs} for the two separate cases. Consequently, we see that the solutions set of \eqref{eq:ode} contains two bifurcations: 
\begin{enumerate}
  \item a saddle node bifurcation occurs, for all $L$, at the point $(\lambda^*(L),\|u(\cdot \: ;\lambda^*,L)\|_\infty)$, where $u(x;\lambda,L)$ is the solution of \eqref{eq:ode}; 
  \item a \emph{splitting} bifurcation occurs at $L=L^*$. That is, when $L \geq L^*$, the upper solution branch of the bifurcation diagram of \eqref{eq:ode} is continuous (see Figure \ref{fig:Lpt6}); however, when $L < L^*$, the upper solution branch splits into two parts (see Figure \ref{fig:Lpt3}).
\end{enumerate}
\begin{figure}[!h]
 \centering
  \subfigure[$L<L^*$]{\label{fig:ss1}\includegraphics[width=0.45\textwidth]{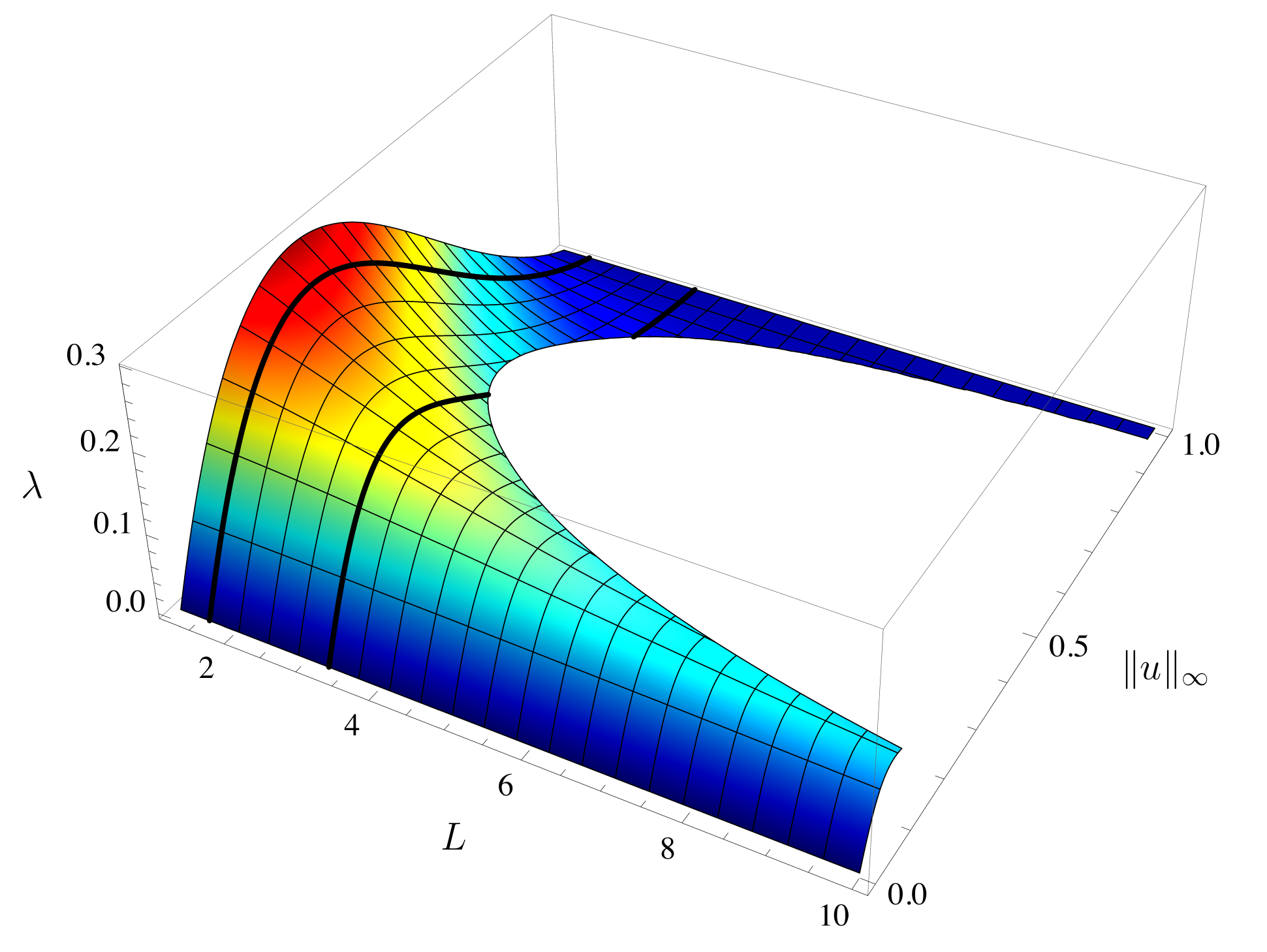}} \qquad
  \subfigure[$L \geq L^*$]{\label{fig:ss2}\includegraphics[width=0.45\textwidth]{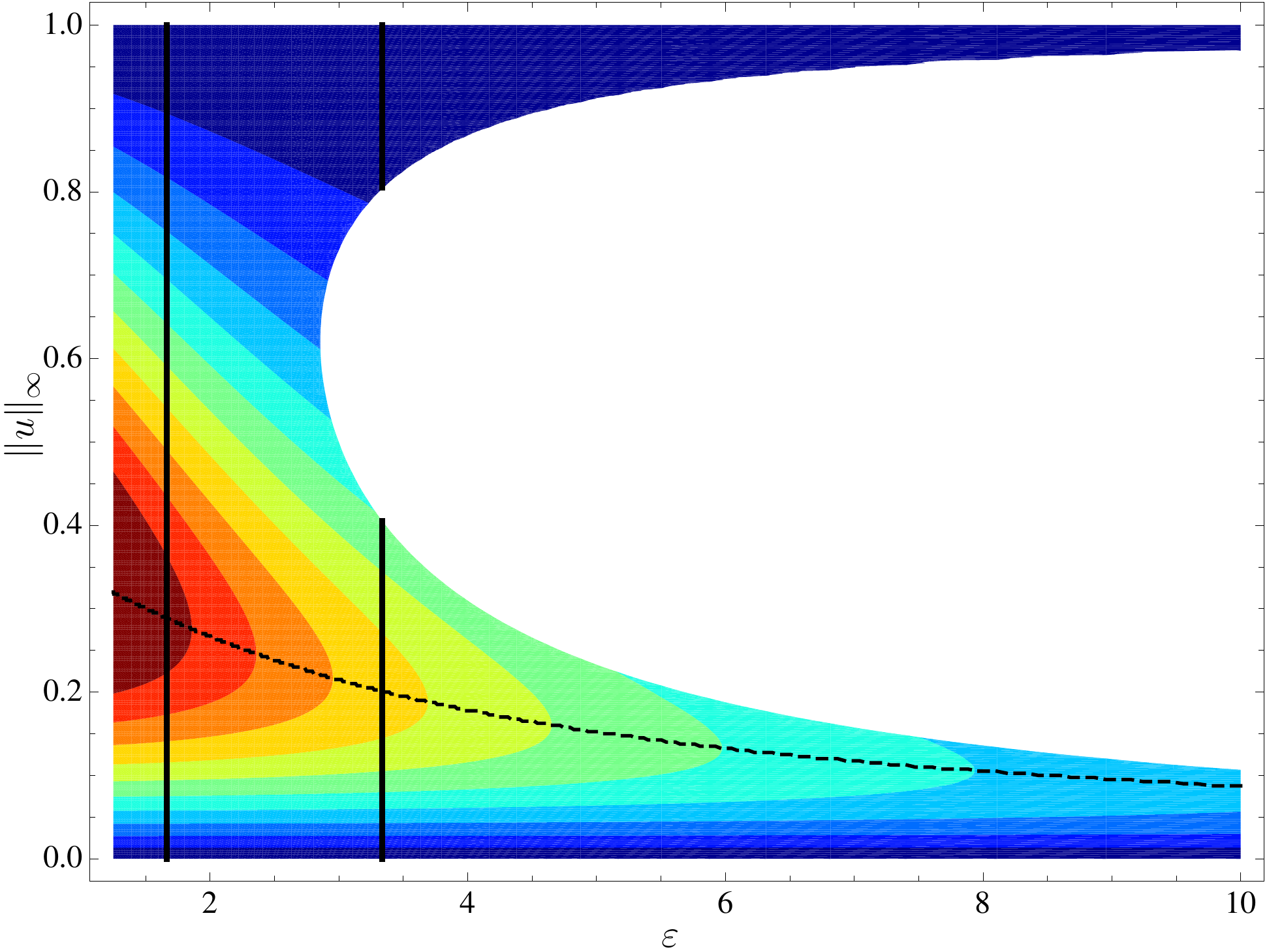}}
  \subfigure[$L<L^*$]{\label{fig:Lpt3}\includegraphics[width=0.45\textwidth]{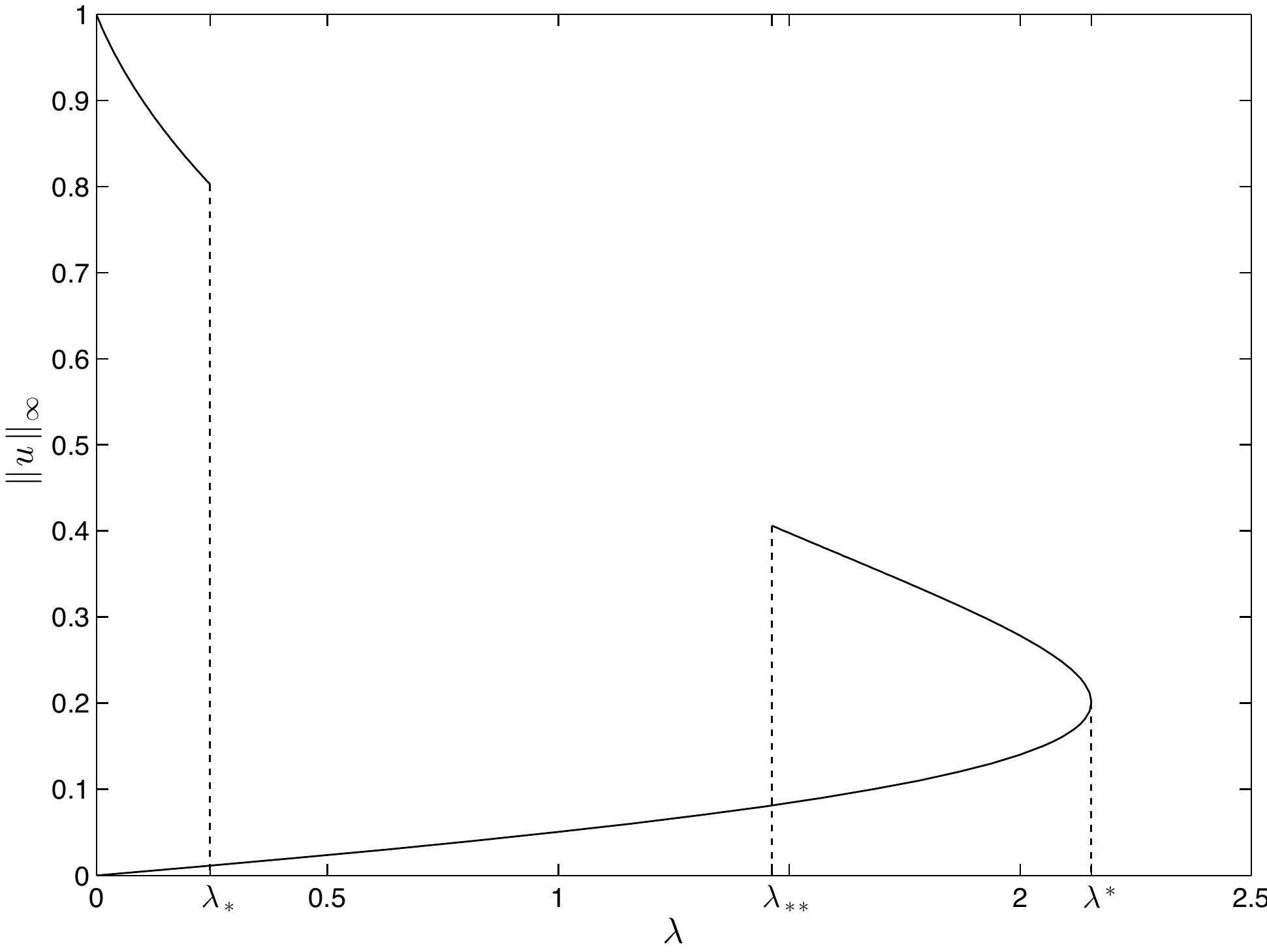}} \qquad
  \subfigure[$L \geq L^*$]{\label{fig:Lpt6}\includegraphics[width=0.45\textwidth]{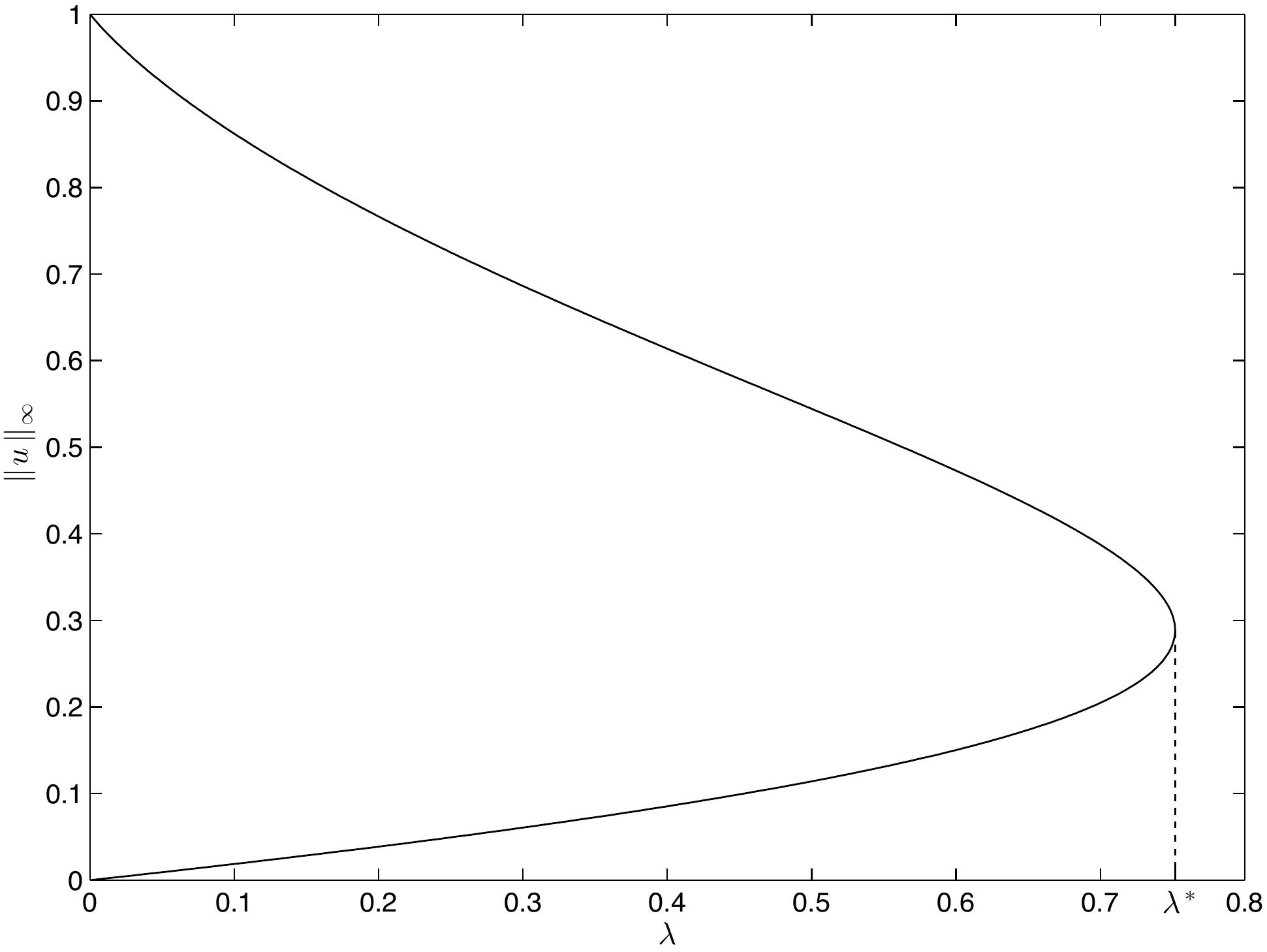}} \\
 \caption{\subref{fig:ss1} Surface where the points on it correspond to the solutions  of \eqref{eq:ode}. Here, the black contours represent solutions for fixed $L$, which yield bifurcation curves that capture the qualitative shape described in Theorem \ref{thm:maintheorem}; namely, for $L=3/10$ and $L=6/10$, which coincide with case \eqref{mthm:case2} and  case \eqref{mthm:case1}, respectively (see Subfigures \subref{fig:Lpt3} and \subref{fig:Lpt6}). \subref{fig:ss2} Contour plot of the surface given in Subfigure \subref{fig:ss2}. The new lines\emdash (thick, dashed), (thick) and (dashed)\emdash are the locations of $\lambda_*(L)$, $\lambda_{**}(L)$ and $\lambda^*(L)$ of Theorem \ref{thm:maintheorem}. \subref{fig:Lpt3} Bifurcation diagram of \eqref{eq:ode} for $L=3/10$; \subref{fig:Lpt6} Bifurcation diagram of \eqref{eq:ode} for $L=6/10$.}\label{fig:BDs}
\end{figure}

\begin{remark}
Equation \eqref{eq:ode} arises in the study of microelectromechanical systems, namely,~\cite{brubaker2011nonlinear} proposed that the steady state deflection, $u$, of an elastic membrane due to an electrostatic forcing satisfies
\[
  -\mathop{\mathrm{div}} \frac{\nabla u}{\sqrt{1+\eps^2|\nabla u|^2}} = \frac{\mu}{(1-u)^2},  \ \   x \in \Omega; \quad u<1, \ \   x \in \Omega; \quad   u= 0, \ \   x\in \partial \Omega,
\]
 where $0<\eps\ll 1$ is an aspect ratio of the device, $\mu$ is a positive ratio of the reference electrostatic force over the reference elastic force and $\Omega$ is a open, connected, bounded domain in $\R^n$, for $n=1,2$. Therefore, when $n=1$ (without loss of generality, we assume that the domain is centered at the origin), the change of variable $y = \eps^{-1} x$ yields \eqref{eq:ode}, where $\lambda \ce \eps^2 \mu$ and $L \ce 1/\eps$.
\end{remark}

\begin{remark} 
 The disappearing solutions behavior exhibited in case \eqref{mthm:case2} of Theorem \ref{thm:maintheorem} is due to the mean curvature operator; specifically, from~\cite{pelesko2003modeling}, we know  that for the related semilinear problem
\be\label{eq:1d_std_mems}
\begin{aligned}
 & -u'' = \frac{\lambda}{(1-u)^2},  \ \   -L<x<L;  \quad   u(-L)=u(L)= 0,
\end{aligned}
\ee
there exists a $\lambda^*>0$ such that for $\lambda \in (0,\lambda^*)$ there are exactly two solutions of \eqref{eq:1d_std_mems}, for $\lambda =\lambda^*$ there is a unique solution of \eqref{eq:1d_std_mems} and for $\lambda>\lambda^*$ there are no solutions of \eqref{eq:1d_std_mems}. So, in other words, the bifurcation diagram of \eqref{eq:1d_std_mems} depends on only a single parameter, which can easily be seen through the change of variable $y = \sqrt{\lambda} x$, and has same qualitative shape as the one shown in Figure \ref{fig:Lpt6}. Hence the disappearance of solutions is a byproduct of the mean curvature operator coupled with the singularity.  
\end{remark}

 We start the next section by proving some necessary conditions for the existence of \emph{classical} solutions of \eqref{eq:ode}. From this, we restrict the parameter space to where solutions may exist and use the inherent symmetry of \eqref{eq:ode} to reduce it to an equivalent problem. Then using these restrictions, in Section \ref{sect:tm}, we introduce and analyze a time map \`{a} la~\cite{pan2009onedimensional} and~\cite{smoller1980nondegenerate}. From this analysis, we then prove Theorem \ref{thm:maintheorem}.
 
\section{Necessary conditions for the existence of solutions}\label{sect:gp}

In this section, we derive some properties about the solutions  of \eqref{eq:ode}. Also, we show that solutions cannot exist under certain conditions.

\begin{lemma}\label{lem:nc1}
 If $u(\cdot \,;\lambda,L)$ is solution of \eqref{eq:ode} for fixed $\lambda>0$ and $L>0$, then $u>0$ in $(-L,L)$; furthermore, $u$ is strictly concave down in this region, which implies that its maximum, $\|u\|_\infty \in (0,1)$, is unique.
\end{lemma}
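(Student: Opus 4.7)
The strategy is to exploit the fact that the prescribed mean curvature operator $\varphi(s) \colonequals s/\sqrt{1+s^2}$ is a strictly increasing diffeomorphism of $\R$ onto $(-1,1)$. Rewriting \eqref{eq:ode} as $(\varphi(u'))' = -\lambda/(1-u)^2$ and using the hypothesis $u<1$ together with $\lambda>0$, the right-hand side is strictly negative on $[-L,L]$, so $\varphi(u')$ is strictly decreasing. Since $\varphi$ is strictly monotone, this immediately transfers to $u'$: the derivative is strictly decreasing on $[-L,L]$, which is precisely strict concavity. (If $u \in C^2(-L,L)$ one may also phrase this as $u'' = -\lambda(1+(u')^2)^{3/2}/(1-u)^2 < 0$, but working directly with $\varphi(u')$ avoids dividing by $\sqrt{1+(u')^2}$.)

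With strict concavity in hand, the positivity claim follows from a standard convex-combination argument: for any interior point $x_0 = \alpha(-L) + (1-\alpha)L$ with $\alpha \in (0,1)$, strict concavity gives
\[
u(x_0) > \alpha\,u(-L) + (1-\alpha)\,u(L) = 0,
\]
so $u>0$ throughout $(-L,L)$. By continuity $u$ attains a positive maximum on the compact interval $[-L,L]$, and since $u$ vanishes on the boundary, this maximum is achieved at some interior point. Uniqueness of the maximizer is again a direct consequence of strict concavity: if $u$ attained its maximum at two distinct points $x_1<x_2$, then for any $x$ strictly between them one would have $u(x) > \tfrac{1}{2}u(x_1) + \tfrac{1}{2}u(x_2) = \|u\|_\infty$, a contradiction. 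Finally $\|u\|_\infty \in (0,1)$ because positivity forces $\|u\|_\infty > 0$ and the standing hypothesis $u<1$ in $[-L,L]$ forces $\|u\|_\infty < 1$.

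There is essentially no obstacle here; the only point that requires mild care is ensuring that the argument does not implicitly divide by $\sqrt{1+(u')^2}$ at points where $u'$ might blow up. This is handled cleanly by keeping the analysis at the level of $\varphi(u')$, whose derivative is continuous and strictly negative on all of $[-L,L]$, so that strict monotonicity of $u'$ is a statement about a continuous function on a closed interval rather than one that involves removable singularities. With that observation in place, the three conclusions (positivity, strict concavity, and a unique maximum in $(0,1)$) follow in the order stated.
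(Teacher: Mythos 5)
Your proposal is correct and follows essentially the same route as the paper: both establish strict concavity of $u$ directly from the equation (the paper by carrying out the differentiation to get $u''<0$, you by noting that $\varphi(u')$ is strictly decreasing and $\varphi$ is a strictly increasing diffeomorphism), and then deduce positivity, the interior location of the maximum, and uniqueness of the maximizer from concavity plus the boundary conditions. Your formulation in terms of $\varphi(u')$ is a harmless refinement of the same idea, and the remaining details you supply are the standard ones the paper leaves implicit.
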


\begin{proof}
From carrying out the differentiation in differential equation of \eqref{eq:ode}, we obtain $u''(x)<0$  for all $x$ in $(-L,L)$ and $\lambda >0$, which gives the desired result.
\end{proof}

Next, it is easily verified that the ordinary differential equation given in \eqref{eq:ode} has the first integral
 \begin{equation}\label{eq:firstintegral1}
 \frac{1}{\sqrt{1+(u')^2}} - \frac{\lambda}{1-u}= E, 
\end{equation}
where $E$ is a conserved quantity of the system and can be determined by the maximum deflection of $u$. That is, let $c$ in $ (-L,L)$ be the unique value  such that $u(c) = \|u\|_\infty$; hence, $u'(c) = 0$ and first integral \eqref{eq:firstintegral1} gives $E = 1 - \lambda(1-\|u\|_\infty)^{-1}$, which implies 
 \begin{equation}\label{eq:firstintegral}
 \frac{1}{\sqrt{1+(u')^2}} - \frac{\lambda}{1-u}= 1 - \frac{\lambda}{1-\|u\|_\infty}.
\end{equation}
Using this we can prove the following necessary condition for classical solutions.

\begin{theorem}\label{eq:restriction_thm}
 Let $u(\cdot \,;\lambda,L)$ be a solution of  \eqref{eq:ode} for $\lambda>0$ and $L>0$. Then $\|u\|_\infty \leq 1/(1+\lambda).$
\end{theorem}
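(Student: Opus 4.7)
The plan is to exploit the first integral \eqref{eq:firstintegral} by evaluating (in a limiting sense) at the boundary point $x=-L$, where the Dirichlet condition $u(-L)=0$ eliminates the $\lambda/(1-u)$ term. Substituting $u=0$ formally into \eqref{eq:firstintegral} gives the relation
\[
\frac{1}{\sqrt{1+(u'(-L))^2}} - \lambda = 1 - \frac{\lambda}{1-\|u\|_\infty},
\]
which rearranges to $\lambda/(1-\|u\|_\infty) = 1+\lambda - (1+(u'(-L))^2)^{-1/2}$. Since the last term lies in $[0,1]$, we get $\lambda/(1-\|u\|_\infty)\leq 1+\lambda$, which is equivalent to the claimed bound $\|u\|_\infty \leq 1/(1+\lambda)$.

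The only subtlety is that a classical solution is only assumed to be $C^2(-L,L)\cap C[-L,L]$, so $u'(-L)$ need not exist as a finite number; one must interpret the above as a one-sided limit. I would justify this using Lemma~\ref{lem:nc1}: since $u$ is strictly concave and vanishes at both endpoints with a unique interior maximum at some $c\in(-L,L)$, the derivative $u'$ is strictly decreasing on $(-L,L)$ and strictly positive on $(-L,c)$. Therefore the monotone limit $\ell \ce \lim_{x\to -L^+} u'(x)$ exists in $(0,+\infty]$, and in particular $\lim_{x\to -L^+} (1+(u'(x))^2)^{-1/2}$ exists and belongs to $[0,1)$.

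With this in hand, the proof reduces to taking $x\to -L^+$ in \eqref{eq:firstintegral}, observing that $\lambda/(1-u(x))\to \lambda$ by continuity of $u$ up to the boundary, and applying the rearrangement above. The main (very mild) obstacle is simply to justify the boundary limit; once concavity from Lemma~\ref{lem:nc1} gives the monotonicity of $u'$, the bound follows by algebra. The borderline case $\|u\|_\infty = 1/(1+\lambda)$ corresponds exactly to $\ell = +\infty$, which is why the inequality is not strict.
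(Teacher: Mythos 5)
Your proof is correct, and it takes a genuinely different route from the paper's. The paper argues by contradiction at an \emph{interior} point: if $\|u\|_\infty > (1+\lambda)^{-1}$, then there is an interior $\hat{x}$ at which the denominator $(1-u)(1-\|u\|_\infty)-\lambda(\|u\|_\infty-u)$ in the formula for $u'^2$ vanishes, forcing $u'(\hat{x})$ to blow up and contradicting $C^2$ regularity in the open interval. You instead take the first integral \eqref{eq:firstintegral} to the \emph{boundary} $x\to -L^+$, where $u\to 0$ kills the singular term, and read off the bound from the constraint $0 \leq (1+(u')^2)^{-1/2} \leq 1$. Your argument is more elementary and somewhat more direct; it also makes transparent why the inequality is non-strict (equality corresponds to $u'(-L^+)=+\infty$). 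One small simplification is available: you do not actually need Lemma~\ref{lem:nc1} to establish existence of the one-sided limit of $(1+(u')^2)^{-1/2}$, since the first integral already expresses that quantity as $1+\lambda/(1-u)-\lambda/(1-\|u\|_\infty)$, which is continuous up to $x=-L$ by continuity of $u$ on $[-L,L]$; the bound then follows from nonnegativity of the limit. The paper's approach, while less direct here, has the virtue of making explicit the positivity of the denominator in the expression for $u'$, which the paper immediately re-uses in \eqref{eq:firstderivative1} via Proposition~\ref{prop:nocomplexderivative}.
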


\begin{proof}
For contradiction assume that $\|u\|_\infty > (1+\lambda)^{-1}$. Hence,  
\be\label{eq:u_sup_bound_neg}
\|u\|_\infty  > \frac{1 - (1 + \lambda)\|u\|_\infty}{1 - \lambda- \|u\|_\infty} >0.
\ee
From Lemma \ref{lem:nc1}, we know that $u$ attains its unique maximum at a value, say $c$, in $(-L,L)$, i.e., $u(c) = \|u\|_\infty > u(x)$ for all $x$ in  $(-L,c)\cup(c,L)$; therefore, inequality \eqref{eq:u_sup_bound_neg} implies that there exists an $\hat{x}$ in $(-L,c)\cup(c,L)$ such that 
\[
u(\hat{x}) = \frac{1 - (1 + \lambda)\|u\|_\infty}{1 - \lambda- \|u\|_\infty}.
\]
Now, from first integral \eqref{eq:firstintegral}, we know 
\[
  u'(x;\lambda,L)^2 =-1 +  \frac{(1-u)^2(1-\|u\|_\infty)^2}{[(1-u)(1-\|u\|_\infty)- \lambda(\|u\|_\infty-u)]^2}, \qquad  x\neq \hat{x}.
\]
Since $u'$ is continuous and bounded in $(-L,L)$, we take $x \to \hat{x}$ so 
\[
  u'(\hat{x};\lambda,L)^2 = - 1 + \lim_{x \to \hat{x}} \frac{(1-u)^2(1-\|u\|_\infty)^2}{[(1-u)(1-\|u\|_\infty)- \lambda(\|u\|_\infty-u)]^2};
\]
however, the limit on the right-hand side diverges to infinity and we have a contradiction.
\end{proof}

 Now, from this restriction, we may also find the value of $x$ where the maximum deflection of $u$ happens. To do so, we first have the following proposition.
\begin{proposition}\label{prop:nocomplexderivative}
  Assume that $\lambda >0$ is fixed, $\alpha \in (0,1/(1+\lambda)]$ and $u(x)$ is a function whose range is contained in $(0,1)$. Then 
\refstepcounter{equation}\label{eq:inequalitylinearinu0}
  \be
   2 (1-u)(1-\alpha) -\lambda(\alpha-u) >0 \qquad \mbox{and} \qquad (1-u)(1-\alpha)-\lambda(\alpha-u) >0. \tag{\theequation a,b}\label{eq:inequalitylinearinu0_ab}
  \ee
\end{proposition}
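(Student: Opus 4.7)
The plan is to notice that, for $\lambda$ and $\alpha$ fixed, both expressions in \eqref{eq:inequalitylinearinu0_ab} are \emph{affine} in $u$. An affine function attains its extrema on $[0,1]$ at the endpoints, so strict positivity on the open interval $(0,1)$ (which contains the range of $u$) follows once I verify appropriate sign conditions at $u=0$ and $u=1$, exploiting the hypothesis $\alpha\le 1/(1+\lambda)$.

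For inequality \eqref{eq:inequalitylinearinu0_ab}(a), set $f(u):=2(1-u)(1-\alpha)-\lambda(\alpha-u)$. I would evaluate $f(1)=\lambda(1-\alpha)>0$ (using $\alpha<1$) and $f(0)=2-\alpha(2+\lambda)$; since $\alpha(2+\lambda)\le (2+\lambda)/(1+\lambda)=1+1/(1+\lambda)<2$, we get $f(0)>0$. Writing $f(u)=(1-u)f(0)+uf(1)$ then gives $f(u)>0$ on all of $[0,1]$, hence on the range of $u$.

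For inequality \eqref{eq:inequalitylinearinu0_ab}(b), set $g(u):=(1-u)(1-\alpha)-\lambda(\alpha-u)$. Again $g(1)=\lambda(1-\alpha)>0$, but now $g(0)=1-\alpha(1+\lambda)\ge 0$, with equality possible precisely when $\alpha=1/(1+\lambda)$. This is the only mildly delicate point: here the endpoint value is merely nonnegative, not strictly positive. It is handled by the same linear interpolation $g(u)=(1-u)g(0)+ug(1)\ge ug(1)>0$, valid for every $u\in(0,1)$, which is exactly the regime where $u$ takes its values. Thus strict positivity still holds throughout the range of $u$.

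I do not anticipate a genuine obstacle; the argument is just a two-point check combined with the observation that an affine function is a convex combination of its endpoint values. The one place requiring care is the boundary case $\alpha=1/(1+\lambda)$ in part (b), where positivity at $u=0$ degenerates, and this is resolved by the fact that $u$ is strictly greater than $0$, making the $ug(1)$ contribution strictly positive.
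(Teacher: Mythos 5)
Your proof is correct. It is worth noting, though, that you take a genuinely different route from the paper: the paper fixes $u$ and views each expression as an affine function of $\alpha$, observes that the coefficient of $\alpha$ (namely $-2(1-u)-\lambda$, resp. $-(1-u)-\lambda$) is negative so that the minimum over $\alpha\in(0,1/(1+\lambda)]$ is attained at the right endpoint $\alpha = 1/(1+\lambda)$, and then checks that the resulting value $\lambda(1-u+\lambda u)/(1+\lambda)$ is strictly positive. You instead fix $\alpha$ and view the expressions as affine in $u$, writing each as the convex combination $(1-u)f(0)+u\,f(1)$ of its endpoint values and checking signs at $u=0$ and $u=1$. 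The two arguments are dual one-variable optimizations and are of comparable length; the paper's version does a single endpoint check but needs a small algebraic simplification, while yours does two endpoint checks but each is immediate. Your approach also makes the boundary case $\alpha = 1/(1+\lambda)$ for inequality (b) transparent: there $g(0)=0$, and strict positivity is rescued precisely because $u>0$, which you correctly isolate via $g(u)\ge u\,g(1)>0$. Both are valid proofs; yours is a clean alternative.
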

\begin{proof}
By collecting $\alpha$ terms,
\[
 2 (1-u)(1-\alpha) -\lambda(\alpha-u) = (-2(1 - u) -\lambda) \alpha + 2 -2 u + \lambda u,
\]
which is linear in $\alpha$. Therefore, since $u\in(0,1)$, $-2(1 - u) -\lambda <0$ and  the function takes on its minimum at $\alpha = 1/(1+\lambda)$. Hence,
\[
 2 (1-u)(1-\alpha) -\lambda(\alpha-u)  \geq  \frac{\lambda(1-u +\lambda u)}{1+\lambda}> 0.
\]

Similarly, one can show $(1-u)(1-\alpha)-\lambda(\alpha-u)>0$.
\end{proof}

From this proposition, first integral  \eqref{eq:firstintegral} and Theorem \ref{eq:restriction_thm}, we have that the derivative of a solution $u(\cdot \,;\lambda,L)$ of \eqref{eq:ode} must satisfy 
\begin{equation}\label{eq:firstderivative1}
  u'(x;\lambda,L) = \begin{cases}
 	 \displaystyle \frac{\sqrt{\lambda(\|u\|_\infty-u)}\sqrt{2 (1-u)(1-\|u\|_\infty) -\lambda(\|u\|_\infty-u)}}{(1-u)(1-\|u\|_\infty)-\lambda(\|u\|_\infty-u)},  &x\in (-L,c], \vspace{3mm}\\
 \displaystyle - \frac{\sqrt{\lambda(\|u\|_\infty-u)}\sqrt{2 (1-u)(1-\|u\|_\infty) -\lambda(\|u\|_\infty-u)}}{(1-u)(1-\|u\|_\infty)-\lambda(\|u\|_\infty-u)},  & x\in (c,L),
   \end{cases}
\end{equation}
where $\|u\|_\infty \in (0,1/(1+\lambda)]$ and $\lambda>0$. Next, we use derivative \eqref{eq:firstderivative1} to prove that the solutions to \eqref{eq:ode} must be even.

\begin{theorem}\label{thm:even}
 If  $u(\cdot \,;\lambda,L)$ is a solution of  \eqref{eq:ode} for $\lambda>0$ and $L>0$, then $u$ is an even function in $x$; moreover, it attains its unique maximum at $x=0$, which implies that $u(0) = \|u\|_\infty$ and $u'(0)=0$.
\end{theorem}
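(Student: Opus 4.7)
The plan is to leverage the explicit formula \eqref{eq:firstderivative1}, which expresses $u'$ purely as a function of $u$ and $\|u\|_\infty$, to separate variables. This reduces the boundary value problem to a purely algebraic/integral identity that forces both the location of the maximum and the symmetry of $u$.

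By Lemma \ref{lem:nc1}, $u$ attains its maximum at a unique point $c\in(-L,L)$ where $u'(c)=0$, and $u$ is strictly monotone on each of $[-L,c]$ and $[c,L]$. Writing $F(u)$ for the common absolute value of the right-hand side of \eqref{eq:firstderivative1} (which depends only on $u$, $\|u\|_\infty$ and $\lambda$), separation of variables is legitimate on each monotone half. Combined with $u(\pm L)=0$ and $u(c)=\|u\|_\infty$, this yields
\[
 \int_0^{\|u\|_\infty}\frac{du}{F(u)} = c+L \qquad\text{and}\qquad \int_0^{\|u\|_\infty}\frac{du}{F(u)} = L-c.
\]
Equating these forces $c=0$, so $u(0)=\|u\|_\infty$ and $u'(0)=0$. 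Evenness now follows by the same separation argument on arbitrary subintervals: defining $G(\alpha)\colonequals \int_0^\alpha du/F(u)$, which is strictly increasing on $[0,\|u\|_\infty]$, the equation integrated from an endpoint to a general $x$ gives $G(u(x))=|x|$, whence $u(x)=G^{-1}(|x|)$ is manifestly even. (Alternatively, $v(x)\colonequals u(-x)$ satisfies the same second-order ODE with $v(0)=u(0)$ and $v'(0)=0=u'(0)$, and coincides with $u$ by standard uniqueness, since the ODE is smooth away from $u=1$ and $\|u\|_\infty<1$.)

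The main technical point to check is that $G$ is finite, i.e., that $1/F(u)$ is integrable at the upper endpoint $u=\|u\|_\infty$ where $F$ vanishes. From \eqref{eq:firstderivative1} the numerator carries a factor $\sqrt{\lambda(\|u\|_\infty-u)}$, while the second numerator factor tends to $\sqrt{2}\,(1-\|u\|_\infty)$ and, by Proposition \ref{prop:nocomplexderivative}, the denominator stays bounded away from zero on $[0,\|u\|_\infty]$. Hence $F(u)\sim C\sqrt{\|u\|_\infty - u}$ near the maximum and $1/F$ has only an integrable square-root singularity; the separation-of-variables identities above are therefore rigorous and the theorem follows.
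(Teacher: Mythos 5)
Your proof is correct and rests on the same core tool as the paper: separating variables in the explicit derivative formula \eqref{eq:firstderivative1} and using Proposition \ref{prop:nocomplexderivative} to keep the denominator away from zero (the paper does this by matching the ``times'' from each endpoint to a common level $U$ and concluding $x_1=-x_2$; you instead match the total times on the two monotone halves to get $c=0$ first, then recover evenness---a cosmetic reordering of the same argument). One small bookkeeping slip: integrating the identity $du/F(u)=\mp\,dx$ from the endpoint $x=\pm L$ to a general $x$ gives $G(u(x))=L-|x|$, not $|x|$; the evenness conclusion is unaffected since $L-|x|$ is still even.
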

\begin{proof}
From Lemma \ref{lem:nc1}, we know that $u$ attains its unique maximum at a value, say $x=c$, in $(-L,L)$. To show that $u$ is even, we use the fact that for every $U\in(0,u(c))$ there exists two values $x_1$ and $x_2$ in $(-L,c)$ and $(c,L)$, respectively, such that $u(x_1) = u(x_2)=U$. Thus, using \eqref{eq:firstderivative1}, we obtain
\[
\begin{aligned}
   \int_{-L}^{x_1} \ud{x} & = \int_{0}^{U}\frac{(1-u)(1-\|u\|_\infty)-\lambda(\|u\|_\infty-u)}{\sqrt{\lambda(\|u\|_\infty-u)}\sqrt{2 (1-u)(1-\|u\|_\infty) -\lambda(\|u\|_\infty-u)}} \ud{u} \\
    & = - \int_{U}^0\frac{(1-u)(1-\|u\|_\infty)-\lambda(\|u\|_\infty-u)}{\sqrt{\lambda(\|u\|_\infty-u)}\sqrt{2 (1-u)(1-\|u\|_\infty) -\lambda(\|u\|_\infty-u)}} \ud{u}  = \int_{x_2}^L\ud{x},
   \end{aligned}
\]
which implies $x_1 + L = L -x_2$; that is, $x_1 = -x_2$, proving that $u$ is an even function in $x$; moreover, since it has only  one maximum, $c=0$.
\end{proof}

We summarize the results of this section in the following theorem.
\begin{theorem}\label{thm:equivalent}
 The function $u(\cdot\:; \lambda, L)$ is a solution of 
\be\label{eq:equiv_ode}
 \begin{aligned}
 & -\left(\frac{u'}{\sqrt{1+(u')^2}} \right)' = \frac{\lambda}{(1-u)^2}, \ \   0<x<L; \quad u <1, \ \   -L<x<L \quad   u'(0)=u(L)= 0,
 \end{aligned}
\ee
if and only if its even extension to $(-L,L)$ is a solution of \eqref{eq:ode}. Also, no solutions of \eqref{eq:ode} exist, if $u(0) > 1/(1+\lambda)$. 
\end{theorem}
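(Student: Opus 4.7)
The theorem packages two separate claims, and I would handle them in order: first the equivalence, then the non-existence statement. Both are direct consequences of the symmetry and \emph{a priori} bound work done earlier in the section, so the proof is more bookkeeping than invention.

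For the forward direction of the equivalence, I would define the even extension $\tilde{u}(x) \ce u(|x|)$ on $[-L,L]$ and verify that it is a classical ($C^2$ on $(-L,L)$, $C^0$ on $[-L,L]$) solution of \eqref{eq:ode}. On each half-interval $(0,L)$ and $(-L,0)$ the equation is preserved under $x \mapsto -x$: the change of variable $y=-x$ gives $\tilde{u}'(x)^2 = u'(-x)^2$ and $\tilde{u}''(x)=u''(-x)$, so the mean-curvature operator applied to $\tilde{u}$ at $x<0$ equals the operator applied to $u$ at $-x$, and both sides of the PDE match. The boundary values $\tilde{u}(\pm L)=0$ are immediate from $u(L)=0$. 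The only place that requires care is regularity at $x=0$: continuity of $\tilde{u}'$ uses the hypothesis $u'(0)=0$ so that the left and right derivatives both vanish, and continuity of $\tilde{u}''$ follows by matching one-sided limits via the ODE itself, both being $-\lambda/(1-u(0))^2$ evaluated at $0^\pm$. This verification at the origin is the only substantive step and is the item I expect to require the most care, though it is still routine.

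The reverse direction is essentially free given Theorem \ref{thm:even}: any solution of \eqref{eq:ode} is even in $x$ and attains its unique maximum at $x=0$, so in particular $u'(0)=0$, and restricting to $[0,L]$ produces a solution of \eqref{eq:equiv_ode}.

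For the non-existence assertion, I would simply chain the earlier results: Theorem \ref{thm:even} identifies $u(0)=\|u\|_\infty$, and Theorem \ref{eq:restriction_thm} bounds $\|u\|_\infty \leq 1/(1+\lambda)$. Together they contradict the assumption $u(0) > 1/(1+\lambda)$, so no solution can exist in that regime. Thus the proof is essentially a consolidation of the preceding lemmas and theorems, with the only genuine computation being the $C^2$-gluing of the even extension at the origin.
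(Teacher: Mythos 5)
Your proof is correct and matches the paper's intent: the paper presents Theorem~\ref{thm:equivalent} without a written proof, explicitly framing it as a summary of the preceding results (Lemma~\ref{lem:nc1}, Theorem~\ref{eq:restriction_thm}, and Theorem~\ref{thm:even}), which is exactly the chain you invoke. Your explicit verification of the $C^2$ gluing of the even extension at $x=0$ is the one step the paper leaves tacit, and you handle it correctly by matching one-sided limits of $u''$ through the ODE.
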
 

\section{Time Map}\label{sect:tm}

In this section, to investigate the existence of solutions of \eqref{eq:ode}, we define and analyze the time map of 
 \be\label{eq:ode_ivp_tm}
 \begin{aligned}
 & -\left(\frac{u'}{\sqrt{1+(u')^2}} \right)' = \frac{\lambda}{(1-u)^2}, \ \   x>0; \qquad  u(0)= \alpha, \ \ u'(0)=0,
 \end{aligned}
\ee
for $\alpha \in (0,1/(1+\lambda)]$.
\begin{definition}
 For fixed $\lambda>0$, we define the time map $T(\cdot \:;\lambda): (0,1/(1+\lambda)] \to \R$ of  \eqref{eq:ode_ivp_tm} as the function that takes in $\alpha$ and returns the so-called ``time'' it takes for the solution of \eqref{eq:ode_ivp_tm} to hit the line $u=0$, i.e., it returns the smallest value $L$ such that $u(L;\lambda,\alpha) = 0$, where $u(\cdot\:;\lambda,\alpha) $ is the solution of \eqref{eq:ode_ivp_tm}.
\end{definition}
From this definition we have the following theorem.

\begin{theorem}\label{thm:equivalencethm}
For a given $L$, a certain value of $\lambda$ admits a solution of \eqref{eq:equiv_ode} if and only if  we can find an $\alpha$ in $(0,1/(1+\lambda)]$ such that $T(\alpha;\lambda) = L.$
\end{theorem}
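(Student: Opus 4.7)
The proof is essentially an unpacking of the setup already assembled in Section 2, so the plan is to verify each implication by assembling the pieces.

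For the forward direction, suppose $u(\cdot\,;\lambda,L)$ solves \eqref{eq:equiv_ode}. I would set $\alpha \ce u(0)$ and show it is a valid input to $T(\cdot\,;\lambda)$. Lemma \ref{lem:nc1} tells us $u$ is strictly concave down with a unique maximum on $(-L,L)$; combined with Theorem \ref{thm:even} (applied to the even extension of $u$, which solves \eqref{eq:ode}), that maximum sits at $x=0$, so $\alpha = u(0) = \|u\|_\infty$. Theorem \ref{eq:restriction_thm} then gives $\alpha \in (0, 1/(1+\lambda)]$, placing $\alpha$ in the domain of $T(\cdot\,;\lambda)$. Because $u$ solves \eqref{eq:equiv_ode} with $u'(0)=0$ and $u(0)=\alpha$, it also solves the initial-value problem \eqref{eq:ode_ivp_tm} on $[0,L]$ by uniqueness of ODE solutions. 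Since $u$ is strictly decreasing on $(0,L]$ (concavity plus $u'(0)=0$) and $u(L)=0$, the value $x=L$ is the first positive time at which the solution of \eqref{eq:ode_ivp_tm} hits zero, giving $T(\alpha;\lambda) = L$.

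For the reverse direction, I would start with $\alpha \in (0,1/(1+\lambda)]$ such that $T(\alpha;\lambda)=L$, and let $u(\cdot\,;\lambda,\alpha)$ be the solution of \eqref{eq:ode_ivp_tm}. By the definition of the time map, $u(L;\lambda,\alpha)=0$ and $u(x;\lambda,\alpha)>0$ for $x\in [0,L)$. The initial data directly give $u'(0)=0$, so the boundary conditions of \eqref{eq:equiv_ode} are met. The remaining requirement is $u<1$ on $[-L,L]$: since the ODE in \eqref{eq:ode_ivp_tm} forces $u''<0$ as long as $u<1$, and $u(0)=\alpha<1$ with $u'(0)=0$, $u$ is strictly decreasing on $(0,L]$, so $u\le \alpha<1$ throughout $[0,L]$. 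The even extension inherits the same bound on $[-L,L]$, completing the verification.

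The only nontrivial point, and the one I would take the most care with, is the forward direction's use of Theorem \ref{thm:even}: that theorem is stated for solutions of the symmetric problem \eqref{eq:ode}, so I need to observe that the even extension of any solution of \eqref{eq:equiv_ode} is $C^2$ across $x=0$ (which follows from $u'(0)=0$ and the regularity forced by the ODE) and hence solves \eqref{eq:ode}, so the theorem's conclusion that the maximum occurs at $x=0$ transfers back. Beyond that, the argument is a straightforward matching of boundary/initial data and an appeal to uniqueness of the IVP for the quasilinear equation, both of which have already been established in the preceding lemmas and theorems.
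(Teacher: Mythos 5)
Your proof is correct, and it is the natural fleshing-out of what the paper leaves implicit: the paper gives no argument for this theorem beyond the remark that the bound $\alpha \le 1/(1+\lambda)$ comes from Theorem~\ref{thm:equivalent}, treating the equivalence as an immediate consequence of the definition of the time map together with the necessary conditions of Section~2. Both directions as you write them (identifying $\alpha=u(0)=\|u\|_\infty$ via concavity and the even-extension argument, then invoking IVP uniqueness one way, and verifying the boundary data and the constraint $u<1$ the other way) are exactly the pieces the paper is implicitly invoking, so the approach is the same.
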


\begin{remark}
Note  that the upper bound on $\alpha$ comes from Theorem \ref{thm:equivalent}. 
\end{remark}

Because of this equivalent formulation for finding solutions of \eqref{eq:equiv_ode}, it is beneficial to have an analytic expression for the time map. To this end, we deduce that similar to solutions of \eqref{eq:ode}, solutions of \eqref{eq:ode_ivp_tm} satisfy the first integral 
\[
 \frac{1}{\sqrt{1+(u')^2}} - \frac{\lambda}{1-u}= 1 - \frac{\lambda}{1-\alpha},
\]
which implies
\begin{equation}\label{eq:firstderivative}
  u'(x;\lambda,\alpha) = - \frac{\sqrt{\lambda(\alpha-u)}\sqrt{2 (1-u)(1-\alpha) -\lambda(\alpha-u)}}{(1-u)(1-\alpha)-\lambda(\alpha-u)}, 
\end{equation}
for $\alpha \in (0,1/(1+\lambda)]$ and $\lambda>0$. Hence, in separating variables, integrating with respect to $x$ from $0$ to $L$ and using the change of variables $u = \alpha z$, we deduce  from  \eqref{eq:firstderivative} that our time map is given by 
\begin{equation}\label{eq:timemap}
T(\alpha;\lambda) = \sqrt{\frac{\alpha}{\lambda}} \int_{0}^{1} \frac{(1-\alpha z)(1-\alpha)-\lambda \alpha(1- z)}{\sqrt{1- z}\sqrt{2 (1-\alpha z)(1-\alpha) - \lambda \alpha(1- z) }}\ud z \equalscolon  \int_{0}^{1} K(\alpha,z;\lambda)\ud z.
\end{equation}

Next we prove that $T(\cdot\,;\lambda)$ is well defined for $\alpha \in (0,1/(1+\lambda)]$ and is in $C^2(0,1/(1+\lambda))$.

\begin{lemma}\label{lem:differentiable}
  Let $\lambda>0$ be fixed. Then $T(\alpha;\lambda)$ exists for each $\alpha\in (0,1/(1+\lambda)]$. Moreover, $T(\alpha;\lambda)$ is differentiable at each $\alpha\in(0,1/(1+\lambda))$ with its derivative given by the formula
\begin{equation}\label{eq:Tderivative}
 T'(\alpha;\lambda) =   \int_{0}^{1} \pd{K}{\alpha}(\alpha,z;\lambda)\ud z.
\end{equation}
Furthermore, $T'(\alpha;\lambda)$ is differentiable at each $\alpha\in(0,1/(1+\lambda))$ with its derivative given by the formula
\begin{equation}\label{eq:Tpderivative}
 T''(\alpha;\lambda) =   \int_{0}^{1} \pdd{K}{{\alpha}}(\alpha,z;\lambda)\ud z.
\end{equation}
\end{lemma}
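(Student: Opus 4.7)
The plan is to handle existence first, then reduce both differentiation claims to two applications of the Leibniz/dominated-convergence theorem for parameter-dependent integrals.

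Set $A(\alpha, z) \colonequals (1-\alpha z)(1-\alpha) - \lambda \alpha (1-z)$ and $B(\alpha, z) \colonequals 2(1-\alpha z)(1-\alpha) - \lambda \alpha (1-z)$, so that
\begin{equation*}
K(\alpha, z; \lambda) = \sqrt{\frac{\alpha}{\lambda}} \, \frac{A(\alpha, z)}{\sqrt{1-z}\, \sqrt{B(\alpha, z)}}.
\end{equation*}
Under the identification $u = \alpha z$, Proposition~\ref{prop:nocomplexderivative} yields $A \geq 0$ and $B > 0$ on $(0, 1/(1+\lambda)] \times [0,1]$. Since $B$ is affine in $z$ with $B(\alpha, 0) = 2 - (2+\lambda)\alpha$ and $B(\alpha, 1) = 2(1-\alpha)^2$, both of which are strictly positive for $\alpha \in (0, 1/(1+\lambda)]$, I can bound $B$ away from zero uniformly in $z$. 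Consequently $K(\alpha, z; \lambda) = F(\alpha, z)/\sqrt{1-z}$ for the bounded continuous function $F(\alpha, z) \colonequals \sqrt{\alpha/\lambda}\, A(\alpha, z)/\sqrt{B(\alpha, z)}$, so the only singularity is the $(1-z)^{-1/2}$ at $z = 1$, which is integrable. This proves existence of $T(\alpha;\lambda)$ for all $\alpha \in (0, 1/(1+\lambda)]$.

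For the differentiability claims, fix any compact subinterval $[\alpha_1, \alpha_2] \subset (0, 1/(1+\lambda))$. Because $A$ and $B$ are polynomials (in fact affine) in $\alpha$ and $B$ is bounded away from zero on $[\alpha_1, \alpha_2] \times [0,1]$, the function $F$ is $C^\infty$ in $\alpha$, and both $\partial_\alpha F$ and $\partial_\alpha^2 F$ are continuous on the compact set $[\alpha_1, \alpha_2] \times [0,1]$, hence uniformly bounded there by some constants $M_1, M_2$. Differentiating $K = F/\sqrt{1-z}$ in $\alpha$ therefore gives $|\partial_\alpha^k K(\alpha, z; \lambda)| \leq M_k (1-z)^{-1/2}$ for $k = 1, 2$, and the right-hand side is integrable on $[0,1]$ and independent of $\alpha$. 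The standard differentiation-under-the-integral-sign theorem then justifies interchanging $\partial_\alpha$ with $\int_0^1$, yielding formulas \eqref{eq:Tderivative} and \eqref{eq:Tpderivative}. Covering $(0, 1/(1+\lambda))$ by such subintervals gives the full claim.

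The main obstacle is cosmetic rather than substantive: one has to write out $\partial_\alpha F$ and $\partial_\alpha^2 F$ explicitly to confirm they are continuous polynomial-in-$\alpha$ expressions divided by odd positive powers of $\sqrt{B}$, so that the hypothesis of the dominated-convergence-based Leibniz theorem is verified. No algebraic cancellation is needed; the positivity of $B$ bounded away from zero \emdash obtained from Proposition~\ref{prop:nocomplexderivative} together with the two endpoint values above \emdash ensures that every denominator produced by differentiation remains uniformly nonvanishing on each compact $\alpha$-subinterval, which is exactly what feeds the dominating bound.
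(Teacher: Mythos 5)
Your argument is correct and uses the same underlying tool as the paper (Leibniz differentiation under the integral sign with an integrable dominating function, i.e.\ Theorem 10.39 of~\cite{apostol1974mathematical}), but it constructs the dominating function more abstractly. Where the paper explicitly computes $\partial K/\partial\alpha = H_1 - H_2 + H_3$ and $\partial^2 K/\partial\alpha^2$, pushes through the algebra in the appendix, and exhibits the majorants $C_1/(\sqrt{z}\sqrt{1-z}) + C_2/((1-(1+\lambda)^{-1}z)^{3/2}\sqrt{1-z})$ and $B/(\sqrt{1-z}(1-(1+\lambda)^{-1}z)^{5/2})$, you instead observe that $B(\alpha,z)$ is affine in $z$ with strictly positive values at $z=0$ and $z=1$, hence uniformly bounded away from zero, so that $F=\sqrt{\alpha/\lambda}\,A/\sqrt{B}$ is $C^\infty$ in $\alpha$ with $\partial_\alpha^k F$ continuous (hence bounded by $M_k$) on any compact $[\alpha_1,\alpha_2]\times[0,1]\subset(0,1/(1+\lambda))\times[0,1]$, giving the clean majorant $M_k(1-z)^{-1/2}$. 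This is shorter and avoids the appendix entirely; the price is that the paper's explicit decomposition into $H_1,H_2,H_3$ is not merely scaffolding for this lemma but is reused directly in the proof of Proposition~\ref{prop:properties1}, parts \eqref{prop:item3} and \eqref{prop:item4}, where the individual limits of $\int_0^1 H_i\,\mathrm{d}z$ as $\alpha\to 0^+$ must be analyzed and where a dominating function valid up to the right endpoint $\alpha=1/(1+\lambda)$ is needed for \eqref{eq:LDClimitinside}. One small slip: you write that $A$ and $B$ are ``polynomials (in fact affine) in $\alpha$''; they are affine in $z$, but quadratic in $\alpha$ (each contains an $\alpha^2 z$ term). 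This does not affect the argument, since all that is used is that they are polynomial, hence smooth, in $\alpha$.
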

\begin{proof}
  First, $T$ is well defined for $\alpha \in (0,1/(1+\lambda)]$ since
\begin{equation}\label{eq:welldefinedT}
\begin{aligned}
 |T(\alpha;\lambda)| & \leq  \sqrt{\frac{\alpha}{\lambda}} \int_{0}^{1} \frac{1}{\sqrt{1- z}\sqrt{2 (1-\alpha z)(1-\alpha) - \lambda \alpha(1- z) }}\ud z \\
	& \leq  \frac{\sqrt{\alpha}\sqrt{1+\lambda}}{\lambda} \int_{0}^{1} \frac{1}{\sqrt{1- z}\sqrt{1- \alpha z}}\ud z  = \frac{\sqrt{1+\lambda}}{\lambda} \log\left(\frac{1 + \sqrt{\alpha}}{1 - \sqrt{\alpha}}\right),
\end{aligned}
\end{equation}
where from inequality (\ref{eq:inequalitylinearinu0}a) we have used the fact that
\be\label{eq:inequalityforlem34}
  \frac{1}{\sqrt{2 (1-\alpha z)(1-\alpha) - \lambda \alpha(1- z)}} \leq \frac{\sqrt{1+\lambda}}{\sqrt{\lambda}\sqrt{1- \alpha z}}
\ee
for $\alpha\in(0,1/(1+\lambda)]$, $\lambda>0$, $z\in (0,1)$.

Next, 
\[
\begin{aligned}
 \pd{K}{\alpha}(\alpha,z;\lambda) & = \frac{1}{2\sqrt{\lambda \alpha}}\frac{(1-\alpha z)(1-\alpha)-\lambda \alpha(1- z)}{\sqrt{1- z}\sqrt{2 (1-\alpha z)(1-\alpha) - \lambda \alpha(1- z) }} \\
	& \quad -\sqrt{\frac{\alpha}{\lambda}}\frac{1 +(1-2\alpha)z  +\lambda(1-z)}{\sqrt{1- z}\sqrt{2 (1-\alpha z)(1-\alpha) - \lambda \alpha(1- z) }}\\
	& \quad +\sqrt{\frac{\alpha}{\lambda}}\frac{((1-\alpha z)(1-\alpha)-\lambda \alpha(1- z))(2+\lambda(1-z) + (2-4\alpha)z)}{2\sqrt{1-z}\left(2 (1-\alpha z)(1-\alpha) - \lambda \alpha(1- z)\right)^{3/2}} \\
	& \equalscolon  H_1(\alpha,z;\lambda) - H_2(\alpha,z;\lambda) + H_3(\alpha,z;\lambda),
\end{aligned}
\]
which is defined for $\alpha\in(0,1/(1+\lambda)]$. Thus, 
\[
 \left|\pd{K}{\alpha}(\alpha,z;\lambda)\right|  \leq |H_1|+|H_2|+|H_3|,
\]
so that  for $\alpha\in[a,1/(1+\lambda)]$, $a>0$, and $z\in(0,1)$,
\begin{equation}\label{eq:dominatedderivative}
 \left|\pd{K}{\alpha}(\alpha,z;\lambda)\right|  \leq  \frac{C_1}{\sqrt{z}\sqrt{1- z}} + \frac{C_2}{\left(1- (1+\lambda)^{-1}z \right)^{3/2}\sqrt{1-z}} \in L^1(0,1),
\end{equation}
where $C_1$ and $C_2$ are positive constants independent of $z$ and $\alpha$ (see \ref{sec:appendix}, equations \eqref{eq:boundG1}--\eqref{eq:boundAppend4}). Therefore, we have by Theorem 10.39 in~\cite{apostol1974mathematical} that $T(\alpha;\lambda)$ is differentiable at each $\alpha\in(0,1/(1+\lambda))$ with its derivative given by equation \eqref{eq:Tderivative}.

Furthermore, 
\begin{equation}\label{eq:d2G0du02}
\begin{aligned}
 \pdd{K}{{\alpha}}(\alpha,z;\lambda) & = \frac{1}{{\alpha}^{3/2} \sqrt{\lambda}} \frac{a_0 + a_1 z + a_2 z^2 + a_3 z^3}{\sqrt{1-z}(2 (1-\alpha z)(1-\alpha) - \lambda \alpha(1- z))^{5/2}},
\end{aligned}
\end{equation}
which is defined for each $\alpha\in(0,1/(1+\lambda))$ and where
\begin{equation}\label{eq:d2G0du02coeff}
\begin{alignedat}{2}
   a_0 & \colonequals -1 +\alpha(1-\lambda),  & \qquad  a_1 & \colonequals \alpha(1+5\alpha - 10\alpha^2 + \lambda +2\alpha^3(2+\lambda)), \\
   a_2 & \colonequals -\alpha^3(10-17\alpha+\alpha^2(7+3\lambda)) ,  & \qquad  a_3 & \colonequals \alpha^4(4+3\alpha^2 - 2\lambda + \alpha(-7+3\lambda)).
\end{alignedat}
\end{equation}
Thus, for $\alpha\in[a,1/(1+\lambda)]$, $a>0$, and $z\in(0,1)$,
\[\begin{aligned}
 \left|\pdd{K}{{\alpha}}(\alpha,z;\lambda)\right| & \leq \frac{1}{{\alpha}^{3/2} \sqrt{\lambda}} \frac{|a_0| + |a_1|  + |a_2| + |a_3|}{\sqrt{1-z}(2 (1-\alpha z)(1-\alpha) - \lambda \alpha(1- z))^{5/2}} \\
	& \leq (|a_0| + |a_1|  + |a_2| + |a_3|)\frac{(1+\lambda)^{5/2}}{{a}^{3/2}\lambda^{3}}\frac{1}{\sqrt{1-z} (1- \alpha z)^{5/2}} \\
	& \leq  \frac{B}{\sqrt{1-z} (1- (1+\lambda)^{-1}z )^{5/2}},
\end{aligned}
\]
where $B$ is a positive constant that is independent of $z$ and $\alpha$. Since $(1-z)^{-1/2} (1- a z)^{-5/2}\in L^1(0,1)$ for $a\in(0,1)$, by Theorem 10.39 in~\cite{apostol1974mathematical} we have that $T'(\alpha;\lambda)$ is differentiable at each $\alpha\in(0,1/(1+\lambda))$ with its derivative given by  equation \eqref{eq:Tpderivative}.
\end{proof}

For fixed $\lambda >0$, we can now differentiate $T$ by taking the derivative inside the integral, which makes it is easier to analyze the behavior of $T$ on $(0,1/(1+\lambda)]$.

\begin{proposition}\label{prop:properties1}
 Let $\lambda>0$ be fixed. Then
\begin{enumerate}[\normalfont (i) ]
 \item\label{prop:item1} for all $\alpha \in (0,1/(1+\lambda)]$, $T(\alpha;\lambda) >0;$
 \item\label{prop:item2} $T(\alpha;\lambda) \to  0$  as $\alpha \to 0^+;$
 \item\label{prop:item3} $T'(\alpha;\lambda) \to +\infty$ as $\alpha \to 0^+;$
 \item\label{prop:item4} $\displaystyle \lim_{\alpha \to \left(\frac{1}{1+\lambda}\right)^-} T'(\alpha;\lambda) <0;$
 \item\label{prop:item5} there exists a value $\alpha^* \in (0,1/(1+\lambda)]$ such that  $ T(\alpha^*;\lambda) = \max\left\{T(\alpha;\lambda) : \alpha \in (0,1/(1+\lambda)]\right\}.$
\end{enumerate}
\end{proposition}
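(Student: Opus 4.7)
The plan is to dispatch (i) and (ii) by direct estimates on the integrand, to handle (iii) and (iv) by differentiation-under-the-integral combined with dominated convergence, and to obtain (v) as a compactness corollary. For (i), under the substitution $u = \alpha z$ implicit in the derivation of \eqref{eq:timemap}, the range of $u$ lies in $(0, \alpha) \subset (0, 1/(1+\lambda))$, so Proposition~\ref{prop:nocomplexderivative} guarantees that both $(1-\alpha z)(1-\alpha) - \lambda\alpha(1-z)$ and $2(1-\alpha z)(1-\alpha) - \lambda\alpha(1-z)$ are strictly positive; hence the integrand $K(\alpha,z;\lambda) > 0$ on $(0,1)$ and $T(\alpha;\lambda) > 0$. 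Part (ii) is immediate from the bound $T(\alpha;\lambda) \le \frac{\sqrt{1+\lambda}}{\lambda}\log\!\left(\frac{1+\sqrt{\alpha}}{1-\sqrt{\alpha}}\right)$ established in \eqref{eq:welldefinedT}, whose right-hand side tends to $0$ as $\alpha \to 0^+$.

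For (iii), I would factor $T(\alpha;\lambda) = \sqrt{\alpha/\lambda}\, M(\alpha;\lambda)$, where $M(\alpha;\lambda)$ denotes the integral in \eqref{eq:timemap} with the prefactor removed. The integrand of $M$ is smooth in $\alpha$ down to $\alpha = 0$ with pointwise limit $(2(1-z))^{-1/2}$, so dominated convergence gives $M(\alpha;\lambda) \to \sqrt{2}$, and $\partial_\alpha M(\alpha;\lambda)$ stays bounded by an estimate analogous to \eqref{eq:dominatedderivative}. Consequently
\[
T'(\alpha;\lambda) = \frac{M(\alpha;\lambda)}{2\sqrt{\lambda\alpha}} + \sqrt{\alpha/\lambda}\,\partial_\alpha M(\alpha;\lambda)
\]
diverges to $+\infty$, since the first term tends to $+\infty$ while the second remains bounded. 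For (iv), the majorant \eqref{eq:dominatedderivative} is uniform on $[a, 1/(1+\lambda)]$ for any $a>0$, so dominated convergence applied to the decomposition $\partial_\alpha K = H_1 - H_2 + H_3$ of Lemma~\ref{lem:differentiable} yields
\[
\lim_{\alpha \to (1/(1+\lambda))^-} T'(\alpha;\lambda) = \int_0^1 \left.\partial_\alpha K(\alpha,z;\lambda)\right|_{\alpha = 1/(1+\lambda)}\,dz.
\]
At $\alpha = 1/(1+\lambda)$ the identity $(1-\alpha z)(1-\alpha) - \lambda\alpha(1-z) = (1-\alpha)^2 z$ collapses the integrand: both $H_1$ and $H_3$ inherit this vanishing factor of $z$, whereas $H_2$ is strictly positive with $H_2|_{z=0} = (1+\lambda)/\lambda$. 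After this simplification the three resulting integrals become elementary\emdash only $z$, $1-z$, and a linear function of $z$ remain as algebraic factors in the integrands\emdash and a closed-form evaluation establishes the strict inequality~$<0$.

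Part (v) is then a compactness corollary: by Lemma~\ref{lem:differentiable}, $T(\cdot\,;\lambda)$ is continuous on $(0, 1/(1+\lambda))$, dominated convergence using \eqref{eq:welldefinedT} extends continuity up to the right endpoint $\alpha = 1/(1+\lambda)$, and setting $T(0;\lambda) := 0$ as dictated by (ii) produces a continuous function on the compact interval $[0, 1/(1+\lambda)]$, which therefore attains its maximum. Since $T(0;\lambda) = 0 < T(1/(1+\lambda);\lambda)$ by (i), the maximizer $\alpha^*$ must lie in $(0, 1/(1+\lambda)]$. The hard part will be the strict sign verification in (iv): although negativity of $\partial_\alpha K|_{\alpha = 1/(1+\lambda)}$ is transparent near $z = 0$, all three terms $H_1, H_2, H_3$ carry a $(1-z)^{-1/2}$ singularity near $z = 1$, so ruling out a positive net contribution there requires either the explicit elementary evaluation indicated above or a sharp pointwise bound controlling $H_1 + H_3$ by $H_2$.
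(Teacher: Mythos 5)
Your approach tracks the paper's proof quite closely in structure: (i) via positivity of the integrand from Proposition~\ref{prop:nocomplexderivative}, (ii) via the logarithmic bound \eqref{eq:welldefinedT}, (iii) by splitting off the $\sqrt{\alpha/\lambda}$ prefactor (which is exactly what the paper's decomposition $\partial_\alpha K = H_1 - H_2 + H_3$ does, since $\int_0^1 H_1\,\mathrm{d}z = M(\alpha;\lambda)/(2\sqrt{\lambda\alpha})$ and $\int_0^1(-H_2 + H_3)\,\mathrm{d}z = \sqrt{\alpha/\lambda}\,\partial_\alpha M$), (iv) by pushing the limit inside the integral and evaluating at $\alpha = 1/(1+\lambda)$, and (v) by continuous extension plus compactness. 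For (iii), your claim that ``$\partial_\alpha M(\alpha;\lambda)$ stays bounded by an estimate analogous to \eqref{eq:dominatedderivative}'' is not quite self-contained: the constants in \eqref{eq:dominatedderivative} depend on a lower cutoff $a>0$ for $\alpha$, so that estimate does not by itself yield uniformity as $\alpha\to 0^+$. What you actually want, and what the paper supplies, is the pair of estimates \eqref{eq:intofG1to0}--\eqref{eq:intofG2to0}, which give $\int_0^1 H_2\,\mathrm{d}z = \mathcal{O}(\sqrt{\alpha})$ and $\int_0^1 H_3\,\mathrm{d}z = \mathcal{O}(\sqrt{\alpha})$ directly, so the boundary term $\sqrt{\alpha/\lambda}\,\partial_\alpha M \to 0$ rather than merely ``stays bounded.''

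The genuine gap is in (iv), and you correctly flag it yourself. Your reduction via the identity $(1-\alpha z)(1-\alpha) - \lambda\alpha(1-z) = (1-\alpha)^2 z$ at $\alpha = 1/(1+\lambda)$ is correct and is exactly how the paper simplifies $\partial_\alpha K$ at the right endpoint, but everything rests on actually establishing that
\[
\int_0^1 \left.\pd{K}{\alpha}\right|_{\alpha = 1/(1+\lambda)}\,\mathrm{d}z \;=\; \int_0^1 \frac{3\lambda(z-1) - (z-1)^2 - \lambda^2(z^2 - 2z + 3) + \lambda^3(z^2 + z - 1)}{\lambda\sqrt{1+\lambda}\,\sqrt{1-z}\,(1 - z + \lambda(1+z))^{3/2}}\,\mathrm{d}z \;<\; 0
\]
for \emph{every} $\lambda>0$, and this is where the work lies. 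You observe that the integrand is negative near $z=0$ (indeed $-H_2(1/(1+\lambda),0;\lambda) = -(1+\lambda)/\lambda$) and that there might be a positive contribution from the $(1-z)^{-1/2}$ tail near $z=1$, but you do not resolve it. The paper resolves it by a closed-form antiderivative followed by a case split: the integral equals $-8/5$ when $\lambda = 1$, and for $\lambda \ne 1$ it equals $\bigl(1+\lambda^2-2\lambda^4+6\lambda^2\sqrt{\lambda^2-1}\arctan\sqrt{(\lambda-1)/(1+\lambda)}\bigr)/\bigl(\lambda(1+\lambda)(\lambda-1)^3\bigr)$, whose sign must then be checked separately on $(0,1)$ (after rewriting the complex square root and arctangent as a real logarithm) and on $(1,\infty)$. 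That sign analysis is not automatic from the formula; it is the substance of the proof of (iv), and your proposal leaves it as an assertion. The alternative you float---a pointwise domination of $H_1 + H_3$ by $H_2$ at $\alpha = 1/(1+\lambda)$---would be a nicer argument if it worked, but you have not shown that such a bound holds, and it is not obvious that it does, since all three terms share the same $(1-z)^{-1/2}$ singularity.

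The remaining parts, (i), (ii), and (v), are correct and match the paper's reasoning.
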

\begin{proof}
\begin{asparaenum}[\normalfont(i)]
 \item This follows from definition \eqref{eq:timemap} and Proposition \ref{prop:nocomplexderivative}.
 \item Since $ \lim_{\alpha \to 0^+}\log\left(\frac{1 + \sqrt{\alpha}}{1 - \sqrt{\alpha}}\right) = 0,$ by inequality \eqref{eq:welldefinedT}, we have $T(\alpha;\lambda) \to 0$ as $\alpha \to 0^+$. 
 \item From inequality \eqref{eq:inequalityforlem34}, we have  
\begin{equation}\label{eq:intofG1to0}
\begin{aligned}
 \left| \int_0^1 H_2(\alpha,z;\lambda)  \ud z\right|& \leq \int_0^1  \left|H_2(\alpha,z;\lambda) \right| \ud z \\
	& \leq (2 +\lambda)\frac{\sqrt{\alpha}\sqrt{1+\lambda}}{\lambda^2} \int_0^1 \frac{1}{\sqrt{1- z}} \frac{1}{\sqrt{1- \alpha z}}\ud z \\
	& = \frac{(2 +\lambda)\sqrt{1+\lambda}}{\lambda^2}  \log\left(\frac{1 + \sqrt{\alpha}}{1 - \sqrt{\alpha}}\right) \to 0 \quad \mbox{ as } \alpha \to 0^+,
\end{aligned}
\end{equation}
which implies that $\int_0^1 H_2(\alpha,z;\lambda) \ud z \to 0$ as $\alpha \to 0^+$. Similarly, from inequality \eqref{eq:boundG3},
\begin{equation}\label{eq:intofG2to0}
\begin{aligned}
 \left| \int_0^1 H_3(\alpha,z;\lambda)  \ud z\right|& \leq \int_0^1  \left|H_3(\alpha,z;\lambda) \right| \ud z \\
	& \leq    \frac{\sqrt{\alpha}(1+\lambda)^{3/2}(4+\lambda)}{2\lambda^{2}} \int_0^1\frac{1}{\left(1- \alpha z\right)^{3/2}} \frac{1}{\sqrt{1-z}}\ud z \\
	& =  \frac{(1+\lambda)^{3/2}(4+\lambda)}{\lambda^{2}}  \frac{\sqrt{\alpha}}{1-\alpha} \to 0 \quad \mbox{ as } \alpha \to 0^+,
\end{aligned}
\end{equation}
and $\int_0^1 H_3(\alpha,z;\lambda) \ud z \to 0$ as $\alpha \to 0^+$. But 
\begin{equation}\label{eq:intofG3to0}
\begin{aligned}
  \int_0^1 H_1(\alpha,z;\lambda)  \ud z & \geq  \frac{1}{2\sqrt{2\lambda \alpha}}  \int_0^1   \left((1-\alpha z)(1-\alpha)-\lambda \alpha(1- z)\right)\ud z\\ 
	& = \frac{1}{2\sqrt{2\lambda \alpha}}  \left(1 - \frac{3}{2}\alpha + \frac{\alpha^2}{2} - \frac{\lambda \alpha}{2}\right) \to +\infty \quad \mbox{ as } \alpha \to 0^+.
\end{aligned}
\end{equation}
Therefore, from equation \eqref{eq:Tderivative} and inequalities \eqref{eq:intofG1to0}--\eqref{eq:intofG3to0}, $ \lim_{\alpha \to 0^+} T'(\alpha;\lambda) = +\infty.$
 \item From inequality \eqref{eq:dominatedderivative} and Theorem 10.38 of~\cite{apostol1974mathematical}, we have 
 \begin{equation}\label{eq:LDClimitinside}
  \lim_{\alpha \to \left(\frac{1}{1+\lambda}\right)^-} T'(\alpha;\lambda) = \int_0^1 \pd{K}{\alpha}\left(\frac{1}{1+\lambda},z;\lambda\right) \ud z.
 \end{equation} 
 Now,
\[
 \pd{K}{\alpha}\left(\frac{1}{1+\lambda},z;\lambda\right) =\frac{3\lambda (z-1)-(z-1)^2 -\lambda^2(z^2-2 z + 3)+\lambda^3(z^2+z -1)}{\lambda\sqrt{1+\lambda}\sqrt{1-z}(1 -z + \lambda(1+z))^{3/2}}.
\]

\begin{asparaenum}[\textsc{Case} I:]
\item If $\lambda=1$, then integrating by parts gives
 \begin{equation}\label{eq:int01dGdu0eq1}
  \int_0^1 \pd{K}{\alpha}\left(\frac{1}{1+\lambda},z;\lambda\right) \ud z = \int_0^1 \frac{-8+8z-z^2}{4\sqrt{1-z}} \ud z = -\frac{8}{5}, \qquad \mbox{for }\lambda= 1.
 \end{equation} 
 
 \item If $\lambda\neq1$, then 
  \begin{equation}\label{eq:integration1dGdu0}
  \int_0^1 \pd{K}{\alpha}\left(\frac{1}{1+\lambda},z;\lambda\right) \ud z  =  \frac{1+\lambda^2-2\lambda^4+6 \lambda^2\sqrt{\lambda^2-1} \arctan\sqrt{\frac{\lambda-1}{1+\lambda}}}{\lambda(1+\lambda)(\lambda-1)^3}.
 \end{equation}
 \begin{asparaenum}[\qquad (a)]
\item If $\lambda<1$, we have $\sqrt{\lambda^2-1} = i \sqrt{1-\lambda^2}$ and 
\[
 \arctan\sqrt{\frac{\lambda-1}{1+\lambda}} = \arctan i\sqrt{\frac{1-\lambda}{1+\lambda}}  = \frac{i}{2}\log \left( \frac{\sqrt{1+\lambda}+\sqrt{1-\lambda}}{\sqrt{1+\lambda}-\sqrt{1-\lambda}}\right),
\]
which implies from  equation \eqref{eq:integration1dGdu0}  that
 \begin{equation}\label{eq:int01dGdu0eq2}
  \begin{aligned}
  \int_0^1 \pd{K}{\alpha}\left(\frac{1}{1+\lambda},z;\lambda\right) \ud z  & =  \frac{1+\lambda^2-2\lambda^4-3 \lambda^2  \sqrt{1-\lambda^2} \log \left( \frac{\sqrt{1+\lambda}+\sqrt{1-\lambda}}{\sqrt{1+\lambda}-\sqrt{1-\lambda}}\right)}{\lambda(1+\lambda)(\lambda-1)^3}<0, \quad \mbox{for }\lambda<1.
  \end{aligned}
 \end{equation}
 
\item If $\lambda>1$, we have $0<\sqrt{\lambda-1} < \sqrt{\lambda+1}$, which implies $0<\arctan\sqrt{\frac{\lambda-1}{1+\lambda}}$, and hence, from equation  \eqref{eq:integration1dGdu0},
 \begin{equation}\label{eq:int01dGdu0eq3}
  \begin{aligned}
  \int_0^1 \pd{K}{\alpha}\left(\frac{1}{1+\lambda},z;\lambda\right) \ud z  & =\frac{1+\lambda^2-2\lambda^4+6 \lambda^2\sqrt{\lambda^2-1} \arctan\sqrt{\frac{\lambda-1}{1+\lambda}}}{\lambda(1+\lambda)(\lambda-1)^3}<0, \quad \mbox{for }\lambda>1.
  \end{aligned}
 \end{equation}
 \end{asparaenum}
 \end{asparaenum}

Therefore, by  equations \eqref{eq:int01dGdu0eq1}, \eqref{eq:int01dGdu0eq2} and \eqref{eq:int01dGdu0eq3} the integral  $ \int_0^1 \partial K / \partial \alpha \left((1+\lambda)^{-1},z;\lambda\right) \ud z <0$, for $\lambda>0$, which implies from equation \eqref{eq:LDClimitinside} that
\[
  \lim_{\alpha \to \left(\frac{1}{1+\lambda}\right)^-} T'(\alpha;\lambda) <0.
\]

\item If we extend $T$ to be defined on $[0,1/(1+\lambda)]$ such that $T(0;\lambda) =0$, then by part \eqref{prop:item2} $T(\alpha;\lambda)$ is continuous on a compact set, $[0,1/(1+\lambda)]$ and it attains it supremum in $[0,1/(1+\lambda)]$. That is, by part \eqref{prop:item1}, there exists a value $\alpha^* \in (0,1/(1+\lambda)]$ such that  $ T(\alpha^*;\lambda) = \max_{\alpha \in (0,1/(1+\lambda)]} T(\alpha;\lambda) $.
\end{asparaenum}
\end{proof}

A consequence of this lemma is that there exists at least one critical point of $T(\cdot\,;\lambda)$ in $(0,1/(1+\lambda)]$. Next, we prove there exists only one.

\begin{proposition}\label{prop:onecritpt}
 Let $\lambda>0$ be a fixed value. Then there is exactly one critical point of $T$ in $(0,1/(1+\lambda)]$.
\end{proposition}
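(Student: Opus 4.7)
The plan is to combine uniqueness with the existence of at least one critical point guaranteed by Proposition~\ref{prop:properties1}\eqref{prop:item5}. For uniqueness, I will prove that \emph{every} critical point of $T(\cdot\,;\lambda)$ in $(0,1/(1+\lambda))$ is a strict local maximum, i.e., $T''(\alpha^*;\lambda)<0$ whenever $T'(\alpha^*;\lambda)=0$. Once this is in hand, uniqueness follows by a standard topological argument: if there were two distinct critical points $\alpha_1^*<\alpha_2^*$, both strict local maxima of the continuous function $T(\cdot\,;\lambda)$, there would have to be a local minimum of $T$ on $(\alpha_1^*,\alpha_2^*)$, producing a critical point that is \emph{not} a local maximum, a contradiction. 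The candidate endpoint $\alpha=1/(1+\lambda)$ is handled separately using the explicit computation in Proposition~\ref{prop:properties1}\eqref{prop:item4}, which shows $T'<0$ there, so it can at most be a boundary extremum and not an interior critical point.

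To establish $T''(\alpha^*;\lambda)<0$, I will work with the integral formula from Lemma~\ref{lem:differentiable},
\[
 T''(\alpha;\lambda) \;=\; \int_0^1 \pdd{K}{\alpha}(\alpha,z;\lambda)\,\ud z,
\]
together with the analogous formula \eqref{eq:Tderivative} for $T'(\alpha;\lambda)$. The idea is to use the constraint $\int_0^1 (\partial K/\partial\alpha)(\alpha^*,z;\lambda)\,\ud z=0$ to rewrite the integrand of $T''$ as
\[
 \pdd{K}{\alpha}(\alpha^*,z;\lambda) \;=\; -\,p(\alpha^*,z;\lambda)\,\pd{K}{\alpha}(\alpha^*,z;\lambda) \;+\; q(\alpha^*,z;\lambda),
\]
for an appropriate weight $p$ and remainder $q$ chosen so that (i) the first term is algebraically tractable and (ii) $q$ has a definite sign on $(0,1)$. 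Substituting and using $T'(\alpha^*;\lambda)=0$ would, with a suitable choice of $p$ (for instance, a rational function in $z$ matched to the radical structure of the denominators of $\partial K/\partial\alpha$ and $\partial^2K/\partial\alpha^2$), leave only $\int_0^1 q\,\ud z$, whose negativity can be read off directly.

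\textbf{Main obstacle.} The cubic numerator $a_0+a_1z+a_2z^2+a_3z^3$ appearing in \eqref{eq:d2G0du02}--\eqref{eq:d2G0du02coeff} \emph{does} change sign on $(0,1)$: one checks that $a_0<0$ throughout the admissible range (using $\alpha\le 1/(1+\lambda)$) while $a_3>0$ for $\alpha$ small, so pointwise negativity of $\partial^2K/\partial\alpha^2$ is unavailable, and the conclusion genuinely depends on the critical-point constraint. The delicate algebra is therefore the selection of $p(\alpha,z;\lambda)$ that both cancels the positive contribution of $a_3 z^3$ against the vanishing $T'$ integral and leaves a remainder whose sign is uniform in $z$. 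As a contingency, if this direct elimination proves intractable, the fallback is to argue via a Smoller--Wasserman-type test: differentiate the first integral \eqref{eq:firstintegral} in the parameter $\alpha$, track the resulting variational equation along the trajectory, and reduce the monotonicity of $T'$ at the critical point to the sign of a boundary term---an approach that reformulates the computation in a phase-plane rather than purely algebraic language and is known to succeed for closely related mean-curvature time maps in \cite{pan2009onedimensional,pan2011timemapI,pan2011timemapII}.
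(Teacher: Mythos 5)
Your high-level framing (uniqueness from $T''<0$ plus existence from Proposition~\ref{prop:properties1}\eqref{prop:item5}) is sound, and the observation that $a_0<0$ on the admissible range is correct. But the diagnosis in your ``Main obstacle'' paragraph is wrong, and it leads you to propose a much harder argument than is actually needed. Knowing $a_0<0$ and $a_3>0$ for small $\alpha$ (when $\lambda<2$) does \emph{not} imply that the cubic $a_0+a_1z+a_2z^2+a_3z^3$ changes sign on $(0,1)$: a positive leading coefficient only forces a sign change somewhere on $(0,\infty)$, which may lie well past $z=1$. On the interval $z\in(0,1)$ the positive contributions are dominated by $a_0$, and in fact the paper proves $\partial^2K/\partial\alpha^2<0$ \emph{pointwise} for $(\alpha,z)\in(0,1)\times(0,1)$. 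It does so by regrouping the numerator not in powers of $z$ but in powers of $\lambda$, writing $p(z,\alpha;\lambda)=b_0(\alpha,z)+b_1(\alpha,z)\,\lambda$ and checking that $b_0\le0$ and $b_1\le0$ on $(0,1)^2$; since $\lambda>0$, this gives $p\le0$, hence $T''(\alpha;\lambda)<0$ for every $\alpha\in(0,1/(1+\lambda))$.

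Because $T$ is then globally strictly concave on the open interval, it has at most one critical point there, and existence from Proposition~\ref{prop:properties1} finishes the proof --- no critical-point constraint, weighted-integrand elimination, or Smoller--Wasserman phase-plane argument is required. Your proposal, as written, is therefore incomplete in a way that matters: the step you flag as ``delicate algebra'' is exactly where the proof must live, you offer two strategies for it without executing either, and the motivation for needing those strategies rests on a false claim about the sign of the integrand. I'd suggest attempting the direct pointwise bound on $p$ before reaching for machinery tied to the critical-point constraint; the decomposition by powers of $\lambda$ (rather than $z$) is the key trick that makes the direct bound tractable.
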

\begin{proof}
 By Proposition \ref{prop:properties1}, we know there exists at least one critical point of $T$ in $(0,1/(1+\lambda)]$. Thus, to complete the proof we only need to prove that there exists at most one critical point of $T$ in $(0,1/(1+\lambda)]$. To do so, we will show that $T''(\alpha;\lambda) <0$ for all $\alpha \in (0,1/(1+\lambda))$. 

From Proposition \ref{prop:nocomplexderivative} and  equation \eqref{eq:d2G0du02} we see that the sign of $\partial^2{K}/\partial{\alpha}^2$ for  $\alpha \in (0,1/(1+\lambda))$ depends on the numerator 
\[
 p(z,\alpha;\lambda)\colonequals a_0(\alpha;\lambda) + a_1(\alpha;\lambda)z +a_2(\alpha;\lambda) z^2 +a_3(\alpha;\lambda) z^3,
\]
where the coefficients are defined in \eqref{eq:d2G0du02coeff}. Therefore, in rearranging we have  
\[
  p(z,\alpha;\lambda) = b_0 + b_1 \lambda, 
\]
where 
\[
\begin{alignedat}{2}
   b_0 & \colonequals  -1 + 5 \alpha^2 z + 3 \alpha^6 z^3 + \alpha (1 +z) -10\alpha^3 (1  + z)z - 7\alpha^5 (1 + z)z^2 +  \alpha^4 (1+4z)(z+4)z,\\
   b_1 & \colonequals -\alpha(1 - z) - 3\alpha^5 (1-z) z^2 + 2\alpha^4(1-z^2)z.
\end{alignedat}
\]
After a long computation it can be shown that $b_0\leq0$ and $b_1 \leq 0$ for all $\alpha$ and $z$ in $(0,1)$, which implies that $p(z,\alpha;\lambda)\leq0$ for all $\alpha$ and $z$ in $(0,1)$. Therefore, $\partial^2{K}/\partial{\alpha}^2<0$ for $(\alpha,z) \in (0,1) \times (0,1)$ which implies that $\partial^2{K}/\partial{\alpha}^2<0$  for $(\alpha,z) \in (0,1/(1+\lambda)) \times (0,1) $. Thus, by equation \eqref{eq:Tpderivative}, $T''(\alpha;\lambda) <0$ for all $\alpha \in (0,1/(1+\lambda))$.
\end{proof}

From Proposition \ref{prop:properties1} and \ref{prop:onecritpt}, we now have that the graph of $T(\alpha;\lambda)$ looks like that of Figure \ref{fig:timemap}. Upon inspection, we see that the number of solutions of $T(\alpha;\lambda)=L$ depends on two values:  the value of $T$ at the end point $\alpha = 1/(1+\lambda)$ and the maximum value of $T$ for $\alpha \in (0,1/(1+\lambda)]$. With this as motivation we define the functions
\begin{alignat}{2}
	M(\lambda) & \colonequals \max \left\{ T(\alpha;\lambda) : \alpha \in (0,1/(1+\lambda)] \right\} ,  \qquad && \mbox{for } \lambda \in (0,\infty),\label{eq:timemapmaxvalue}\\
 	g(\lambda)  & \colonequals T(1/(1+\lambda);\lambda), \qquad & & \mbox{for } \lambda \in (0,\infty), \label{eq:timemaprightendvalue}
\end{alignat}
and examine their properties.

\begin{figure}[!h]
 \centering
 \includegraphics[width=.6\textwidth]{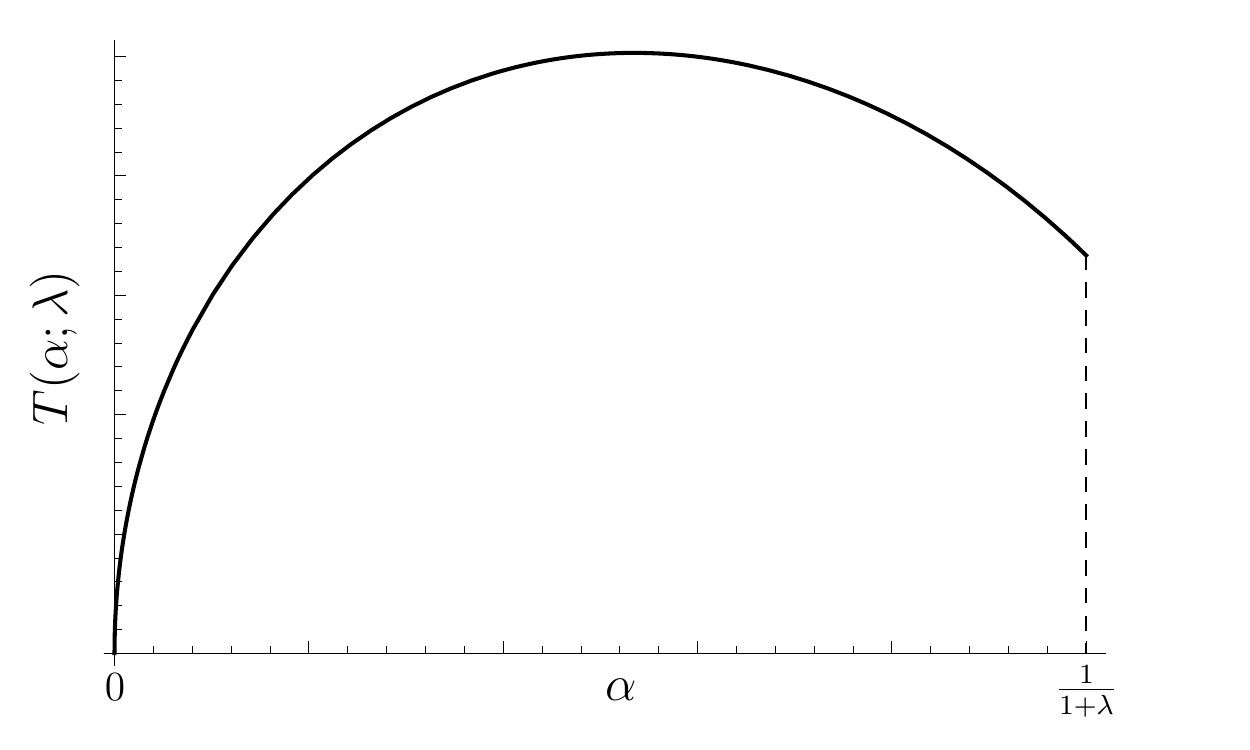}
 \caption{Plot of the time map $T(\alpha;\lambda)$.}\label{fig:timemap}
\end{figure}

\begin{lemma}\label{lem:Rdiff}
   Let $g(\lambda)$ be defined as in \eqref{eq:timemaprightendvalue}. Then $g'(\lambda)$ and $g''(\lambda)$ exist for each $\lambda >0$. Moreover, $g'(\lambda)$  and $g''(\lambda)$ are given by 
\[
 g'(\lambda)  =   \int_{0}^{1} \pd{\phi}{\lambda}(z,\lambda)\ud z, \quad \mbox{and} \quad  g''(\lambda)  =   \int_{0}^{1} \pdd{\phi}{{\lambda}}(z,\lambda)\ud z,
\]
respectively, where 
\[
 \phi(z,\lambda)\colonequals K(1/(1+\lambda),z;\lambda) =   \frac{\lambda}{(1+\lambda)^{3/2}}\frac{z}{\sqrt{1 - z}\sqrt{1 - z + \lambda(1 + z)}}.
\]
\end{lemma}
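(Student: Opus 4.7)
The plan is to follow the same pattern used in Lemma \ref{lem:differentiable}: obtain an explicit formula for the integrand $\phi(z,\lambda)\ce K(1/(1+\lambda),z;\lambda)$, then invoke Theorem 10.39 of~\cite{apostol1974mathematical} twice, once for $g'$ and once for $g''$. The key verifications are that $\phi$, $\partial\phi/\partial\lambda$ and $\partial^2\phi/\partial\lambda^2$ exist pointwise for $z\in(0,1)$ and $\lambda>0$, and that on every compact subinterval $[\lambda_1,\lambda_2]\subset(0,\infty)$ they are dominated by functions in $L^1(0,1)$. Note that because the evaluation point $\alpha=1/(1+\lambda)$ moves with $\lambda$, we cannot simply quote Lemma \ref{lem:differentiable}; we must redo the differentiation under the integral sign for a function of $\lambda$ alone.

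First I would carry out the substitution $\alpha=1/(1+\lambda)$ in the definition \eqref{eq:timemap} of $K$. Using $1-\alpha=\lambda/(1+\lambda)$, the numerator $(1-\alpha z)(1-\alpha)-\lambda\alpha(1-z)$ collapses to $\lambda^2 z/(1+\lambda)^2$, while the radicand $2(1-\alpha z)(1-\alpha)-\lambda\alpha(1-z)$ simplifies to $\lambda[(1-z)+\lambda(1+z)]/(1+\lambda)^2$. Combining these with the prefactor $\sqrt{\alpha/\lambda}=1/\sqrt{\lambda(1+\lambda)}$ yields exactly the stated expression
\[
\phi(z,\lambda)=\frac{\lambda}{(1+\lambda)^{3/2}}\,\frac{z}{\sqrt{1-z}\sqrt{1-z+\lambda(1+z)}}.
\]
From this closed form, well-definedness of $g(\lambda)=\int_0^1\phi(z,\lambda)\,dz$ is immediate: for fixed $\lambda>0$, the second square root is bounded below by $\sqrt{\lambda}>0$ uniformly in $z\in[0,1]$, so $|\phi(z,\lambda)|\le C(\lambda)(1-z)^{-1/2}\in L^1(0,1)$.

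Next I would compute $\partial\phi/\partial\lambda$ using the product and chain rules. The differentiation produces three types of terms: (i) a derivative of the prefactor $\lambda(1+\lambda)^{-3/2}$, which is smooth in $\lambda$; (ii) the unchanged $z/\sqrt{1-z}$ factor; and (iii) a derivative of $[1-z+\lambda(1+z)]^{-1/2}$, which brings down one more power of that quantity. The crucial observation is that for $\lambda\in[\lambda_1,\lambda_2]$, we have the uniform lower bound $1-z+\lambda(1+z)\ge\lambda_1(1+z)\ge\lambda_1>0$ on $[0,1]$, so the factor $[1-z+\lambda(1+z)]^{-3/2}$ is uniformly bounded. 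Hence there is a constant $C_1=C_1(\lambda_1,\lambda_2)$ with
\[
\left|\pd{\phi}{\lambda}(z,\lambda)\right|\le \frac{C_1}{\sqrt{1-z}},\qquad z\in(0,1),\ \lambda\in[\lambda_1,\lambda_2],
\]
and the right-hand side lies in $L^1(0,1)$. Apostol's Theorem 10.39 then gives differentiability of $g$ and the first derivative formula.

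For the second derivative I would repeat the calculation, differentiating once more. This produces terms that contain at worst a factor $[1-z+\lambda(1+z)]^{-5/2}$, which on $[\lambda_1,\lambda_2]$ is again bounded above by a constant depending only on $\lambda_1,\lambda_2$; the $(1-z)^{-1/2}$ singularity is unchanged. Hence $|\partial^2\phi/\partial\lambda^2|\le C_2(1-z)^{-1/2}\in L^1(0,1)$, and a second application of Theorem 10.39 of~\cite{apostol1974mathematical} yields the second-derivative formula. The main obstacle is bookkeeping in the explicit differentiations; conceptually, once the closed form for $\phi$ is in hand, the only thing that could go wrong is a non-integrable singularity, and the lower bound $1-z+\lambda(1+z)\ge\lambda_1$ rules this out on every compact $\lambda$-interval.
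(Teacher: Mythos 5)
Your proposal is correct and follows essentially the same approach as the paper: simplify $K$ at $\alpha=1/(1+\lambda)$ to the explicit form of $\phi$, establish a uniform lower bound on $1-z+\lambda(1+z)$ on compact $\lambda$-intervals so that $\partial\phi/\partial\lambda$ and $\partial^2\phi/\partial\lambda^2$ are dominated by constant multiples of $(1-z)^{-1/2}\in L^1(0,1)$, and invoke Theorem 10.39 of Apostol twice. The only cosmetic difference is that the paper carries out the explicit derivative computations and uses the lower bound $\min\{1+\lambda,2\lambda\}$ where you use $\lambda_1$; both are valid.
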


\begin{proof}
 First, $1 - z + \lambda(1 + z) = 1 + \lambda + (-1 + \lambda) z$, which is linear in $z$, and hence,
\be\label{eq:minforR}
 1 - z + \lambda(1 + z) \geq \min\left\{1 +\lambda,2\lambda \right\} >0.
\ee
Then, from the definition of  $g(\lambda)$, we have
\begin{equation}\label{eq:TMendpointbound}
\begin{aligned}
  0< g(\lambda)  & = \frac{\lambda}{(1+\lambda)^{3/2}} \int_0^1 \frac{z}{\sqrt{1 - z}\sqrt{1 - z + \lambda(1 + z)}} \ud z \\
	& \leq  \frac{\lambda}{(1+\lambda)^{3/2}}\frac{1}{\sqrt{\min\left\{1 +\lambda,2\lambda \right\}}} \int_0^1 \frac{1}{\sqrt{1 - z}} \ud z < \infty,
\end{aligned}
\end{equation}
for each $\lambda >0$. Moreover, for $\lambda>0$,
\[
    \pd{\phi}{\lambda}=  -\frac{(\lambda^2-1) z + ( \lambda^2 - \lambda +1) z^2}{(1 + \lambda)^{5/2} \sqrt{ 1 - z} (1 - z + \lambda(1 + z))^{3/2}}
\]
and
\[
    \pdd{\phi}{\lambda} =  \frac{( 2 \lambda^3- 6 \lambda -4 ) z + (4 \lambda^3- 4 \lambda^2 - 2 \lambda  + 6) z^2 + (2 \lambda^3- 4 \lambda^2+ 7 \lambda-2) z^3}{(1 + \lambda)^{7/2} \sqrt{ 1 - z} (1 - z + \lambda(1 + z))^{5/2}}.
\]
 Thus, from inequality \eqref{eq:minforR}, we obtain for each $\lambda$ in $[a,b]$, where $0<a < b$,
\[
   \left|\pd{\phi}{\lambda}\right| \leq   \frac{2 b^2 + b +2}{(1 + a)^{5/2}(\min\left\{1 +a,2a \right\})^{3/2}}\frac{1}{ \sqrt{ 1 - z}} \leq \frac{C_1}{\sqrt{1-z}}
\]
and
\[
   \left|\pdd{\phi}{\lambda}\right| \leq   \frac{8 b^3 + 8 b^2 + 15 b+ 12 }{(1 + a)^{7/2} (\min\left\{1 +a,2a \right\})^{5/2}}\frac{1}{ \sqrt{ 1 - z}}\leq \frac{C_2}{\sqrt{1-z}}.
\]
Here $C_1$ and $C_2$ are positive constants independent of $\lambda$ and $z$. Since $(1-z)^{-1/2} \in L^1(0,1)$, by Theorem 10.39 in~\cite{apostol1974mathematical} we have our result.
\end{proof}

To simplify $g$, we have from the definition 
\[
    g(\lambda)   = \frac{\lambda}{(1+\lambda)^{3/2}}\int_{0}^{1}  \frac{z}{\sqrt{1 - z}\sqrt{1 - z + \lambda(1 + z)}}\ud z.
\]
Thus,
\begin{equation}\label{eq:Rat1}
\begin{aligned}
  g(1)  & = \frac{1}{4}\int_{0}^{1}  \frac{z}{\sqrt{1 - z}}\ud z = \frac{1}{3}.
\end{aligned}
\end{equation}
For $\lambda \neq 1$, the function 
\begin{equation}\label{eq:Rmunot1}
\begin{aligned}
    g(\lambda)  & = \frac{\lambda\sqrt{1-\lambda}}{(1+\lambda)^{3/2}}\int_{0}^{1}  \frac{z}{\sqrt{((1 - \lambda) z - 1)^2 - \lambda^2}}\ud z \\
    	& = -\frac{\lambda}{(1-\lambda^2)^{3/2}}\int_{ 1}^{\lambda}  \frac{-s + 1}{\sqrt{s^2 - \lambda^2}}\ud s \\ 
	& = -\frac{\lambda}{(1-\lambda^2)^{3/2}}\begin{cases}
	 \int_{ \lambda}^{1}  (s - 1)(s^2 - \lambda^2)^{-1/2} \ud s,&\mbox{for } \lambda \in (0,1),  \vspace{1mm} \\
	  i^{-1}\int_{ 1}^{\lambda}  (1-s)(\lambda^2-s^2)^{-1/2}\ud s,&\mbox{for } \lambda\in(1,\infty)
	\end{cases}\\
	& = \begin{cases}
	- \lambda(1-\lambda^2)^{-3/2}\left(\sqrt{1-\lambda^2} - \log[(1+\sqrt{1-\lambda^2})/\lambda]\right),&\mbox{for } 0<\lambda<1, \vspace{1mm}\\
	 \lambda  (\lambda^2-1)^{-3/2} \left(\sqrt{\lambda^2-1} - \sec^{-1}(\lambda)\right),&\mbox{for } 1<\lambda<\infty,
	\end{cases}
\end{aligned}
\end{equation}
and we have by equations \eqref{eq:Rat1}--\eqref{eq:Rmunot1} that
\begin{equation}\label{eq:Ranalytic}
 g(\lambda) = 
 \begin{cases}
	\displaystyle \lambda(1-\lambda^2)^{-3/2}\left(\log[(1+\sqrt{1-\lambda^2})/\lambda] - \sqrt{1-\lambda^2} \right),&\mbox{for } 0<\lambda<1, \\
	1/3, & \mbox{for } \lambda=1,\\
	 \displaystyle \lambda  (\lambda^2-1)^{-3/2}\left(\sqrt{\lambda^2-1} - \mathrm{arcsec}\ {\lambda}\right),&\mbox{for } 1<\lambda<\infty.
	\end{cases}
\end{equation} 
Now, we may prove the following properties of $g(\lambda)$.
\begin{proposition}\label{prop:Rproperties}
 Let $g(\lambda)$ be defined as in \eqref{eq:timemaprightendvalue}. Then
\begin{enumerate}[\normalfont(i) ]
 \item $g(\lambda) > 0$ for all $\lambda$.
 \item\label{item:Rto0asmuto0} $g(\lambda) \to 0$ as $\lambda \to 0^+$;
 \item\label{item:Rto0asmutoinfty} $g(\lambda) \to 0$ as $\lambda \to +\infty$;
 \item\label{item:Rprimetoinftyasmuto0}  $g'(\lambda) \to +\infty$ as $\lambda \to 0^+$;
  \item\label{item:Rprimetoinftyasmutoinfty}  $g'(\lambda) \to 0^-$ as $\lambda \to +\infty$;
 \item\label{item:alphamax} there exists exactly one critical point $c$ of $g$ for $\lambda>0$. Moreover, $c \in (0,1)$ and  $g(c) = L^*<\infty$, where
 \begin{equation}\label{eq:Rmax}
  L^* \colonequals \max_{\lambda>0}g(\lambda).
  \end{equation}
\end{enumerate}
\end{proposition}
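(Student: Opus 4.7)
The plan is to prove (i)--(v) by direct analysis of the closed form \eqref{eq:Ranalytic} together with the integral representations of $g$, $g'$, and $g''$ from Lemma \ref{lem:Rdiff}, and then deduce (vi) from the boundary behavior plus a sign analysis.

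For (i), positivity is immediate since $g(\lambda)=\int_0^1\phi(z,\lambda)\,dz$ has a strictly positive integrand. For (ii), I would use the $\lambda\in(0,1)$ case of \eqref{eq:Ranalytic}: as $\lambda\to 0^+$, the bracket $\log[(1+\sqrt{1-\lambda^2})/\lambda]-\sqrt{1-\lambda^2}$ is asymptotic to $\log(2/\lambda)-1$, while the prefactor $\lambda(1-\lambda^2)^{-3/2}$ vanishes linearly, so $g(\lambda)\sim\lambda\log(1/\lambda)\to 0$. For (iii), the $\lambda>1$ case of \eqref{eq:Ranalytic} gives $\sqrt{\lambda^2-1}-\sec^{-1}\lambda\sim\lambda-\pi/2$ against a prefactor $\sim\lambda^{-2}$, so $g(\lambda)\sim\lambda^{-1}\to 0$.

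For (iv) and (v), I would differentiate \eqref{eq:Ranalytic} explicitly in each regime. Writing $p(\lambda):=\log[(1+\sqrt{1-\lambda^2})/\lambda]-\sqrt{1-\lambda^2}$ for the first branch, a short calculation yields $p'(\lambda)=-\sqrt{1-\lambda^2}/\lambda$ and thus
\begin{equation*}
g'(\lambda)=(1-\lambda^2)^{-5/2}\bigl[(1+2\lambda^2)p(\lambda)-(1-\lambda^2)^{3/2}\bigr],\qquad 0<\lambda<1,
\end{equation*}
with a parallel formula on $\lambda>1$. Since $p(\lambda)\sim\log(1/\lambda)$, this immediately gives (iv). Using $\sqrt{\lambda^2-1}-\sec^{-1}\lambda=\lambda-\pi/2+O(\lambda^{-1})$ in the analogous expression for $\lambda>1$ yields $g'(\lambda)=-\lambda^{-2}+O(\lambda^{-3})\to 0^-$, proving (v).

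For (vi), existence of a critical point follows from the intermediate value theorem on $g'$ using (iv) and (v). To pin the critical point into $(0,1)$, I would verify $g'(\lambda)<0$ for all $\lambda\geq 1$ by reformulating (with $u=\sqrt{\lambda^2-1}$) as the pointwise inequality $(3+2u^2)(u-\arctan u)>u^3$ for $u>0$; this reduces, upon writing $u-\arctan u=\int_0^u t^2/(1+t^2)\,dt$ and differentiating both sides in $u$, to the elementary inequality $(3+2u^2)^2\geq(1+u^2)(9+2u^2)$, i.e., $u^2+2u^4\geq 0$. For uniqueness, my plan is to analyze $g''$ via the integrand $\partial^2\phi/\partial\lambda^2$ from Lemma \ref{lem:Rdiff}: its cubic-in-$z$ numerator reduces to $-2z(z-1)(z-2)<0$ at $\lambda=0$ but is dominated by $2\lambda^3 z(1+z)^2>0$ for large $\lambda$. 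I would integrate the cubic in $z$ explicitly (the weight $[(1-z)(1-z+\lambda(1+z))^5]^{-1/2}$ admits an elementary antiderivative) to obtain a closed form in $\lambda$ and show it crosses zero exactly once. This forces $g'$ to be strictly decreasing then strictly increasing; combined with $g'(0^+)=+\infty$ and $g'(\infty)=0^-$, $g'$ has exactly one zero.

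The main obstacle is establishing the single sign change of $g''$. While the $z$-integration is elementary, the resulting expression in $\lambda$ may not be obviously monotone across its sign change and could require a careful third-derivative analysis to rule out spurious oscillations. A fallback is to bypass $g''$ entirely and prove uniqueness directly on the equation $(1+2\lambda^2)p(\lambda)=(1-\lambda^2)^{3/2}$ (and its $\lambda>1$ counterpart), for instance by showing that $\lambda\mapsto(1+2\lambda^2)p(\lambda)/(1-\lambda^2)^{3/2}$ is strictly monotone and crosses $1$ exactly once.
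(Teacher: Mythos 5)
Your decomposition mirrors the paper's for parts (i)--(v), and your explicit formulas check out: the identity $p'(\lambda)=-\sqrt{1-\lambda^2}/\lambda$ is correct, and your expression $g'(\lambda)=(1-\lambda^2)^{-5/2}[(1+2\lambda^2)p(\lambda)-(1-\lambda^2)^{3/2}]$ is algebraically equivalent to the paper's. (The paper establishes (ii) and (iii) from the uniform bound $g(\lambda)\le \lambda(1+\lambda)^{-3/2}\min\{1+\lambda,2\lambda\}^{-1/2}\int_0^1(1-z)^{-1/2}\,dz$ rather than from the closed form, but both work.)

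For (vi), the substitution $u=\sqrt{\lambda^2-1}$ and the claim that $g'<0$ on $(1,\infty)$ reduces to $(3+2u^2)(u-\arctan u)>u^3$ is correct and is a nicer, more self-contained route than the paper's. The chain of reasoning you sketch does close: writing both sides as functions vanishing at $u=0$, comparing derivatives gives $\frac{u^2}{1+u^2}$ versus $\frac{u^2(9+2u^2)}{(3+2u^2)^2}$, and the cross-multiplied form is exactly $(3+2u^2)^2\ge(1+u^2)(9+2u^2)$, i.e.\ $u^2+2u^4\ge 0$. You should, however, state explicitly that this comparison is of $u-\arctan u$ against $u^3/(3+2u^2)$ (both vanishing at $u=0$), since differentiating the original product form $(3+2u^2)(u-\arctan u)$ directly does \emph{not} eliminate $\arctan u$ and would need a second pass. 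Also note the endpoint: at $\lambda=1$ (i.e.\ $u=0$) your inequality degenerates to $0>0$, so you still need the limit $g'(1)=\lim_{\lambda\to1}g'(\lambda)=-1/15<0$ (which the paper computes) to close $[1,\infty)$ rather than $(1,\infty)$.

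The genuine gap is the uniqueness step. The paper's route, once $g'<0$ on $[1,\infty)$ is secured, is simply to show $g''<0$ on $(0,1)$: then $g'$ is strictly decreasing on $(0,1)$, and since $g'(0^+)=+\infty$ and $g'(1^-)<0$, it has exactly one zero there. Your plan instead seeks a \emph{global} sign-change analysis of $g''$ on all of $(0,\infty)$\emdash integrating the cubic numerator explicitly and showing a single crossing\emdash which is strictly harder than what is needed and which you yourself flag as a potential dead end (``could require a careful third-derivative analysis to rule out spurious oscillations''). This is a proposed strategy, not a proof, and as written does not establish uniqueness. The fix is to restrict attention to $(0,1)$, where concavity of $g$ is all that is required; your ``fallback'' of analyzing $(1+2\lambda^2)p(\lambda)/(1-\lambda^2)^{3/2}$ is also confined to $(0,1)$ and is a reasonable alternative, but again is only a plan. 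In short: (i)--(v) are fine, the $(1,\infty)$ negativity argument is correct modulo the $\lambda=1$ endpoint, and the uniqueness on $(0,1)$ still needs to be carried out.
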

\begin{proof}
\begin{enumerate}[\normalfont(i) ]
 \item Follows from inequality \eqref{eq:TMendpointbound}.
 \item From inequality \eqref{eq:TMendpointbound}, we see  $g(\lambda) \to 0$ as $\lambda \to 0^+$.
 
 \item From inequality \eqref{eq:TMendpointbound}, we see  $g(\lambda) \to 0$ as $\lambda \to +\infty $.
 
 \item From equation \eqref{eq:Ranalytic},
\[
  g'(\lambda)  =  \frac{-(2 + \lambda^2) (1 - \lambda^2 + \sqrt{1 - \lambda^2}) + (1 + 2 \lambda^2) (1 + \sqrt{1 - \lambda^2}) \log[(1+\sqrt{1-\lambda^2})/\lambda]}{(1 - \lambda^2)^{5/2} (1 +\sqrt{1 - \lambda^2})},
\]
for $\lambda\in(0,1)$, which, because $\log[(1+\sqrt{1-\lambda^2})/\lambda] \to+\infty$ as $\lambda\to 0^+$, implies $g'(\lambda)\to+\infty$ as $\lambda\to 0^+$.

\item From equation \eqref{eq:Ranalytic},
\[
  g'(\lambda)  =  \frac{(1 + 2 \lambda^2) \mathrm{arcsec}\ \lambda - \sqrt{\lambda^2-1} (2 + \lambda^2)}{(\lambda^2-1)^{5/2}} =   - \frac{1}{\lambda^{2}} + \bigoh(\lambda^{-3}) \mbox{ as } \lambda \to +\infty,
\]
for $\lambda>1$, which implies $\lim_{\lambda \to +\infty}g'(\lambda) = 0.$

\item From equation \eqref{eq:Ranalytic}, we deduce that $g' <0$ on $(1,\infty)$. Furthermore, by Lemma \ref{lem:Rdiff},
\be\label{eq:derivativeRat1}
 g'(1) = \lim_{\lambda\to 1^+}\frac{(1 + 2 \lambda^2) \mathrm{arcsec}\ \lambda - \sqrt{\lambda^2-1} (2 + \lambda^2)}{(\lambda^2-1)^{5/2}} = -\frac{1}{15}<0,
\ee
and we have  $g' <0$ on $[1,\infty)$. Therefore, $g'$ cannot have a critical point in $[1,\infty)$. Next, from part \eqref{item:Rprimetoinftyasmuto0} of this proposition, inequality \eqref{eq:derivativeRat1} and Lemma \ref{lem:Rdiff}, we have by the intermediate value theorem for derivatives that there exists a $c$ in $ (0,1)$ such that $g'(c)=0$. Then using equation \eqref{eq:Ranalytic} it can be shown that $g''(\lambda)<0$ for $\lambda\in (0,1)$, which implies that $c$ corresponds to a local max and must be unique. Also, since $g(\lambda)>0$ and $g'(\lambda)<0$ for $\lambda \in [1,\infty)$, $g(c) > g(1) \geq g(\lambda) >0$, and $c$ corresponds to a global max, i.e., $g(c) =\max_{\lambda>0}g(\lambda)$.
\end{enumerate}
\end{proof}

From this analysis we have that the graph of $g(\lambda)$ is given by Figure \ref{fig:rightendpoint}.  Furthermore, with the help of Proposition \ref{prop:Rproperties} we can numerically estimate $L^*$ and find $L^* \approx 0.3499676.$
\begin{figure}[!h]
 \centering
 \includegraphics[width=.6\textwidth]{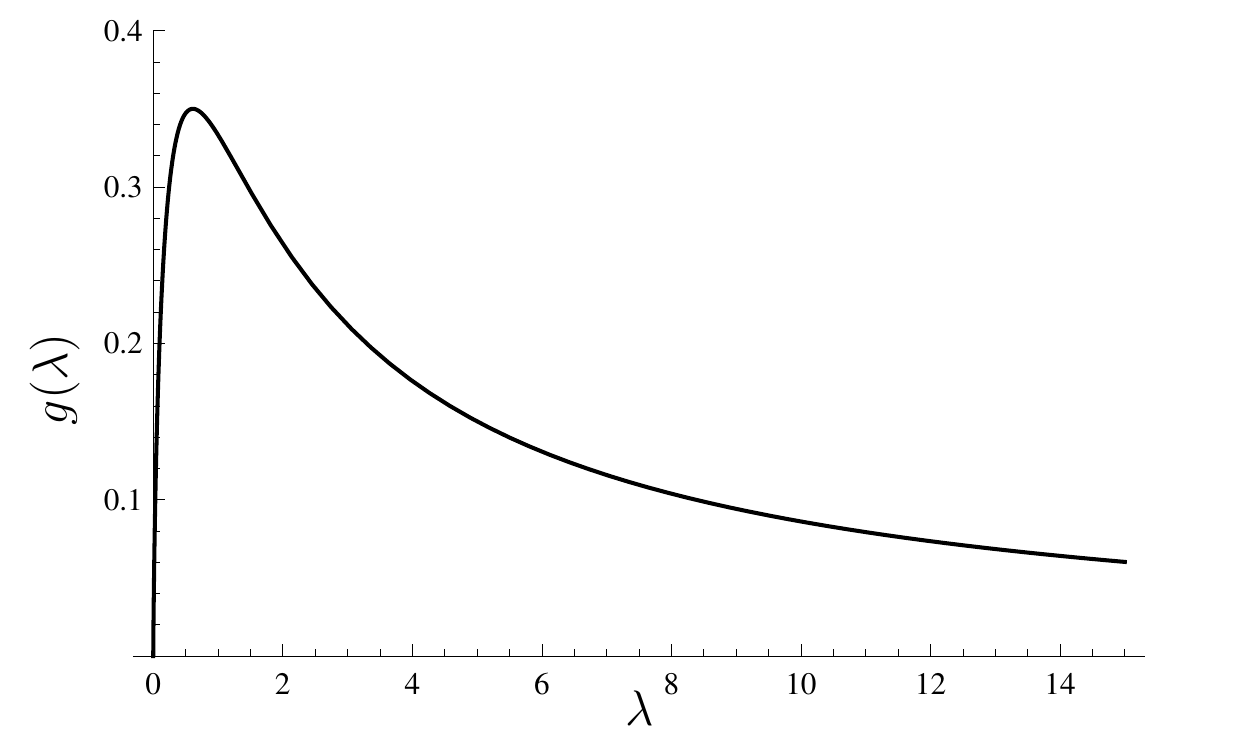}
 \caption{Graph of the right end point, $g(\lambda)$, of the time map $T(\alpha; \lambda)$.}\label{fig:rightendpoint}
\end{figure}

Next, we look at the properties of $M(\lambda)$.
\begin{proposition}\label{prop:Mproperties}
 Assume that $M(\lambda)$ is defined as in \eqref{eq:timemapmaxvalue}. Then 
\begin{enumerate}[\normalfont(i) ]
 \item\label{Mprop:item1} $M(\lambda)$ is well-defined and continuous for $\lambda>0$;
 \item\label{Mprop:item2} $M(\lambda) \to  0^+$ as $\lambda \to +\infty$;
 \item\label{Mprop:item3} $M(\lambda) \to  +\infty$ as $\lambda \to 0^+$.
\end{enumerate}
\end{proposition}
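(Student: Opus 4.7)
The three claims are handled separately.

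\textbf{Well-definedness and continuity (i).} Well-definedness is immediate from part \eqref{prop:item5} of Proposition \ref{prop:properties1}. For continuity at an arbitrary $\lambda_0>0$, I would rescale the argument so as to obtain a fixed domain: set $\beta \ce (1+\lambda)\alpha$ and define
\[
\tilde T(\beta;\lambda) \ce T\!\left(\tfrac{\beta}{1+\lambda};\lambda\right), \qquad \tilde T(0;\lambda)\ce 0,
\]
so that $M(\lambda)=\max_{\beta\in[0,1]}\tilde T(\beta;\lambda)$. The main step is to show $\tilde T$ is jointly continuous on $[0,1]\times[\lambda_0/2,2\lambda_0]$. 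For this, write out $\tilde T$ via \eqref{eq:timemap}; the integrand is continuous in $(\beta,\lambda,z)$ on $[0,1]\times[\lambda_0/2,2\lambda_0]\times(0,1)$, and using inequality \eqref{eq:inequalitylinearinu0} together with the lower bound $\lambda\ge\lambda_0/2$ one finds that the integrand is dominated by a constant multiple of $(1-z)^{-1/2}\in L^1(0,1)$. Dominated convergence then yields joint continuity, and since $[0,1]\times[\lambda_0/2,2\lambda_0]$ is compact, $\tilde T(\cdot;\lambda)\to\tilde T(\cdot;\lambda_0)$ uniformly in $\beta$ as $\lambda\to\lambda_0$, giving
\[
|M(\lambda)-M(\lambda_0)|\le\sup_{\beta\in[0,1]}|\tilde T(\beta;\lambda)-\tilde T(\beta;\lambda_0)|\longrightarrow 0.
\]

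\textbf{Limit $M(\lambda)\to 0^+$ as $\lambda\to\infty$ (ii).} Apply \eqref{eq:welldefinedT}: for every admissible $\alpha$,
\[
0<T(\alpha;\lambda)\le\frac{\sqrt{1+\lambda}}{\lambda}\log\!\left(\frac{1+\sqrt{\alpha}}{1-\sqrt{\alpha}}\right).
\]
The right-hand side is increasing in $\alpha$, and $\alpha\le 1/(1+\lambda)$ gives $\sqrt{\alpha}\le(1+\lambda)^{-1/2}$, so
\[
0<M(\lambda)\le\frac{\sqrt{1+\lambda}}{\lambda}\log\!\left(\frac{\sqrt{1+\lambda}+1}{\sqrt{1+\lambda}-1}\right).
\]
Since $\log\!\bigl((1+u)/(1-u)\bigr)=2u+\bigoh(u^3)$ with $u=(1+\lambda)^{-1/2}\to 0$, the right-hand side is $2/\lambda+\bigoh(\lambda^{-2})\to 0^+$.

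\textbf{Limit $M(\lambda)\to+\infty$ as $\lambda\to 0^+$ (iii).} The plan is to compare with a convenient test value. For $\lambda\in(0,1)$ one has $1/2<1/(1+\lambda)$, hence $M(\lambda)\ge T(1/2;\lambda)$. Specializing \eqref{eq:timemap},
\[
T(1/2;\lambda)=\frac{1}{\sqrt{2\lambda}}\int_0^1\frac{(1-z/2)/2-\lambda(1-z)/2}{\sqrt{1-z}\,\sqrt{(1-z/2)-\lambda(1-z)/2}}\,\ud z.
\]
For $\lambda\in(0,1)$ the quantity under the second square root is linear in $z$ with endpoint values $1-\lambda/2$ and $1/2$, so it is bounded below by $1/2$ uniformly in $z$. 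The integrand is therefore dominated by a constant multiple of $(1-z)^{-1/2}\in L^1(0,1)$, and converges pointwise as $\lambda\to 0^+$ to $\sqrt{1-z/2}/(2\sqrt{1-z})$. Dominated convergence then gives
\[
\int_0^1\frac{(1-z/2)/2-\lambda(1-z)/2}{\sqrt{1-z}\,\sqrt{(1-z/2)-\lambda(1-z)/2}}\,\ud z\longrightarrow\int_0^1\frac{\sqrt{1-z/2}}{2\sqrt{1-z}}\,\ud z\equalscolon I_0>0,
\]
so $T(1/2;\lambda)\sim I_0/\sqrt{2\lambda}\to+\infty$, and hence $M(\lambda)\to+\infty$.

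\textbf{Main obstacle.} The only nontrivial step is joint continuity of $\tilde T$ in (i): the natural domain $(0,1/(1+\lambda)]$ depends on $\lambda$ and the integrand of $T$ has singularities at $z=1$ and (as $\lambda\to 0$) at $z$ where the radicand vanishes, so the rescaling $\beta=(1+\lambda)\alpha$ plus a dominating function that is uniform on a compact $\lambda$-neighborhood of $\lambda_0>0$ is essential. Parts (ii) and (iii) reduce quickly to explicit bounds already assembled in Section 2 and Lemma \ref{lem:differentiable}.
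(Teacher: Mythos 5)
Your proof is correct and follows the same overall strategy as the paper for all three parts: well-definedness from Proposition~\ref{prop:properties1}\eqref{prop:item5}, an upper bound via \eqref{eq:welldefinedT} for (ii), and a lower bound via evaluating $T$ at a convenient test value of $\alpha$ for (iii) (you use $\alpha = 1/2$, restricting to $\lambda<1$; the paper uses $\alpha = 1/(2(1+\lambda))$, which is admissible for all $\lambda$ — both work equally well for the limit $\lambda\to 0^+$). Where you genuinely diverge is in part (i): the paper asserts continuity of $M$ in a single sentence from continuity of $T$ on $(0,1/(1+\lambda)]\times(0,\infty)$, which glosses over the fact that the domain of maximization varies with $\lambda$ and is only half-open. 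Your rescaling $\beta=(1+\lambda)\alpha$ to the fixed compact interval $[0,1]$, with the extension $\tilde T(0;\lambda)=0$ justified by Proposition~\ref{prop:properties1}\eqref{prop:item2}, and the dominated-convergence argument for joint continuity on $[0,1]\times[\lambda_0/2,2\lambda_0]$ give a complete proof of a step the paper takes for granted; this is the more careful route and buys you uniform convergence, from which continuity of the maximum follows cleanly.
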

\begin{proof}
\begin{enumerate}[\normalfont(i) ]
 \item By Proposition \ref{prop:properties1}\eqref{prop:item5}, we have that $M(\lambda)$ is well defined for each $\lambda >0$. Furthermore, by the definition it is easy to see that $T(\alpha;\lambda)$ is continuous for each point in $(0,1/(1+\lambda)]\times(0,\infty)$ which implies that $M(\lambda)$ is continuous on $(0,\infty)$.
 \item For any $\lambda>0$ and $\alpha \in (0,1/(1+\lambda)]$ we have by inequality \eqref{eq:welldefinedT} and part \eqref{prop:item1} of Proposition \ref{prop:properties1} that
\[
\begin{aligned}
 0 \leq T(\alpha;\lambda) &\leq  \frac{\sqrt{\alpha}\sqrt{1+\lambda}}{\lambda} \int_{0}^{1} \frac{1}{\sqrt{1- z}}\frac{1}{\sqrt{1- (1+\lambda)^{-1} z}}\ud z  \leq \frac{2\sqrt{1 + \lambda}}{\lambda}   \log \left(\frac{1 + \sqrt{1 + \lambda}}{\sqrt\lambda}\right),
\end{aligned}
\]
which implies
\[
 0 \leq M(\lambda) \leq \frac{2\sqrt{1 + \lambda}}{\lambda}   \log \left(\frac{1 + \sqrt{1 + \lambda}}{\sqrt\lambda}\right), \quad \mbox{for} \quad \lambda>0.
\]
 Now, the right hand side has the far-field behavior  $2/\lambda + \bigoh(\lambda^{-2})$ as $\lambda \to +\infty$. Therefore, $M(\lambda) \to 0^+$ as $\lambda \to +\infty$. 

\item For any $\lambda>0$ and $\alpha \in (0,1/(1+\lambda)]$ we have 
\[
\begin{aligned}
 M(\lambda) & \geq T\left(\frac{1}{2(1+\lambda)};\lambda\right) \\
	& = \frac{1}{4 \sqrt{\lambda}(1+\lambda)^{3/2}} \int_{0}^{1} \frac{2 + 4 \lambda - z + 2 \lambda^2 (1 + z)}{\sqrt{1- z} \sqrt{2 - \lambda (-5 + z) - z + \lambda^2 (3 + z)}}\ud z \\	
 	& \geq \frac{1 + 4 \lambda + 2 \lambda^2}{2 \sqrt{\lambda} (1 + \lambda)^{3/2} \sqrt{ 2 + 5 \lambda + 4 \lambda^2}}  \to +\infty  \quad \mbox{ as } \lambda \to 0^+.
\end{aligned}
\]
\end{enumerate}
\end{proof}

Now, we can prove the following lemma:

\begin{lemma}\label{lemma:llbmt}
 Let $T$ and $L^*$ be defined as \eqref{eq:timemap} and \eqref{eq:Rmax}, respectively. Then 
 \begin{enumerate}[\normalfont(i) ]
 \item\label{item1:strictlydecreasing} for fixed $\alpha$, $T(\alpha;\lambda)$ is strictly decreasing with respect to $\lambda$, which implies that $M$ is strictly decreasing;
 \item\label{item:muunderstars} if $0<L<L^*$, then there exist two unique constants $\lambda_*$ and $\lambda_{**}$, where $\lambda_*< \lambda_{**}$, such that $g(\lambda_*) = g(\lambda_{**}) =L$;
 \item\label{item:muoverstars} for each $L$ in $(0,\infty)$, there exists a unique $\lambda^*$ such that $M(\lambda^*) =  L$;
 \item\label{item3:starstarstar} if $0<L<L^*$, then $\lambda_* < \lambda_{**} < \lambda^*$, where the values $\lambda_*, \lambda_{**}$ and $\lambda^*$ are from parts \eqref{item:muunderstars} and  \eqref{item:muoverstars}.
\end{enumerate}
\end{lemma}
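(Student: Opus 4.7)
The plan is to dispatch the four claims in the order listed, using only results already established for the time map $T$, the right-endpoint value $g$, and the maximum value $M$. For part (\ref{item1:strictlydecreasing}), the cleanest route is through the first integral \eqref{eq:firstintegral}, which for the IVP \eqref{eq:ode_ivp_tm} gives
\[
|u'(u;\lambda)|^{2} \;=\; \Bigl[1-\lambda\,\tfrac{\alpha-u}{(1-u)(1-\alpha)}\Bigr]^{-2}-1.
\]
Fixing $\alpha$ and any $u\in[0,\alpha)$, the quantity $(\alpha-u)/[(1-u)(1-\alpha)]$ is a strictly positive constant in $\lambda$, so the bracket is strictly decreasing in $\lambda$ while remaining positive by the restriction $\alpha\le 1/(1+\lambda)$ of Theorem \ref{eq:restriction_thm}. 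Hence $|u'(u;\lambda)|$ is strictly increasing in $\lambda$ at each such $u$. Writing the time map as $T(\alpha;\lambda)=\int_0^\alpha du/|u'(u;\lambda)|$ (equivalently the $z$-integral in \eqref{eq:timemap} after $u=\alpha z$), the integrand is pointwise strictly decreasing in $\lambda$, which transfers to strict decrease of $T(\alpha;\lambda)$ in $\lambda$. For $M$, take $\lambda_1<\lambda_2$; since $(0,1/(1+\lambda_2)]\subset(0,1/(1+\lambda_1)]$, choosing $\alpha_2$ to realize $M(\lambda_2)$ gives $M(\lambda_2)=T(\alpha_2;\lambda_2)<T(\alpha_2;\lambda_1)\le M(\lambda_1)$.

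Parts (\ref{item:muunderstars}) and (\ref{item:muoverstars}) are immediate consequences of earlier propositions. For (\ref{item:muunderstars}), Proposition \ref{prop:Rproperties} shows that $g$ is continuous and positive on $(0,\infty)$, has a unique critical point $c\in(0,1)$ which is a global maximum with $g(c)=L^*$, and tends to $0$ at $\lambda=0^+$ and $\lambda=+\infty$. This forces $g$ to be strictly increasing on $(0,c)$ and strictly decreasing on $(c,\infty)$, so for each $L\in(0,L^*)$ the intermediate value theorem, applied on each monotone branch, yields exactly one $\lambda_*\in(0,c)$ and one $\lambda_{**}\in(c,\infty)$ with $g(\lambda_*)=g(\lambda_{**})=L$. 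For (\ref{item:muoverstars}), $M$ is continuous by Proposition \ref{prop:Mproperties}(\ref{Mprop:item1}), strictly decreasing by (\ref{item1:strictlydecreasing}), with $M(0^+)=+\infty$ and $M(+\infty)=0^+$ by Proposition \ref{prop:Mproperties}, so IVT gives a unique $\lambda^*$ with $M(\lambda^*)=L$ for every $L>0$.

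For part (\ref{item3:starstarstar}), the ordering $\lambda_*<\lambda_{**}$ is already part of (\ref{item:muunderstars}), so it remains to show $\lambda_{**}<\lambda^*$. The key input is the \emph{strict} inequality $g(\lambda)<M(\lambda)$ for every $\lambda>0$: by Proposition \ref{prop:properties1}(\ref{prop:item4}) the one-sided limit $\lim_{\alpha\to(1/(1+\lambda))^-}T'(\alpha;\lambda)$ is strictly negative, so $T(\cdot;\lambda)$ is strictly decreasing on a left neighborhood of $1/(1+\lambda)$, which prevents the supremum from being attained at the right endpoint and forces $M(\lambda)>T(1/(1+\lambda);\lambda)=g(\lambda)$. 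Applying this at $\lambda_{**}$ gives $M(\lambda_{**})>g(\lambda_{**})=L=M(\lambda^*)$, and the strict decrease of $M$ from (\ref{item1:strictlydecreasing}) yields $\lambda_{**}<\lambda^*$.

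The main obstacle I anticipate is being careful in part (\ref{item1:strictlydecreasing}) that the pointwise strict monotonicity of the integrand, which blows up at $z=1$, genuinely upgrades to strict monotonicity of the (improper) integral $T(\alpha;\lambda)$. This is essentially routine: monotonicity holds on all of $[0,1)$, which has positive measure, and the improper integral converges uniformly in $\lambda$ on compact subintervals by the dominated bound in Lemma \ref{lem:differentiable}, so strict inequality is preserved. Everything else is a direct combination of the already-proved structural properties of $T$, $g$, and $M$.
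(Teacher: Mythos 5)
Your argument is correct and essentially reproduces the paper's proof: part \eqref{item1:strictlydecreasing} comes from pointwise monotonicity of the time-map integrand in $\lambda$ together with the nesting $(0,1/(1+\lambda_2)]\subset(0,1/(1+\lambda_1)]$; parts \eqref{item:muunderstars} and \eqref{item:muoverstars} follow from the intermediate value theorem on the two monotone branches of $g$ and on the strictly decreasing $M$; and part \eqref{item3:starstarstar} uses the strict inequality $g(\lambda_{**})<M(\lambda_{**})$, which is exactly what Proposition~\ref{prop:properties1}\eqref{prop:item4} supplies, together with the strict monotonicity of $M$. Your derivation of the integrand's monotonicity from the first-integral formula for $|u'|$ is a slightly more transparent justification of a step the paper states by direct inspection of the kernel $K$ in its $z$-form, but it is the same underlying fact and the same overall route.
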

\begin{proof}
 \begin{enumerate}[\normalfont(i) ]
 \item By definition \eqref{eq:timemap},
\[
 T(\alpha;\lambda) = \sqrt{\frac{\alpha}{\lambda}} \int_{0}^{1} \frac{(1-\alpha z)(1-\alpha)-\lambda \alpha(1- z)}{\sqrt{1- z}\sqrt{2 (1-\alpha z)(1-\alpha) - \lambda \alpha(1- z) }}\ud z.
\]
For fixed values of $\alpha$ and $z$, the integrand is decreasing with respect to $\lambda$, which yields the desired result.

\item From parts \eqref{item:Rto0asmuto0} and \eqref{item:alphamax} of Proposition \ref{prop:Rproperties} and $0<L<L^*=g(\alpha)$, we have by the intermediate value theorem that there exists a $\lambda_*\in(0,\alpha)$ such that $g(\lambda_*)=L$. Furthermore, by part (\ref{item:alphamax}) of Proposition \ref{prop:Rproperties}, $\alpha$ is unique and $g(\alpha)$ is a maximum. This implies that $g$ is increasing on $(0,\alpha)$, and $\lambda_*$ is unique. 

Next, from part \eqref{item:Rto0asmutoinfty} of Proposition \ref{prop:Rproperties}, there exists an $N>0$, such that $\lambda\geq N$ implies  
$L> g(\lambda)$, and hence,  $g(\lambda) \ne L$ for $\lambda \geq N$. Furthermore, since $g$ is continuous on $[\alpha,N]$ and $g(N)<L<L^*$, we have by the intermediate value theorem that there exists a $\lambda_{**} \in (\alpha,N)$ such that $g(\lambda_{**})=L$. Again using part (\ref{item:alphamax}) of Proposition \ref{prop:Rproperties}, we find that $g$ is decreasing on $(\alpha,N]$ which implies that $\lambda_{**}$ is unique. 

By construction, $\lambda_*<\alpha<\lambda_{**}$, and claim \eqref{item:muunderstars} holds.

\item From Proposition \ref{prop:Mproperties}\eqref{Mprop:item2}, we know there exists $b>0$ such that $\lambda \geq b$ implies $M(\lambda) < L$. Similarly, from  Proposition \ref{prop:Mproperties}\eqref{Mprop:item3}, we know that there exists an $a>0$ (and $a<b$), such that $\lambda\leq a$ implies $M(\lambda) > L$. Thus, since $M$ is continuous on $[a,b]$, and $M(b) <L < M(a)$, by the intermediate value theorem we have that there exists an $\lambda^*\in [a,b] \subset (0,\infty)$ such that $M(\lambda^*)=L$. 

By part \eqref{item1:strictlydecreasing}, $M(\lambda)$ is strictly decreasing and thus $\lambda^*$ is unique.

\item From part \eqref{item:muunderstars}, $\lambda_{*} < \lambda_{**}$. Furthermore, $M(\lambda^*) = L = g(\lambda_{**})<M(\lambda_{**})$ where the last inequality is from the definition of $M$ and $g$\emdash \eqref{eq:timemapmaxvalue} and \eqref{eq:timemaprightendvalue}, respectively\emdash and  Proposition \ref{prop:properties1}\eqref{prop:item4}. Thus, since $M$ is strictly decreasing, $\lambda_{**}<\lambda^*$.
\end{enumerate}
\end{proof}

Now, we are setup to prove the Theorem \ref{thm:maintheorem} which characterizes the solution set of \eqref{eq:ode}.

\begin{proof}
 Let $T(\alpha;\lambda)$ be defined by \eqref{eq:timemap}. Then, by Theorems \ref{thm:equivalent} and  \ref{thm:equivalencethm}, finding solutions to \eqref{eq:ode} is equivalent to finding $\alpha\in(0,1/(1+\lambda)]$, $\lambda>0$, such that 
 \begin{equation}\label{eq:timemapsolnmt}
  T(\alpha;\lambda) = L.
 \end{equation} Specifically, $\alpha$ is a solution of $T(\alpha;\lambda)=L$ if and only if $\alpha=u(0;\lambda,L)$, where $u(\cdot\:;\lambda,L)$ is a solution of \eqref{eq:ode}, and  the numbers of solutions of \eqref{eq:ode} and \eqref{eq:timemapsolnmt} are the same. Therefore, we look at the solutions of \eqref{eq:timemapsolnmt} in the two situations: (\ref{mthm:case2}) $L < L^*$; (\ref{mthm:case1}) $L \geq L^*$. The following analysis is illustrated in Figure \ref{fig:twocases}.

\begin{enumerate}[\normalfont(i) ]
 \item Let $L <L^*$. By Lemma \ref{lemma:llbmt}, we know there exists constants $\lambda_*,\ \lambda_{**}$ and $\lambda^*$ such that $\lambda_* < \lambda_{**} < \lambda^*$ and $g(\lambda_*) = g(\lambda_{**})=M(\lambda^*)=L$. (a) If $0 < \lambda \leq \lambda_*$, then $\lambda< \lambda^*$ which, since $M$ is strictly decreasing implies $M(\lambda) > M(\lambda^*) =L$. Furthermore, by the proof of \eqref{item:muunderstars} of Lemma \ref{lemma:llbmt}, $g$ is increasing on $(0,\lambda_*]$ and $g(\lambda)\leq g(\lambda_*)$; hence, $g(\lambda) \leq L<M(\lambda)$, and by Proposition \ref{prop:properties1} and \ref{prop:onecritpt} there exists two solutions of $T(\alpha;\lambda)=L$. (b) If $\lambda_*<\lambda<\lambda_{**}$, then by Proposition \ref{prop:Rproperties} and Lemma \ref{lemma:llbmt}, $g(\lambda) \in (L,g(\alpha)]$. Thus, $g(\lambda)> L$ and $M(\lambda)> L$, and by Propositions \ref{prop:properties1} and \ref{prop:onecritpt} there exists one solution of $T(\alpha;\lambda)=L$. (c) If $\lambda_{**} \leq \lambda < \lambda^*$, then $M(\lambda) > M(\lambda^*) =L$; also by the proof of \eqref{item:muunderstars} of Lemma \ref{lemma:llbmt}, $g$ is decreasing on $[\lambda_{**},\lambda^*)$ which implies $g(\lambda) \leq g(\lambda_{**})= L$. Hence, $g(\lambda) \leq L<M(\lambda)$, and by Proposition \ref{prop:properties1} and \ref{prop:onecritpt} there exists two solutions of $T(\alpha;\lambda)=L$. (d) If $\lambda =\lambda^*$, then $M(\lambda) =L$, and there exists only one solution of $T(\alpha;\lambda)=L$. (e) If $\lambda >\lambda^*$, then $M(\lambda) < M(\lambda^*) = L$, and $T(\alpha;\lambda) \neq L$.
 
 \item Let $L \geq L^*$ which implies that $g(\lambda) \leq L$ for $\lambda >0$. Also, by Lemma \ref{lemma:llbmt}, we know there exists a constant $\lambda^*$ such that $M(\lambda^*)=L$. (a) If $0<\lambda < \lambda^*$, then $M$ is strictly decreasing which yields $M(\lambda)>M(\lambda^*)=L$. Thus, $g(\lambda) \leq L<M(\lambda)$, and we have by Proposition \ref{prop:properties1} and \ref{prop:onecritpt} that there exists two solutions of $T(\alpha;\lambda)=L$. (b) If $\lambda=\lambda^*$, then $M(\lambda) =L$, and there exists only one solution of $T(\alpha;\lambda)=L$. (c) If $\lambda >\lambda^*$, then $M(\lambda) < M(\lambda^*) = L$, and $T(\alpha;\lambda) \neq L$.
\end{enumerate}

Now to get the first bound on $\lambda^*$, we first let $(\lambda,u)$ be a solution pair of \eqref{eq:ode} and consider the eigenvalue problem 
 \begin{equation}\label{eq:eig_prob}
  -\left(\frac{\varphi'}{\sqrt{1+(u')^2}} \right)'  = \mu\,  \varphi,  \quad  -L<x<L; \qquad   \varphi(- L) = \varphi(L)=0,
\end{equation}
for $\varphi \in H_0^1[-L,L]$. After multiplying \eqref{eq:ode} and \eqref{eq:eig_prob} by $\varphi$ and $u$, respectively, and integrating  over $(-L,L)$, we obtain
\[
 \int_{-L}^L  \frac{\varphi' u'}{\sqrt{1+(u')^2}} \ud{x}= \int_{-L}^L\frac{\lambda \varphi}{(1-u)^2} \ud{x}, \qquad  \int_{-L}^L  \frac{\varphi' u'}{\sqrt{1+(u')^2}} \ud{x} = \int_{-L}^L\mu\,  u \, \varphi \ud{x},
\]
which yields the following solvability condition for $u$:
\[
0= \int_{-L}^L \varphi\left[\frac{\lambda }{(1-u)^2} - \mu\,  u \right]\ud{x},
\]
for all eigenpairs, $(\mu,\varphi)$, of problem \eqref{eq:eig_prob}. In particular, it holds for the first eigenvalue $\mu_1$ of problem \eqref{eq:eig_prob}, which is positive and simple, and its corresponding eigenvalue $\varphi_1$, which can---and will---be chosen to be strictly positive in $[-L,L]$. In choosing  $\varphi_1(x)>0$ for all $x$ in $[-L,L]$, the term $\lambda (1-u)^{-2}-\mu_1 u $ must be identically zero or change sign.  It is clear that it is not zero; hence, there must be a value of $u$, say $\hat{u}\in(0,1)$, where $\lambda (1-\hat{u})^{-2}= \mu_1 \hat{u}$. Since this expression must be true, we obtain $0<\lambda \leq  4\mu_1/27.$ 
However, using Rayleigh's formula, we also have
\[
 \begin{aligned}
 \mu_1 & = \inf \left\{ \int_{-L}^L \frac{|\varphi'|^2}{\sqrt{1+|u'|^2}} \  \mathrm{d}{x} : \varphi \in H_0^1[-L,L], \ \|\varphi\|_{2} = 1\right\} \\
 	& <  \inf \left\{ \int_{-L}^L |\varphi'|^2  \  \mathrm{d}{x}: \varphi \in H_0^1[-L,L], \ \|\varphi\|_{2} = 1\right\} \\
	& = \kappa_1,
 \end{aligned}
\]
where $\kappa_1 = \pi^2/(4L^2)$ is the first eigenvalue of $-\partial^2/\partial x^2$ on $[-L,L]$ with homogeneous Dirichlet boundary conditions and $\|\cdot\|_2$ is the standard $L^2$ norm on $[-L,L]$. Therefore, the value $\lambda \leq 4\mu_1/27< 4\kappa_1/27 = \pi^2/(27L^2)$.

To get the second bound  we note that if $u$ is a solution to \eqref{eq:ode}, then $(1-u)^{-2}>1$. Therefore, integrating the differential equation in \eqref{eq:ode} from $0$ to $x$ and using the aforementioned inequality gives
\[
  -\frac{u'(x)}{\sqrt{1+u'(x)^2}} > \lambda x, \qquad 0<x \leq L,
\]
where we have used that fact that $u'(0)=0$. From Lemma \ref{lem:nc1}, we know that $-u'(x) = |u'(x)|$; thus, 
\[
  \lambda x <  \frac{|u'(x)|}{\sqrt{1+|u'(x)|^2}} < 1,
\]
which upon taking $x \to L^-$ yields $ \lambda < L^{-1}$.

Hence, $\lambda^* < \min\{L^{-1},\pi^2/(27L^2)\}$.
\end{proof}

\begin{figure}[!h]
 \centering
 \subfigure[$L < L^*$]{\label{fig:case2}\includegraphics[width=.47\textwidth]{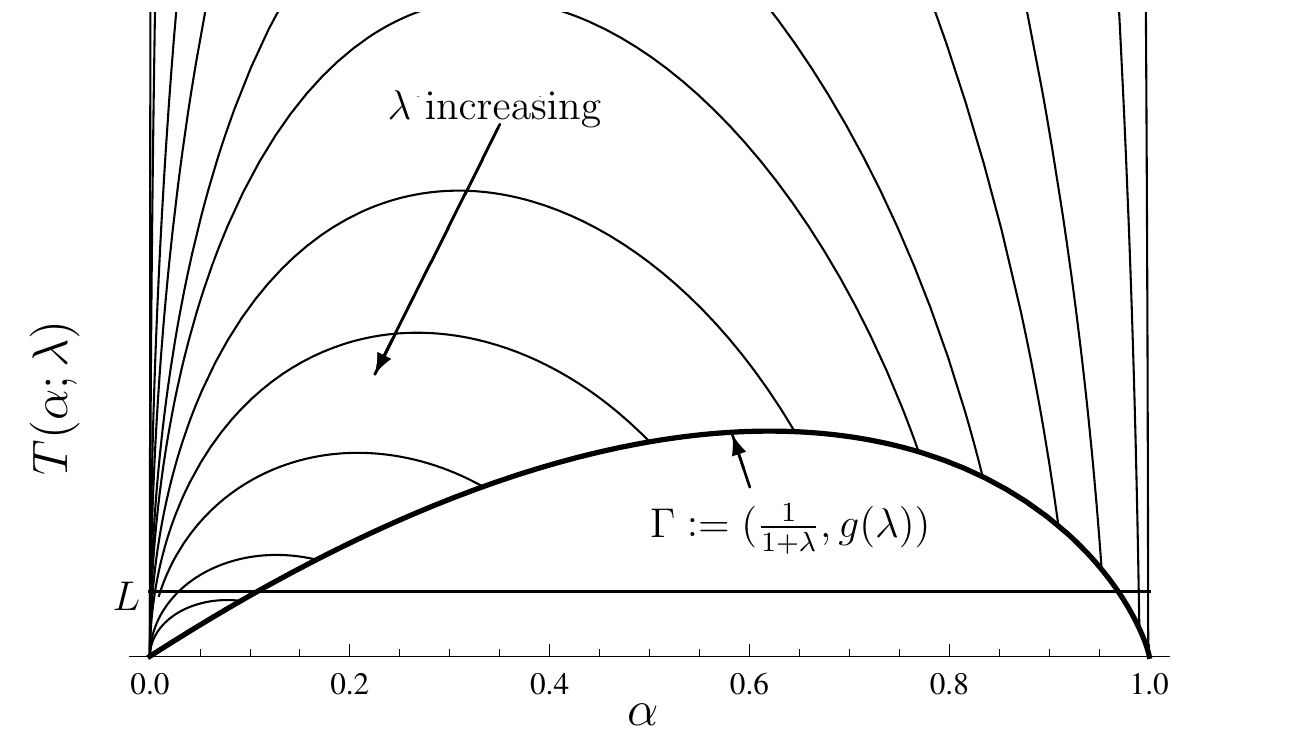}} \hspace{.04\textwidth} 
 \subfigure[$L \geq L^*$]{\label{fig:case1}\includegraphics[width=.47\textwidth]{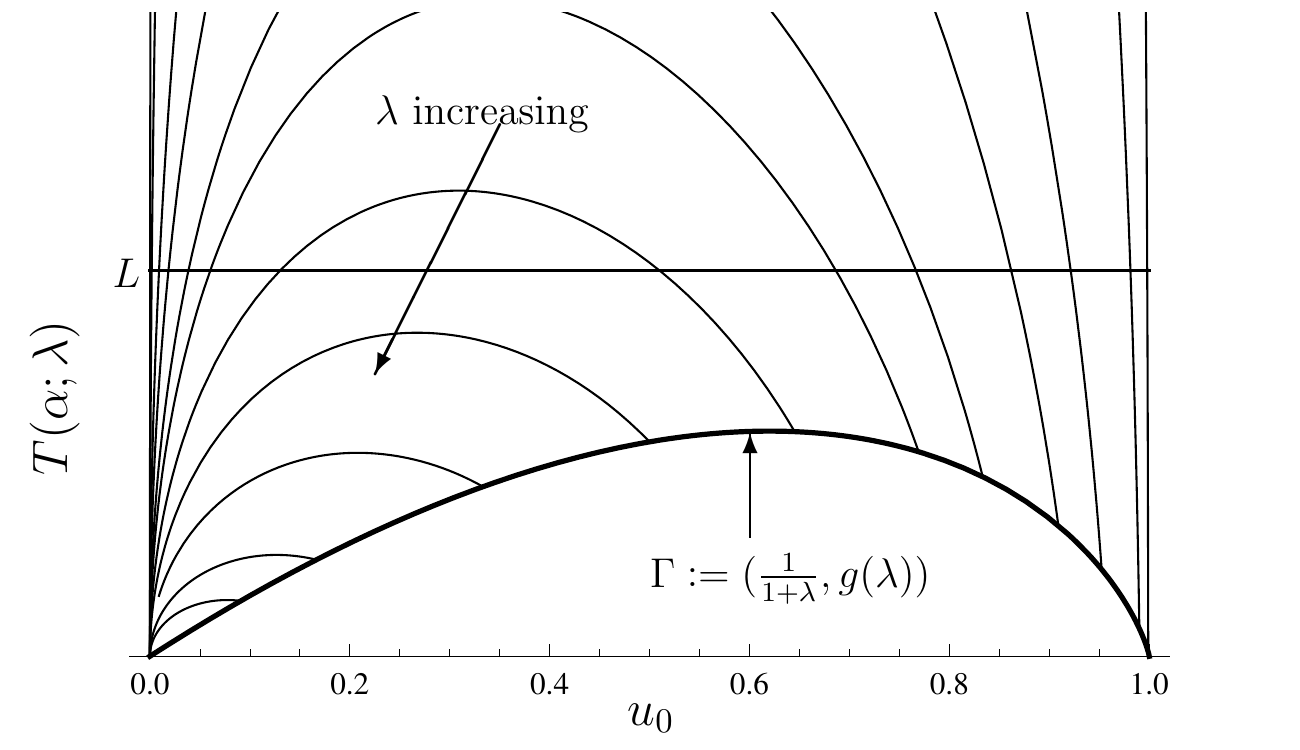}} 
 \caption{Graphical illustration of Theorem \ref{thm:maintheorem}}\label{fig:twocases}
\end{figure}

\section{Conclusion} 
In this work, we have analyzed the solution set of the one-dimensional prescribed mean curvature problem
\be\tag{\ref{eq:ode}}
\begin{aligned}
 & -\left(\frac{u'}{\sqrt{1+(u')^2}} \right)' = \frac{\lambda}{(1-u)^2}, \quad   -L<x<L; \qquad   u(-L)=u(L)= 0,
\end{aligned}
\ee
with $u <1$ in $[-L,L$], for positive $L$ and positive $\lambda$. In particular, we have shown that the solution set undergoes two bifurcations: a saddle node bifurcation and codimension 2 \emph{splitting} bifurcation, which depends on the parameters $L$ and $\lambda$. As a result of this analysis, we speculate that the latter, which is not present in the corresponding semilinear problem ($-\Delta u = \lambda^{-1}(1-u)^{-2}$ with the same boundary conditions), is due to the interplay of the mean curvature operator and finite-singularity forcing. This is because while other one-dimensional prescribed mean curvature equations have exhibited a disappearing solutions behavior (c.f. \cite{pan2009onedimensional}), none with a continuous forcing have exhibited this splitting bifurcation. With this in mind it would be desirable to fully characterize the solution set of the one-dimensional prescribed mean curvature equation with $f(u) = \lambda (1-u)^{-p}$, for $p>0$, and see if the splitting bifurcation generalizes to this class of problems.

\Appendix
\section{Bounding $H_i,$ for $i=1,2,3$}\label{sec:appendix}

By Proposition \ref{prop:nocomplexderivative} for $\alpha\in[a,1/(1+\lambda)]$, $a>0$, and $z\in(0,1)$, we deduce
\[
\begin{aligned}
   0 \leq  H_1(\alpha,z;\lambda) & = \frac{1}{2\sqrt{\lambda \alpha}}\frac{(1-\alpha z)(1-\alpha)-\lambda \alpha(1- z)}{\sqrt{1- z}\sqrt{2 (1-\alpha z)(1-\alpha) - \lambda \alpha(1- z) }} \\
	& \leq \frac{1}{2\sqrt{\lambda \alpha}}\frac{1}{\sqrt{1- z}\sqrt{2 (1-\alpha z)(1-\alpha) - \lambda \alpha(1- z) }},
\end{aligned}
\]
and
\[
\begin{aligned}
   0 \leq |H_2(\alpha,z;\lambda)| & = \sqrt{\frac{\alpha}{\lambda}}\frac{|1 +(1-2\alpha)z  +\lambda(1-z)|}{\sqrt{1- z}\sqrt{2 (1-\alpha z)(1-\alpha) - \lambda \alpha(1- z) }} \\
	& \leq  \sqrt{\frac{\alpha}{\lambda}}\frac{1 +|1-2\alpha|   +\lambda}{\sqrt{1- z}\sqrt{2 (1-\alpha z)(1-\alpha) - \lambda \alpha(1- z) }} \\
	& \leq  \sqrt{\frac{\alpha}{\lambda}}\frac{2   +\lambda}{\sqrt{1- z}\sqrt{2 (1-\alpha z)(1-\alpha) - \lambda \alpha(1- z) }}.
\end{aligned}
\]
Moreover from Proposition \ref{prop:nocomplexderivative},
\[
 2 (1-\alpha z)(1-\alpha) - \lambda \alpha(1- z) \geq \frac{\lambda}{1+\lambda}(1-\alpha z +\lambda \alpha z) \geq \frac{\lambda^2 \alpha }{1+\lambda}z >0,
\]
which implies 
\begin{equation}\label{eq:boundG1}
\begin{aligned}
   0 \leq H_1 & < \frac{\sqrt{1+\lambda}}{2 \lambda^{3/2} \alpha}\frac{1}{\sqrt{z}\sqrt{1- z}} < \frac{\sqrt{1+\lambda}}{2 \lambda^{3/2} a}\frac{1}{\sqrt{z}\sqrt{1- z}}
\end{aligned}
\ee
and
\begin{equation}\label{eq:boundG2}
\begin{aligned}
   0 \leq |H_2| & \leq  (2   +\lambda)\frac{\sqrt{1+\lambda}}{\lambda^{3/2} } \frac{1}{\sqrt{z}\sqrt{1- z}}.
\end{aligned}
\ee

Again,  from Proposition \ref{prop:nocomplexderivative} for $\alpha\in[a,1/(1+\lambda)]$, $a>0$, and $z\in(0,1)$, we have
\[
\begin{aligned}
   0 \leq |H_3(\alpha,z;\lambda)| & =\sqrt{\frac{\alpha}{\lambda}}\frac{((1-\alpha z)(1-\alpha)-\lambda \alpha(1- z))|2+\lambda(1-z) + (2-4\alpha)z|}{2\sqrt{1-z}\left(2 (1-\alpha z)(1-\alpha) - \lambda \alpha(1- z)\right)^{3/2}} \\
	& \leq \sqrt{\frac{\alpha}{\lambda}}\frac{2+\lambda + 2|1-2\alpha|}{2\sqrt{1-z}\left(2 (1-\alpha z)(1-\alpha) - \lambda \alpha(1- z)\right)^{3/2}}\\
	& \leq \sqrt{\frac{\alpha}{\lambda}}\frac{4+\lambda}{2\sqrt{1-z}\left(2 (1-\alpha z)(1-\alpha) - \lambda \alpha(1- z)\right)^{3/2}}.
\end{aligned}
\]
Also, from Proposition \ref{prop:nocomplexderivative},
\[
 2 (1-\alpha z)(1-\alpha) - \lambda \alpha(1- z) \geq \frac{\lambda}{1+\lambda}(1-\alpha z +\lambda \alpha z) \geq \frac{\lambda}{1+\lambda}\left(1- \alpha z\right) >0,
\]
which implies
\begin{equation}\label{eq:boundG3}
\begin{aligned}
   0\leq |H_3| & \leq \frac{\sqrt{\alpha}(1+\lambda)^{3/2}(4+\lambda)}{2\lambda^{2}}\frac{1}{\left(1- \alpha z\right)^{3/2}} \frac{1}{\sqrt{1-z}} \\
   	& \leq \frac{(1+\lambda)(4+\lambda)}{2\lambda^{2}}\frac{1}{\left(1-  (1+\lambda)^{-1}z\right)^{3/2}} \frac{1}{\sqrt{1-z}}.
\end{aligned}
\ee

Furthermore, for $\lambda>0$
\be\label{eq:boundAppend4}
 \int_0^1 \frac{1}{\sqrt{z}\sqrt{1- z}} \ud z = \pi, \quad   \int_0^1\frac{1}{\left(1-  (1+\lambda)^{-1}z\right)^{3/2}} \frac{1}{\sqrt{1-z}} \ud z = 2 \left(1 + \lambda^{-1}\right),
\ee
which means that $1/(\sqrt{z}\sqrt{1- z})$ and $(1- (1+\lambda)^{-1}z)^{-3/2}/\sqrt{1- z}$ are in $L^1(0,1)$.

\bibliographystyle{siam}	
\bibliography{global_ref}		

\begin{thebibliography}{10}

\bibitem{amster2003nonlinear}
{\sc P.~Amster and M.~C. Mariani}, {\em The prescribed mean curvature equation
  for nonparametric surfaces}, Nonlinear Anal., 52 (2003), pp.~1069--1077.

\bibitem{apostol1974mathematical}
{\sc T.~M. Apostol}, {\em Mathematical Analysis}, vol.~32, Addison-Wesley
  Reading, MA, 1974.

\bibitem{atkinson1989bounds}
{\sc F.~V. Atkinson and L.~A. Peletier}, {\em Bounds for vertical points of
  solutions of prescribed mean curvature type equations, {I}}, Proc. Roy. Soc.
  Edinburgh Sect. A, 112 (1989), pp.~15--32.

\bibitem{atkinson1988ground}
{\sc F.~V. Atkinson, L.~A. Peletier, and J.~Serrin}, {\em Ground states for the
  prescribed mean curvature equation: The supercritical case}, in Nonlinear
  Diffusion Equations and Their Equilibrium States I, W.-M. Ni, L.~Peletier,
  and J.~Serrin, eds., Math. Sci. Res. Inst. Publ., Springer, New York, 1988,
  pp.~51---74.

\bibitem{atkinson1992estimates}
\leavevmode\vrule height 2pt depth -1.6pt width 23pt, {\em Estimates for
  vertical points of solutions of prescribed mean curvature type equations,
  {II}}, Asymptotic Anal., 5 (1992), pp.~283--310.

\bibitem{benevieri2006periodic}
{\sc P.~Benevieri, J.~M. do~\`{O}, and E.~S. de~Medeiros}, {\em Periodic
  solutions for nonlinear systems with mean curvature-like operators},
  Nonlinear Anal., 65 (2006), pp.~1462--1475.

\bibitem{benevieri2007periodic}
\leavevmode\vrule height 2pt depth -1.6pt width 23pt, {\em Periodic solutions
  for nonlinear equations with mean curvature-like operators}, Appl. Math.
  Lett., 20 (2007), pp.~484--492.

\bibitem{bereanu2009radial}
{\sc C.~Bereanu, P.~Jebelean, and J.~Mawhin}, {\em Radial solutions for some
  nonlinear problems involving mean curvature operators in {E}uclidean and
  {M}inkowski spaces}, Proc. Amer. Math. Soc., 137 (2009), pp.~161--169.

\bibitem{bereanu2010radial}
\leavevmode\vrule height 2pt depth -1.6pt width 23pt, {\em Radial solutions for
  neumann problems involving mean curvature operators in {E}uclidean and
  {M}inkowski spaces}, Math. Nachr., 283 (2010).

\bibitem{bidaut1996rotationally}
{\sc M.-F. Bidaut-Veron}, {\em Rotationally symmetric hypersurfaces with
  prescribed mean curvature}, Pacific J. Math., 173 (1996), pp.~29--67.

\bibitem{bonheure2007classical}
{\sc D.~Bonheure, P.~Habets, F.~Obersnel, and P.~Omari}, {\em Classical and
  non-classical positive solutions of a prescribed curvature equation with
  singularities}, Rend. Istit. Mat. Univ. Trieste, 39 (2007), pp.~63--85.

\bibitem{bonheure2007classical2}
\leavevmode\vrule height 2pt depth -1.6pt width 23pt, {\em Classical and
  non-classical solutions of a prescribed curvature equation}, J. Differential
  Equations, 243 (2007), pp.~208--237.
\newblock Special Issue in Honor of Arrigo Cellina and Jim Yorke.

\bibitem{brubaker2011analysissingular}
{\sc N.~D. Brubaker and A.~E. Lindsay}, {\em Analysis of the singular solution
  branch of a prescribed mean curvature equation with singular nonlinearity},
  submitted SIAM J. Math. Anal.,  (2011).

\bibitem{brubaker2011nonlinear}
{\sc N.~D. Brubaker and J.~A. Pelesko}, {\em Non-linear effects on canonical
  {MEMS} models}, European J. Appl. Math., 22 (2011), pp.~455--470.
\newblock DOI: 10.1017/S0956792511000180.

\bibitem{burns2011steady}
{\sc M.~Burns and M.~Grinfeld}, {\em Steady state solutions of a bi-stable
  quasi-linear equation with saturating flux}, European J. Appl. Math., 22
  (2011), pp.~317--331.
\newblock doi:10.1017/S0956792511000076.

\bibitem{chang2006multiple}
{\sc K.-C. Chang and T.~Zhang}, {\em Multiple solutions of the prescribed mean
  curvature equation}, in Inspired by S. S. Chern: A Memorial Volume in Honor
  of A Great Mathematician, P.~A. Griffiths, ed., no.~11 in Nankai Tracts
  Math., World Sci. Publ., Hackensack, NJ, 2006, ch.~5, pp.~113--128.

\bibitem{clement1996modified}
{\sc P.~Cl\'{e}ment, R.~Man\'{a}sevich, and E.~Mitidieri}, {\em On a modified
  capillary equation}, J. Differential Equations, 124 (1996), pp.~343--358.

\bibitem{coffman1991prescribed}
{\sc C.~V. Coffman and W.~K. Ziemer}, {\em A prescribed mean curvature problem
  on domains without radial symmetry}, SIAM J. Math. Anal., 22 (1991),
  pp.~982--990.

\bibitem{concus1970class}
{\sc P.~Concus and R.~Finn}, {\em On a class of capillary surfaces}, J. Analyse
  Math, 23 (1970), pp.~65--70.

\bibitem{concus1974capillaryI}
\leavevmode\vrule height 2pt depth -1.6pt width 23pt, {\em On capillary free
  surfaces in a gravitational field}, Acta Math., 132 (1974), pp.~207--223.

\bibitem{concus1974capillaryII}
\leavevmode\vrule height 2pt depth -1.6pt width 23pt, {\em On capillary free
  surfaces in the absence of gravity}, Acta Math., 132 (1974), pp.~177--198.

\bibitem{concus1975singularI}
\leavevmode\vrule height 2pt depth -1.6pt width 23pt, {\em A singular solution
  of the capillary equation. {I. E}xistence.}, Invent. Math., 29 (1975),
  pp.~143--148.

\bibitem{concus1975singularII}
\leavevmode\vrule height 2pt depth -1.6pt width 23pt, {\em A singular solution
  of the capillary equation. {II. U}niqueness.}, Invent. Math., 29 (1975),
  pp.~149--159.

\bibitem{concus1976height}
\leavevmode\vrule height 2pt depth -1.6pt width 23pt, {\em On the height of a
  capillary surface}, Math. Z., 147 (1976), pp.~93--95.

\bibitem{concus1978shape}
\leavevmode\vrule height 2pt depth -1.6pt width 23pt, {\em The shape of a
  pendent liquid drop}, Philos. Trans. Roy. Soc. London Ser. A, 292 (1978),
  pp.~307---340.

\bibitem{conti2002existence}
{\sc M.~Conti and F.~Gazzola}, {\em Existence of ground states and
  free-boundary problems for the prescribed mean-curvature equation}, Adv.
  Differential Equations, 7 (2002), pp.~667--694.

\bibitem{delpino2007ground}
{\sc M.~del Pino and I.~Guerra}, {\em Ground states of a prescribed mean
  curvature equation}, J. Differential Equations, 241 (2007), pp.~112--126.

\bibitem{finn1986equilibrium}
{\sc R.~Finn}, {\em Equilibrium Capillary Surfaces}, Grundlehren der
  mathematischen Wissenschaften, Springer-Verlag, 1986.

\bibitem{finn2002eight}
\leavevmode\vrule height 2pt depth -1.6pt width 23pt, {\em Eight remarkable
  properties of capillary surfaces}, Math. Intelligencer, 24 (2002),
  pp.~21--33.

\bibitem{gauss1830principia}
{\sc C.~F. Gauss}, {\em Principia generalia theoriae figurae fluidorum in statu
  aequilibrii}, Dieterichs, 1830.

\bibitem{gilbarg1983elliptic}
{\sc D.~Gilbarg and N.~S. Trudinger}, {\em Elliptic Partial Differential
  Equations of Second Order}, no.~224 in Grundlehren der mathematischen
  Wissenschaften, Springer-Verlag, {S}econd~ed., 1983.

\bibitem{giusti1976boundary}
{\sc E.~Giusti}, {\em Boundary value problems for non-parametric surfaces of
  prescribed mean curvature}, Ann. Scuola Norm. Sup. Pisa Cl. Sci. (4), 3
  (1976), pp.~501--548.

\bibitem{giusti1978equation}
\leavevmode\vrule height 2pt depth -1.6pt width 23pt, {\em On the equation of
  surfaces of prescribed mean curvature. {E}xistence and uniqueness without
  boundary conditions}, Invent. Math, 46 (1978), pp.~111--137.

\bibitem{giusti1980generalized}
\leavevmode\vrule height 2pt depth -1.6pt width 23pt, {\em Generalized
  solutions for the mean curvature equation}, Pacific J. Math., 88 (1980),
  pp.~297--321.

\bibitem{giusti1981equilibrium}
\leavevmode\vrule height 2pt depth -1.6pt width 23pt, {\em The equilibrium
  configuration of liquid drops}, J. Reine Angew. Math., 321 (1981),
  pp.~53--63.

\bibitem{habets2004positive}
{\sc P.~Habets and P.~Omari}, {\em Positive solutions of an indefinite
  prescribed mean curvature problem on a general domain}, Adv. Nonlinear Stud.,
  4 (2004), pp.~1--13.

\bibitem{habets2007multiple}
{\sc P.~Habets and P.~Omari}, {\em Multiple positive solutions of a
  one-dimensional prescribed mean curvature problem}, Commun. Contemp. Math., 9
  (2007), pp.~701--730.

\bibitem{ishimura1990nonlinear}
{\sc N.~Ishimura}, {\em Nonlinear eigenvalue problem associated with the
  generalized capillarity equation}, J. Fac. Sci. Univ. Tokyo Sect. IA Math.,
  37 (1990), pp.~457--466.

\bibitem{ishimura1991generalized}
\leavevmode\vrule height 2pt depth -1.6pt width 23pt, {\em Generalized ground
  states for quasilinear elliptic equations}, J. Fac. Sci. Univ. Tokyo Sect. IA
  Math., 38 (1991), pp.~137--147.

\bibitem{kusahara2000barrier}
{\sc T.~Kusahara and H.~Usami}, {\em A barrier method for quasilinear ordinary
  differential equations of the curvature type}, Czechoslovak Math. J., 50
  (2000), pp.~185--196.
\newblock Doi: 10.1023/A:1022409808258.

\bibitem{kusano1990radial}
{\sc T.~Kusano and C.~A. Swanson}, {\em Radial entire solutions of a class of
  quasilinear elliptic equations}, J. Differential Equations, 83 (1990),
  pp.~379--399.

\bibitem{laplace1805traite}
{\sc P.-S. Laplace}, {\em Trait{\'e} de m{\'e}canique c{\'e}leste:
  suppl{\'e}ments au Livre X}, {\OE}uvres Compl{\`e}te Vol. 4,
  Guathiers--Villars, Paris, 1805.

\bibitem{le2005some}
{\sc V.~K. Le}, {\em Some existence results on nontrivial solutions of the
  prescribed mean curvature equation}, Adv. Nonlinear Stud., 5 (2005),
  pp.~133--161.

\bibitem{le2008sub}
\leavevmode\vrule height 2pt depth -1.6pt width 23pt, {\em On a
  sub-supersolution method for the prescribed mean curvature problem},
  Czechoslovak Math. J., 58 (2008), pp.~541--560.

\bibitem{li2010exact}
{\sc W.~Li and Z.~Liu}, {\em Exact number of solutions of a prescribed mean
  curvature equation}, J. Math. Anal. Appl., 367 (2010), pp.~486--498.

\bibitem{moulton2008theory}
{\sc D.~E. Moulton and J.~A. Pelesko}, {\em Theory and experiment for soap-film
  bridge in an electric field}, J. Colloid Interface Sci., 322 (2008),
  pp.~252--262.

\bibitem{moulton2009catenoid}
\leavevmode\vrule height 2pt depth -1.6pt width 23pt, {\em Catenoid in an
  electric field}, SIAM J. Appl. Math., 70 (2009), pp.~212--230.

\bibitem{nakao1990bifurcation}
{\sc M.~Nakao}, {\em A bifurcation problem for a quasi-linear elliptic boundary
  value problem}, Nonlinear Anal., 14 (1990), pp.~251--262.

\bibitem{ni1985nonexistence}
{\sc W.-M. Ni and J.~Serrin}, {\em Nonexistence theorems for quasilinear
  partial differential equations. proceedings of the conference commemorating
  the 1st centennial of the circolo matematico di palermo}, Rend. Circ. Mat.
  Palermo (2) Suppl.,  (1985), pp.~171---185.

\bibitem{ni1986existence}
{\sc W.-M. Ni and J.~Serrin}, {\em Existence and non-existence theorems for
  ground states of quasilinear partial differential equations. the anomalous
  case}, Acc. Naz. dei Lincei, Atti dei Convegni, 77 (1986), pp.~261--257.

\bibitem{ni1986nonexistence}
\leavevmode\vrule height 2pt depth -1.6pt width 23pt, {\em Nonexistence
  theorems for singular solutions of quasilinear partial differential
  equations}, Comm. Pure Appl. Math., 39 (1986), pp.~379--399.

\bibitem{noussair1993barrier}
{\sc E.~S. Noussair, C.~A. Swanson, and Y.~Jianfu}, {\em A barrier method for
  mean curvature problems}, Nonlinear Anal., 21 (1993), pp.~631--641.

\bibitem{obersnel2007classical}
{\sc F.~Obersnel}, {\em Classical and non-classical sign-changing solutions of
  a one-dimensional autonomous prescribed curvature equation}, Adv. Nonlinear
  Stud., 7 (2007), pp.~671--682.

\bibitem{obersnel2009existence}
{\sc F.~Obersnel and P.~Omari}, {\em Existence and multiplicity results for the
  prescribed mean curvature equation via lower and upper solutions},
  Differential Integral Equations, 22 (2009), pp.~853--880.

\bibitem{obersnel2010positive}
\leavevmode\vrule height 2pt depth -1.6pt width 23pt, {\em Positive solutions
  of the dirichlet problem for the prescribed mean curvature equation}, J.
  Differential Equations, 249 (2010), pp.~1674--1725.

\bibitem{osserman2002survey}
{\sc R.~Osserman}, {\em A Survey of Minimal Surfaces}, Dover, 2002.

\bibitem{pan2009onedimensional}
{\sc H.~Pan}, {\em One-dimensional prescribed mean curvature equation with
  exponential nonlinearity}, Nonlinear Anal., 70 (2009), pp.~999 -- 1010.

\bibitem{pan2011timemapII}
{\sc H.~Pan and R.~Xing}, {\em Time maps and exact multiplicity results for
  one-dimensional prescribed mean curvature equations}, Nonlinear Anal., 74
  (2011), pp.~1234--1260.

\bibitem{pan2011timemapI}
\leavevmode\vrule height 2pt depth -1.6pt width 23pt, {\em Time maps and exact
  multiplicity results for one-dimensional prescribed mean curvature equations.
  {II}}, Nonlinear Anal., 74 (2011), pp.~3751--3768.

\bibitem{pelesko2003modeling}
{\sc J.~A. Pelesko and D.~H. Bernstein}, {\em Modeling {MEMS} and {NEMS}},
  Chapman \& Hall/CRC, 2003.

\bibitem{peletier1987ground}
{\sc L.~A. Peletier and J.~Serrin}, {\em Ground states for a prescribed mean
  curvature equation}, Proc. Amer. Math. Soc., 100 (1987), pp.~694--700.

\bibitem{serrin1988positive}
{\sc J.~Serrin}, {\em Positive solutions of a prescribed mean curvature
  problem}, in Calculus of Variations and Partial Differential Equations,
  S.~Hildebrandt, D.~Kinderlehrer, and M.~Miranda, eds., vol.~1340 of Lecture
  Notes in Mathematics, Springer-Verlag, 1988, pp.~248--255.
\newblock 10.1007/BFb0082900.

\bibitem{smoller1980nondegenerate}
{\sc J.~Smoller, A.~Tromba, and A.~Wasserman}, {\em Nondegenerate solutions of
  boundary-value problems}, Nonlinear Anal., 4 (1980), pp.~207--216.

\bibitem{young1805essay}
{\sc T.~Young}, {\em An essay on the cohesion of fluids}, Philos. Trans. Roy.
  Soc. London, 95 (1805), pp.~65--87.

\end{thebibliography}

\end{document}